\newif\ificmlver
\newcommand{\titlename}{Multi-agent Performative Prediction with Greedy Deployment and Consensus Seeking Agents}
\title{\titlename}
\author{Qiang Li, Chung-Yiu Yau, Hoi-To Wai\thanks{Q.~Li, C.-Y.~Yau and H.-T.~Wai are with the Department of Systems Engineering and Engineering Management, The Chinese University of Hong Kong, Hong Kong SAR of China. Emails: \texttt{\{liqiang, cyyau, htwai\}@se.cuhk.edu.hk}}}
\newlength\figH
\newlength\figW
\pgfplotsset{compat=newest}
\def\Exa@space@setup{%
  \Exa@preskip=5cm plus 2cm minus 2cm
  \Exa@postskip=\Exa@preskip 
}
\title{\titlename} 
\date{\today}
\newtheoremstyle{exampstyle}
  {1.5\topsep} 
  {1.5\topsep} 
  {} 
  {} 
  {\bfseries} 
  {.} 
  {.5em} 
  {} 
\theoremstyle{exampstyle}
\newtheoremstyle{otherstyle}
  {1.5\topsep} 
  {1.5\topsep} 
  {\itshape} 
  {} 
  {\bfseries} 
  {.} 
  {.5em} 
  {} 
\theoremstyle{otherstyle}
\newtheorem{Example}{Example}
\newtheorem{Prop}{Proposition}
\newtheorem{Corollary}{Corollary}
\newtheorem{Assumption}{A\!\!}
\begin{document}

\maketitle

\begin{abstract}
We consider a scenario where multiple agents are learning a common decision vector from data which can be influenced by the agents' decisions. This leads to the problem of multi-agent performative prediction ({\aname}). In this paper, we formulate {\aname} as a decentralized optimization problem that minimizes a sum of loss functions, where each loss function is based on a distribution influenced by the local decision vector.
We first prove the necessary and sufficient condition for the {\aname} problem to admit a unique multi-agent performative stable (Multi-PS) solution. We show that enforcing consensus leads to a laxer condition for existence of Multi-PS solution with respect to the distributions' sensitivities, compared to the single agent case. Then, we study a decentralized extension to the greedy deployment scheme \citep{mendler2020}, called the {\bname} scheme. We show that {\bname} converges to the Multi-PS solution and analyze its non-asymptotic convergence rate. Numerical results 
validate our analysis. 
\end{abstract}

\section{Introduction}\vspace{-.2cm}
Traditional learning/prediction problems are often formulated with the assumption that data follows a \emph{static} distribution, from which a decision vector is sought by solving a corresponding optimization problem. While this assumption holds for `stationary' tasks such as image classification, many real world tasks are dynamical, involving data that could be influenced by decisions. In the latter case, the predictions are said to be \emph{performative}. Example scenarios include when users are \emph{strategic} \citep{Hardt, dong2018strategic, kleinberg2020classifiers} such as in training an E-mail spam classifier, where users (including spammers) can adapt to the classifier's rules to evade detection. 

A common way to model \emph{performative prediction} is via incorporating decision-dependent distribution in formulating the learning problem \citep{quinonero2008dataset}.
Since the pioneering work by \citet{perdomo2020performative}, there is a growing literature in analyzing the {performative prediction} problem, which studied the convergence of learning algorithms to stable point of performative prediction \citep{mendler2020,drusvyatskiy2020stochastic,brown2020performative,li2021state,wood2021online}, algorithms to find stationary solution of performative risk \citep{izzo2021learn,izzo2022learn,miller2021,ray2022decision}, update timescales of agent \citep{zrnic2021leads}, etc. 

Most of the existing works are focused on the single agent (learner) setting. In this paper, we concentrate on the multi-agent performative prediction ({\aname}) problem. Here, the agents are \emph{consensus-seeking} who seek a common decision vector that minimizes the sum of loss functions through communications on a graph/network. Such setup arises naturally where each agent acquires data from different subset/population of users, and they desire to seek a common decision vector for maximal generalization performance. For the example of training an E-mail spam classifier, each agent represents a regional server providing services to a subset of users. These users are in general different and may react to their serving agents differently, leading to heterogeneous and decision dependent data distributions. 
Notably, our setup differs from the recent works in \citep{narang2022multiplayer,piliouras2022multi} which consider a game theoretical setup of {\aname}; see Fig.~\ref{fig:multipfd} and \S\ref{sec:setup}. 

For the algorithmic model, we study a decentralized extension of the greedy deployment scheme \citep{mendler2020}, particularly we concentrate on the decentralized stochastic gradient (DSGD)  \citep{lian2017decentralized} algorithm with greedy deployment ({\bname}). Overall, the {\bname} scheme emulates a scenario where agents apply DSGD, a standard decentralized learning algorithm, while being \emph{agnostic} to the performative nature of the prediction problem. In particular, upon each iteration, the non-converged local decisions will be deployed directly. Then, agents optimize their decision using stochastic gradient constructed from data with distribution shifted by local decision. 

Tackling {\aname} with the {\bname} scheme yields a practical scenario of performative prediction in the multi-agent setting. We inquire the open questions: \emph{When will the {\aname} problem admit a stable and consensual solution? If so, how fast does it take for {\bname} to converge to such solution? Does the limited communication in {\bname} impair its convergence rate?}
We provide affirmative answers to the above. Our idea hinges on adopting the concept of \emph{performative stable} solution \citep{perdomo2020performative}, which is a fixed point solution resulted from the interplay between agent(s) and data that react to the agent(s)' decisions. In particular, we focus on analyzing the multi-agent performative stable (Multi-PS) solution in our problem. 

To our best knowledge, this paper provides the first study and analysis of {\aname} with consensus seeking agents via a practical {\bname} scheme. We highlight the following key contributions:
\begin{itemize}[leftmargin=*, itemsep=0.1mm, topsep=.25mm]
\item We provide a \emph{necessary and sufficient} condition on the sensitivity of decision dependent data distributions for the existence and uniqueness of the Multi-PS solution. An interesting finding is that the Multi-PS solution exists even if some of the performative prediction problems of individual agents are unstable. The consensus seeking behavior in {\aname} tends to \emph{stabilize} the problem.
\item We study the {\bname} scheme and analyze its convergence towards the Multi-PS solution. We first show that the scheme is convergent under the same sufficient condition for existence of Multi-PS solution. With an appropriate step size rule, in expectation, the squared distance between the Multi-PS solution and the iterates decays as ${\cal O}(1/t)$, and the squared consensus distance decays as ${\cal O}(1/t^2)$, where $t$ is the iteration number. 
Our fine-grained analysis also reveals that heterogeneous users, poorly connected graph may slow down the convergence of {\bname}, but only so in the transient, and the asymptotic rate has a linear speedup property. 
\item To validate our analysis, we conduct numerical experiments on synthetic data and real data. Particularly, we consider the Gaussian mean estimation on synthetic data to validate our theory, and the logistics regression problem on the {\tt spambase} dataset. 
\end{itemize}
The paper is organized as follows. \S\ref{sec:setup} introduces the {\aname} problem and {\bname} scheme. \S\ref{sec:main} presents the theoretical results on {\aname}, {\bname} together with a proof outline. Finally, \S\ref{sec:num} shows numerical experiments to validate our analysis. All proofs can be found in the appendix.\vspace{.05cm}
    
\textbf{Related Works.} As mentioned, the main ingredient of {\bname} scheme is the classical DSGD algorithm. The latter was introduced in \citep{sundhar2010distributed, bianchi2012convergence, sayed2014adaptation} and is recently popularized for decentralized learning. Of relevance to our work are \citep{lian2017decentralized} which demonstrated a {linear speed up} against centralized SGD, and \citep{pu2021sharp} with a refined analysis, also see \citep{kong2021consensus, yes_topology} on the effects of consensus steps and topology. We also mention recent advances to improve the efficiencies of DSGD in \citep{tang2018d, lan2020communication, koloskova2019decentralized}. Our analysis entails fresh challenges as the sought Multi-PS solution cannot be  described explicitly as an optimal solution to the {\aname} problem. 

Another line of relevant works pertains to (multi-agent) reinforcement learning whose formulation also entails a decision-dependent distribution. To this end, policy gradient algorithms are analyzed in \citep{zhang2020global, karimi2019non}, and they have been recently extended to the multi-agent setting \citep{zhang2018fully, chen2021communication}; also see the survey \citep{zhang2021multi}. Most of the above works showed convergence to a stationary point which may not be unique. This is in contrast to our analysis of {\bname} which converges to the unique Multi-PS solution.\vspace{-.3cm}

\section{Problem Setup}\vspace{-.3cm} \label{sec:setup}
Consider a scenario where there are $n$ agents connected on an undirected and connected graph $G = (V,E)$ such that $V = \{1,\ldots,n\}$, $E \subseteq V \times V$. Note that we include self-loops such that $(i,i) \in E$ for any $i \in V$. Each agent $i \in V$ draws samples from the $i$th population of users. The latter is characterized by the distribution ${\cal D}_i( \prm_i )$ supported on $\Zset \subseteq \RR^p$, $p \in \NN$ and parameterized by the agent's decision vector $\prm_i \in \RR^d$, $d \in \NN$. In other words, the $i$th population of users are only influenced by the $i$th agent's decision vector. Note that the populations of users can be \emph{heterogeneous} such that ${\cal D}_i( \prm ) \neq {\cal D}_j( \prm' )$, $i \neq j$, even if $\prm = \prm'$. 

The agents aim to find a \emph{common decision vector} $\prm \in \RR^d$ in a collaborative fashion that minimizes the average of local losses. Consider the \emph{multi-agent performative prediction (\aname) problem}:
\beq \label{eq:multipfd} \textstyle
\min_{ \prm_i \in \RR^d, \, i=1,\ldots,n }~\frac{1}{n} \sum_{i=1}^n \EE_{ Z_i \sim {\cal D}_i( \prm_i ) } \big[ \ell( \prm_i ; Z_i ) \big] ~~\text{s.t.}~~ \prm_i = \prm_j,~\forall~(i,j) \in E. 
\eeq 
Since $G$ is connected, the constraint $\prm_i = \prm_j$ for $(i,j) \in E$ enforces the decision to be in \emph{consensus} across the $n$ agents. 
In the above, $\ell( \prm_i ; Z_i )$ is the loss function of the fitness of the decision vector $\prm_i$ with respect to (w.r.t.) the sample $Z_i$. The expected value $\EE_{ Z_i \sim {\cal D}_i( \prm_i ) } [ \ell( \prm_i ; Z_i ) ]$ corresponds to the loss at agent $i$ w.r.t.~the samples from the $i$th population of users. 

\begin{Example}\label{example1}
We describe a strategic binary classification problem with linear utility for users to illustrate the application of \eqref{eq:multipfd}. The sample $Z_i$ is defined by a tuple $Z_i = ( {\bm X}_i, Y_i ) \in \RR^d \times \{ 0, 1 \}$ of feature and binary label, and the loss function is taken as the logistic regression function: 
\beq \textstyle \label{eq:log_loss}
\ell( \prm ; Z_i ) = \log\big( 1 + \exp( \pscal{ {\bm X}_i }{ \prm } ) \big) - Y \pscal{ {\bm X}_i }{ \prm } + \frac{\beta}{2} \| \prm \|^2,
\eeq 
where $\beta > 0$ is a regularization parameter.
The above loss function quantifies the mismatches between the classifier $\prm$ and the given data tuple $Z_i = ( {\bm X}_i, Y_i )$.

On the other hand, the distribution ${\cal D}_i(\prm_i)$ is controlled by the $i$th population of users. The distribution is \emph{decision dependent} such that the sample $Z_i \sim {\cal D}_i( \prm_i )$ depends on the $i$th decision $\prm_i$. Observing the $i$th decision $\prm_i$, users of the $i$th population provides samples that are modified via a linear utility such that $Z_i = ({\bm X}_i , Y_i) \sim {\cal D}_i (\prm_i)$ is given by
\beq \textstyle \label{eq:perf_quad}
{\bm X}_i = \argmax_{ \hat{\bm X} \in \RR^d } \big\{ \pscal{ \prm_i }{ \hat{\bm X} } - \frac{1}{2\epsilon_i} \| \hat{\bm X} - {\bm X} \|^2 \big\},~Y_i = Y~~\text{with}~~( {\bm X}, Y) \sim {\cal D}_i^{\sf o},
\eeq 
for some $\epsilon_i > 0$, 
where ${\cal D}_i^{\sf o}$ is a base data distribution of the $i$th population. Note that in the above, we have the closed form solution ${\bm X}_i = {\bm X} + \epsilon_i \prm_i$.
Tackling \eqref{eq:multipfd} leads to a common classifier $\Bprm$ which takes the effects of the heterogeneous decision dependent distributions into account. 

\end{Example}

The \aname~problem \eqref{eq:multipfd} comprises of a stochastic objective function with a decision dependent distribution. In particular, for each agent $i$, the distribution ${\cal D}_i( \prm_i )$ captures a feedback mechanism where users of the $i$th population react to the decision made by the $i$th agent. See Fig.~\ref{fig:multipfd} (left) for an illustration. Due to non-convexity, the performative risk in \eqref{eq:multipfd} can be difficult to minimize. 
In this paper, we are interested in the \emph{multi-agent performative stable} (Multi-PS) solution:
\beq  \label{eq:thps}
\textstyle \thps = {\cal M}( \thps) \eqdef \argmin_{\prm \in \RR^d} \frac{1}{n}\sum_{i=1}^{n}\EE_{ Z_i \sim {\cal D}_i(\thps)} [\ell(\prm ; Z_i )]
\eeq 
which approximately solves \eqref{eq:multipfd}. Notice that $\thps$ is defined to be a fixed point of the map ${\cal M}: \RR^d \to \RR^d$. The existence and uniqueness of $\thps$ will be shown under mild condition in Proposition~\ref{lem:exist}. 

\begin{figure}[t] 
    \centering
    \includegraphics[width=.31\linewidth]{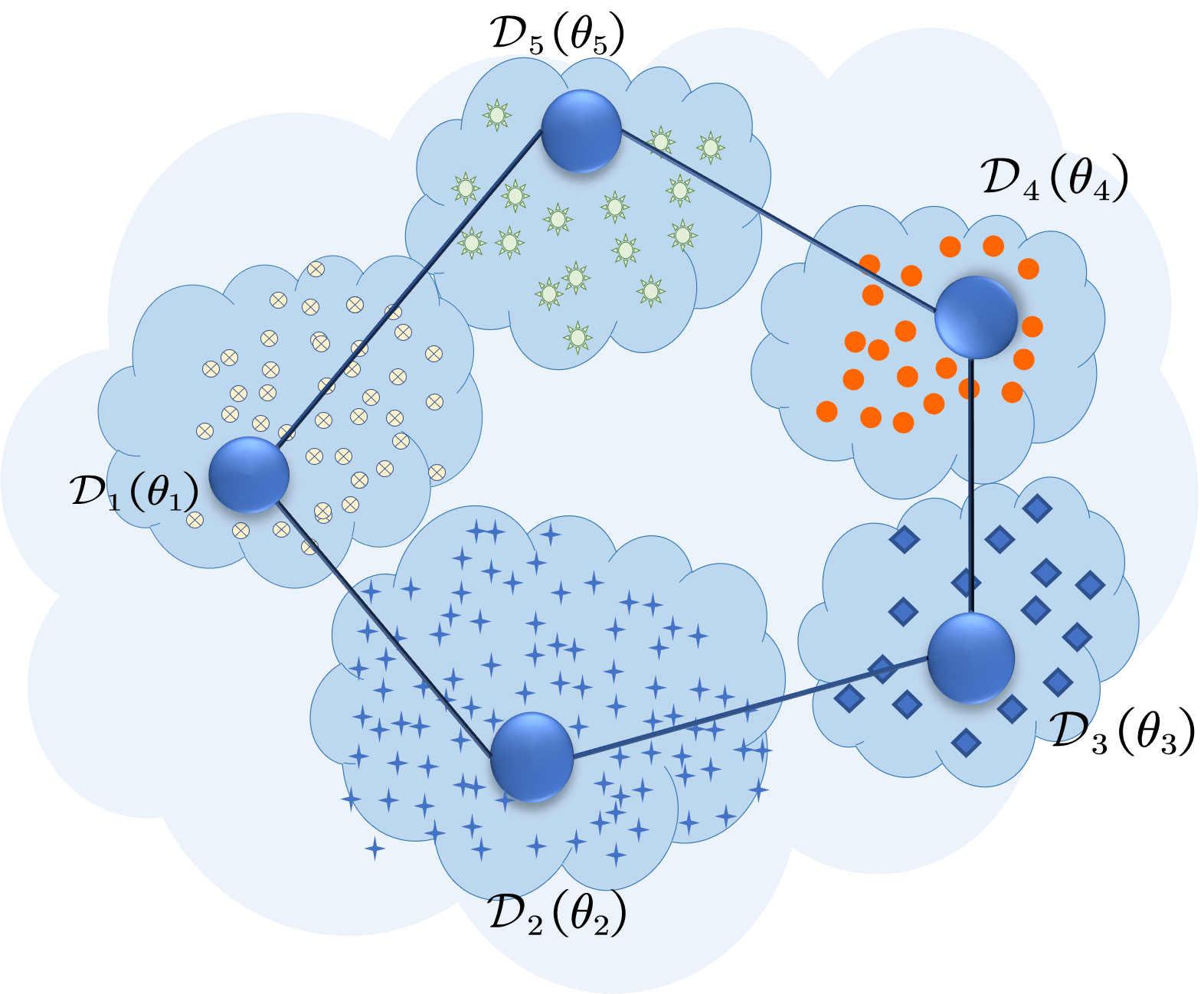}
    ~~
    \includegraphics[width=.3\linewidth]{./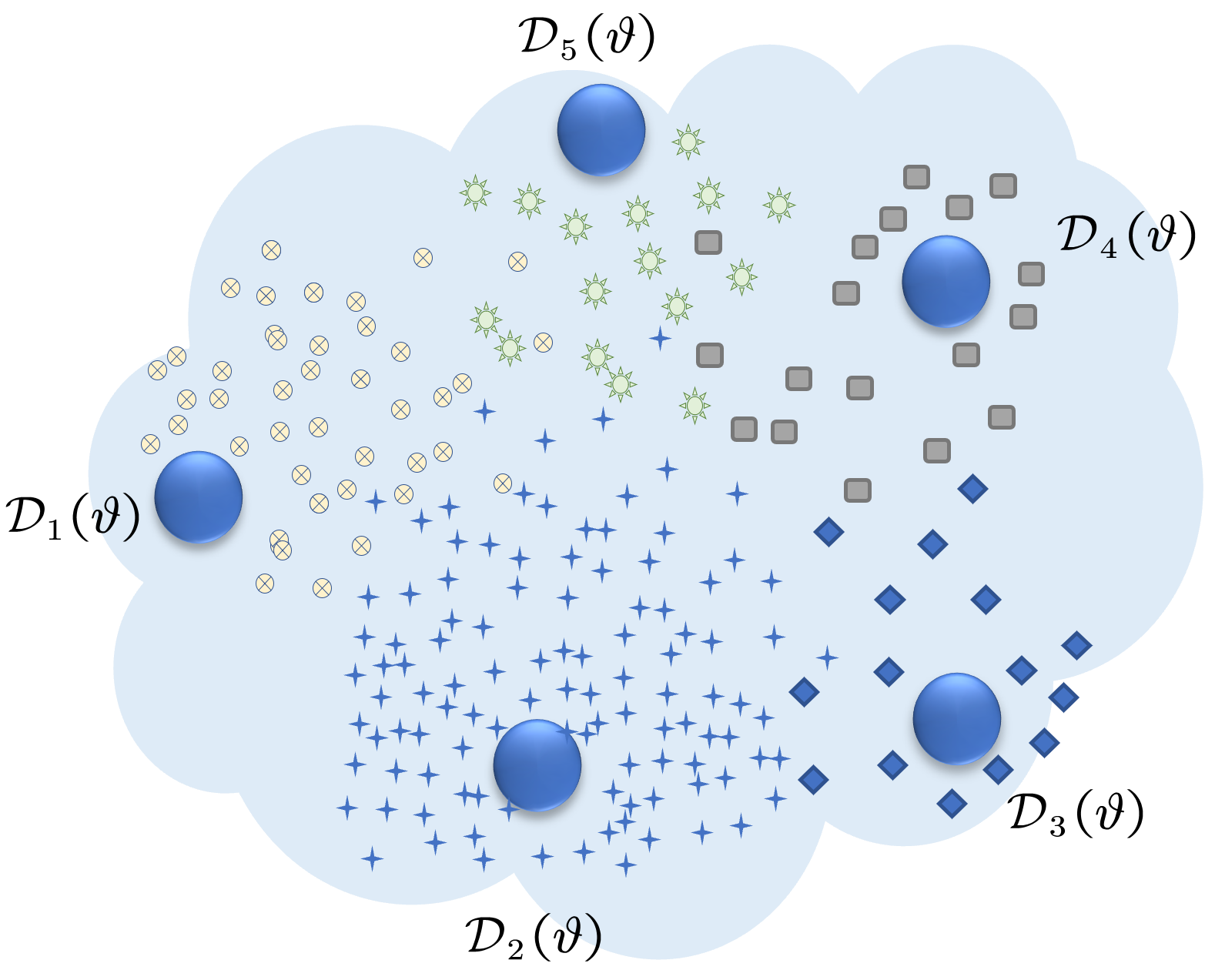}
    ~~
    \includegraphics[width=.3\linewidth]{./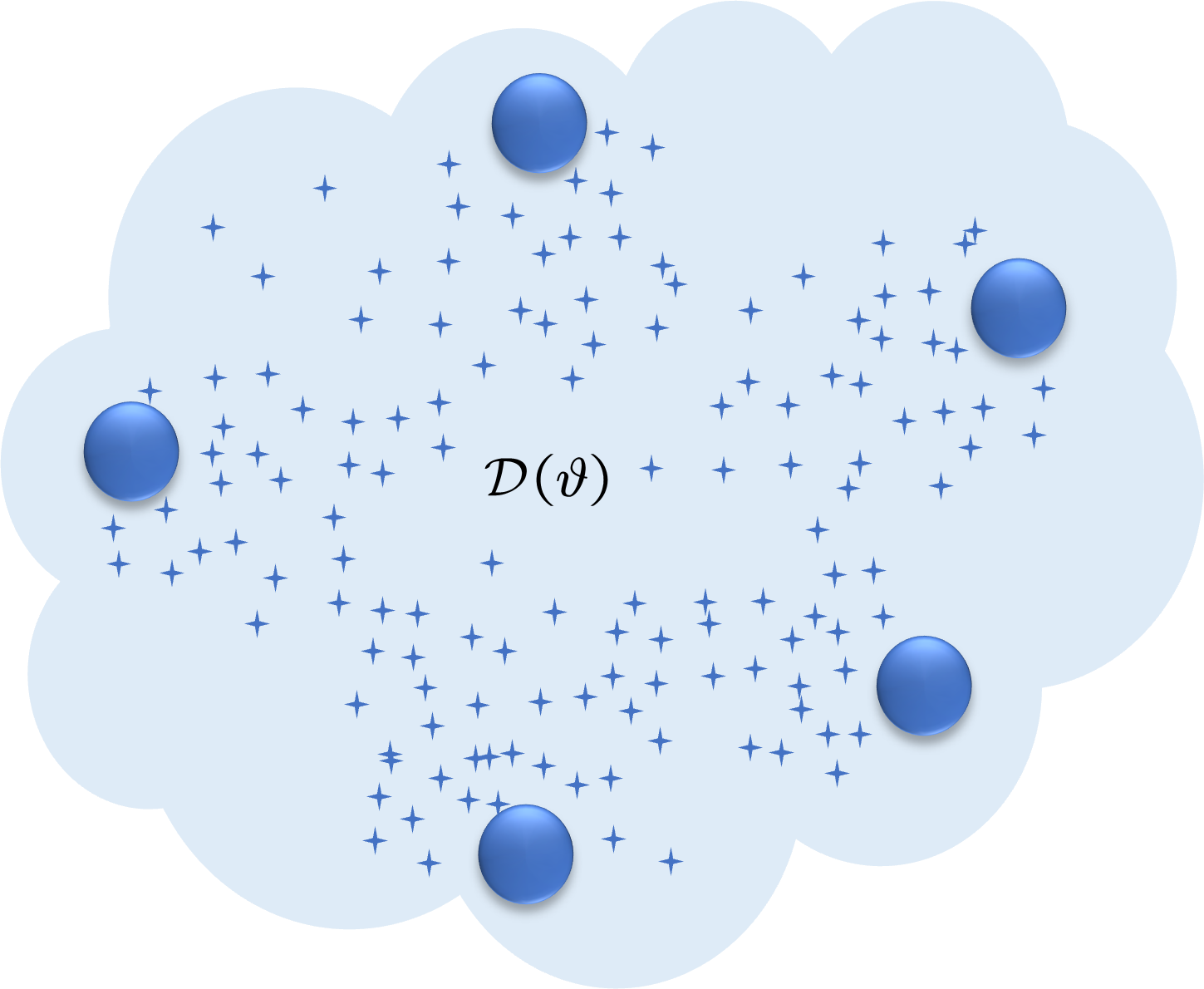}
    \caption{\textbf{Comparing the frameworks of multi-agent/player performative prediction}. (Left) {\color{blue} This work}: agent deploys $\prm_i$ and local distribution ${\cal D}_i( \prm_i )$ is affected by $\prm_i$ only; the agents communicate while aiming to reach consensus $\prm_1=\cdots=\prm_n$ eventually. (Middle) \citet{narang2022multiplayer}: agent deploys $\prm_i$ but local distribution is affected by joint decision ${\cal D}_i(\prm_1,\ldots,\prm_n)$. (Right) \citet{piliouras2022multi}: similar to {\protect\hypersetup{citecolor=.}\citep{narang2022multiplayer}} but the distribution is global ${\cal D}(\prm_1,\ldots,\prm_n)$.}\vspace{-.2cm} \label{fig:multipfd}
\end{figure}

\textbf{Comparison to Existing Works.} Our setup differs from recent works in \citep{piliouras2022multi,narang2022multiplayer}. Unlike ours, both works considered a game theoretical setting where the agents do not directly communicate their decisions. 
Instead, agents are coupled through the data distributions which depend on the global decision profile $\vartheta \eqdef (\prm_1, \ldots, \prm_n)$. 

In \citep{narang2022multiplayer}, agent/player $i$ seeks to solve $\min_{ \prm_i } \EE_{Z_i\sim {\cal D}_{i}(\vartheta)} [ \ell_i(\vartheta; Z_i) ]$ where the local distribution ${\cal D}_i$ depends on the global decision $\vartheta$ [Fig.~\ref{fig:multipfd} (middle)]; while \citet{piliouras2022multi} study a slightly different setting where the goal of agent/player $i$ is to solve $\min_{ \prm_i } \EE_{Z_i\sim {\cal D}(\vartheta)} [ \ell(\prm_i; Z_i) ]$ [Fig.~\ref{fig:multipfd} (right)]. Both works showed convergence to a stable equilibria via (stochastic) gradient-type updates. 
In contrast, our work considers cooperative agents who are willing to share their decisions with other agents. Under this setting, we show that a natural decentralized scheme achieves convergence to a consensual decision that is performatively stable.\vspace{.1cm}

\textbf{Decentralized Scheme.} 
To find an approximate solution to \eqref{eq:multipfd} in a cooperative fashion, the natural approach is to deploy a decentralized scheme where agents communicate limited amount of information with neighbors that are directly connected on $G$. 
Furthermore, we focus on a scenario where agents are \emph{agnostic} to that their users may react to the agents' decisions. 
The above motivates us to study the following variant of the standard DSGD algorithm \citep{lian2017decentralized}. 

Let us define a mixing matrix ${\bm W} \in \RR_+^{n \times n}$ on $G$ such that $W_{ij} = W_{ji} > 0$ if $(i,j) \in E$, otherwise $W_{ij} = 0$ if $(i,j) \notin E$, satisfying $\sum_{i=1}^n W_{ij} = 1$ for any $i=1, \ldots, n$. We consider the scheme:

\begin{center}
\fbox{\begin{minipage}{.975\linewidth}
\begin{center}\underline{\textbf{DSGD with Greedy Deployment (\bname) Scheme}}\end{center}
At iteration $t = 0, 1, \ldots$, for any $i \in V$, agent $i$ updates his/her decision ($\prm_i^t$) by the recursion consisting of two phases
\beq \textstyle \label{eq:dsgd}
{\sf (Phase~1)}~~
Z_i^{t+1} \sim {\cal D}_i(\prm_i^t)
~~
{\sf \Big|}
~~
{\sf (Phase~2)}~~
\prm_i^{t+1} = \sum_{j=1}^n W_{ij} \prm_j^t - \gamma_{t+1} \grd \ell( \prm_i^t ; Z_i^{t+1} ),
\eeq 
where $\gamma_{t+1} > 0$ is a step size. 
Note that $\grd \ell( \prm_i^t ; Z_i^{t+1} )$ denotes the gradient taken w.r.t.~the first argument $\prm_i^t$, and the samples $Z_i^{t+1}$ at each agent are independent of each other.
\end{minipage}}
\end{center}

The above combines the DSGD algorithm \citep{lian2017decentralized} with greedy deployment \citep{mendler2020}. 
For iteration $t \geq 0$, the scheme can be described with two phases: 

In \textsf{Phase 1}, agent $i$ deploys $\prm_i^t$ at the $i$th population, whose users react to the decision and reveal a sample $Z_i^{t+1} \sim {\cal D}_i( \prm_i^t )$ to the agent. Note that this is a greedy deployment scheme since the deployed decision $\prm_i^t$ may not be stable w.r.t.~\eqref{eq:multipfd}. In \textsf{Phase 2}, agent $i$ receives the current decisions $\prm_j^t$ from his/her neighbors $j \in {\cal N}_i$ and update $\prm_i^{t+1}$ according to the `consensus' + `stochastic gradient' step. Here, the `stochastic gradient' step is based on the local decision $\prm_i^t$ and sample $Z_i^{t+1}$ taken in phase one with greedy deployment. We remark that throughout the {\bname} scheme, the agents remain unaware of the performative behavior of their users. 

Notice that the {\bname} scheme can be interpreted as a DSGD algorithm deploying \emph{biased} stochastic gradient updates. Denote $\EE_t[\cdot]$ as the conditional expectation up to iteration $t$, we observe 
\beq 
\EE_t [ \grd_{\prm_i^t} \ell( \prm_i^t ; Z_i^{t+1} ) ] = \EE_{Z_i \sim {\cal D}_i( \prm_i^t) } [ \grd_{\prm_i^t} \ell( \prm_i^t ; Z_i ) ] \neq \grd_{\prm_i^t} \big\{ \EE_{ Z_i \sim {\cal D}_i( \prm_i^t ) } [ \ell( \prm_i^t ; Z_i ) ] \big\}.
\eeq 
The unbiased gradient on the r.h.s.~also involves derivative w.r.t.~the decision in the distribution. Analyzing the convergence of {\bname} therefore requires different techniques.\vspace{-.2cm}

\section{Main Results}\vspace{-.2cm} \label{sec:main}
This section studies the convergence of the {\bname} scheme and demonstrates that the latter can approximately solve the {\aname} problem \eqref{eq:multipfd}.
To facilitate our discussions, we first define:
\beq \textstyle \label{eq:perfrisk}
f_i( \prm ; \Bprm ) := \EE_{ Z_i \sim {\cal D}_i( \Bprm) } [ \ell( \prm; Z_i ) ],~~f( \prm; \Bprm ) := \frac{1}{n} \sum_{i=1}^n f_i( \prm ; \Bprm ).
\eeq 
Note that the first arguments in $f_i, f$ denote the agent's decision and the second argument is the deployed decision known to the population of users. For the rest of this paper, unless otherwise specified, $\grd \ell( \prm; Z )$, $\grd f_i( \prm; \prm' ), \grd f( \prm; \prm' )$ denote the gradients taken w.r.t.~the first argument $\prm$. 

Using the above notations, \eqref{eq:multipfd} is equivalent to $\min_{\prm} f( \prm; \prm )$.
We consider the set of assumptions:
\begin{Assumption}\label{ass: strongcvx}
Fix any $\bar{\prm} \in \RR^d$, the function $f( \prm; \bar{\prm} )$ is $\mu$-strongly convex in $\prm$ such that
\beq
f( \prm^\prime ; \bar{\prm} ) \geq f( \prm ; \bar{\prm} ) + \pscal{ \grd f( \prm; \bar{\prm} ) }{ \prm^\prime - \prm } + (\mu/2) \| \prm^\prime - \prm \|^2,~\forall~\prm^\prime, \prm \in \RR^d.
\eeq
\end{Assumption}

\begin{Assumption}\label{ass: smooth}
For any $i = 1, \ldots, n$, the loss function $\ell(\prm;z)$ is $L$-smooth such that
\beq
\norm{\grd \ell(\prm; z)-\grd \ell(\prm^\prime; z^\prime)} \leq L\{\norm{\prm - \prm^\prime} + \norm{z-z^\prime}\},~\forall~\prm^\prime, \prm \in \RR^d,  z, z^\prime \in \Zset.
\eeq
\end{Assumption}

\begin{Assumption}\label{ass:sensitive}
For any $i=1,\ldots,n$, there exists a constant $\epsilon_i>0$ such that
\beq 
{\cal W}_{1}({\cal D}_i(\prm), {\cal D}_i(\prm^\prime)) \leq \epsilon_i \norm{\prm-\prm^\prime},~\forall~\prm^\prime, \prm \in \RR^d,
\eeq
where ${\cal W}_{1} ( {\cal D}, {\cal D}' )$ denotes the Wasserstein-1 distance between the distributions ${\cal D}, {\cal D}'$.
\end{Assumption}
A\ref{ass: strongcvx}, A\ref{ass: smooth} require the loss functions to be strongly convex and smooth, while A\ref{ass:sensitive} states that the amount of distribution shift caused by the reaction of $i$th population to the agent's decision grows linearly with difference in decision. These assumptions are standard in the literature, e.g., \citep{perdomo2020performative, mendler2020, drusvyatskiy2020stochastic}.
To simplify notations, we define the average, maximum sensitivity as $\epsilon_{\sf avg} := \sum_{i=1}^n \epsilon_i / n$, $\epsilon_{\sf max} := \max_{i=1,\ldots,n} \epsilon_i$, respectively. 

Our first result establishes the existence of the Multi-PS solution $\thps$ satisfying $\grd f( \thps; \thps ) = {\bm 0}$:
\begin{Prop}[\bf Existence and Uniqueness of $\thps$]\label{lem:exist} Under A\ref{ass: strongcvx}--A\ref{ass:sensitive}. Define the map ${\cal M}: \RR^d \to \RR^d$
\beq \label{eq:map_M} \textstyle 
    \mathcal{M}(\prm) = \argmin_{\prm^\prime \in \RR^d} \frac{1}{n} \sum_{i=1}^{n} f_{i}(\prm^\prime; \prm) 
\eeq
If $\epsilon_{\sf avg} < \mu / L$, then the map ${\cal M}(\prm)$ is a contraction with the unique fixed point $\thps = {\cal M}( \thps )$. 
If $\epsilon_{\sf avg} \geq \mu / L$, then there exists an instance of \eqref{eq:map_M} where $\lim_{T \to \infty} \norm{ {\cal M}^T(\prm) } = \infty$.
\end{Prop}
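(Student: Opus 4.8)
The plan is to treat the two regimes separately: the sufficiency direction by a contraction estimate on $\mathcal M$ in the spirit of the single-agent argument of \citet{perdomo2020performative}, and the necessity direction by exhibiting an explicit scalar affine instance whose iterates diverge.

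\emph{Sufficiency.} First observe that for each fixed $\prm$ the map's objective $\frac1n\sum_{i=1}^n f_i(\cdot;\prm)=f(\cdot;\prm)$ is $\mu$-strongly convex by A\ref{ass: strongcvx}, so $\mathcal M(\prm)$ is well defined and characterized by the stationarity condition $\grd f(\mathcal M(\prm);\prm)={\bm 0}$. Fix $\prm,\prm'$ and write $a=\mathcal M(\prm)$, $b=\mathcal M(\prm')$. Strong monotonicity of $\grd f(\cdot;\prm)$ together with $\grd f(a;\prm)={\bm 0}=\grd f(b;\prm')$ gives, after one Cauchy--Schwarz step, $\mu\,\|a-b\|\le\|\grd f(b;\prm)-\grd f(b;\prm')\|$. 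It remains to bound the right-hand side. Writing $\grd f_i(b;\prm)-\grd f_i(b;\prm')=\EE_{Z\sim\mathcal D_i(\prm)}[\grd\ell(b;Z)]-\EE_{Z\sim\mathcal D_i(\prm')}[\grd\ell(b;Z)]$ and noting that $z\mapsto\grd\ell(b;z)$ is $L$-Lipschitz by A\ref{ass: smooth}, I would test against an arbitrary unit vector, apply Kantorovich--Rubinstein duality, and then invoke A\ref{ass:sensitive} to get $\|\grd f_i(b;\prm)-\grd f_i(b;\prm')\|\le L\,\mathcal W_1(\mathcal D_i(\prm),\mathcal D_i(\prm'))\le L\epsilon_i\|\prm-\prm'\|$. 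Averaging over $i$ gives $\|\mathcal M(\prm)-\mathcal M(\prm')\|\le\tfrac{L\epsilon_{\sf avg}}{\mu}\|\prm-\prm'\|$. When $\epsilon_{\sf avg}<\mu/L$ this is a strict contraction on the complete space $\RR^d$, so Banach's fixed-point theorem yields the unique $\thps=\mathcal M(\thps)$, and the stationarity condition for $\mathcal M(\thps)$ then gives $\grd f(\thps;\thps)={\bm 0}$.

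\emph{Necessity (tightness of the threshold).} To show $\mu/L$ cannot be relaxed, I would build a scalar instance ($d=p=1$). Recalling that A\ref{ass: strongcvx}--A\ref{ass: smooth} force $\mu\le L$ (an expectation of $L$-smooth functions cannot be more than $L$-smooth, yet is $\mu$-strongly convex), take $\ell(\prm;z)=\tfrac{\mu}{2}\prm^2-Lz\prm+c\prm$ with a constant $c\ge 0$: its gradient $\mu\prm-Lz+c$ is $L$-Lipschitz jointly in $(\prm,z)$, so A\ref{ass: smooth} holds, and the induced $f(\cdot;\bar\prm)$ is $\mu$-strongly convex, so A\ref{ass: strongcvx} holds. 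Let $\mathcal D_i(\prm)$ be a location family centered at $\epsilon_i\prm$ (e.g.\ $\mathcal D_i(\prm)=\mathcal N(\epsilon_i\prm,1)$), so $\mathcal W_1(\mathcal D_i(\prm),\mathcal D_i(\prm'))=\epsilon_i|\prm-\prm'|$ and A\ref{ass:sensitive} holds with equality. A direct computation gives $\frac1n\sum_i f_i(\prm';\prm)=\tfrac{\mu}{2}(\prm')^2-L\epsilon_{\sf avg}\prm\prm'+c\prm'$, hence $\mathcal M(\prm)=\tfrac{L\epsilon_{\sf avg}}{\mu}\prm-\tfrac{c}{\mu}$, an affine map with multiplier $L\epsilon_{\sf avg}/\mu\ge 1$. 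If $\epsilon_{\sf avg}>\mu/L$, already $c=0$ suffices: $\|\mathcal M^T(\prm)\|=(L\epsilon_{\sf avg}/\mu)^T|\prm|\to\infty$ for every $\prm$ other than the (repelling) fixed point $0$. If $\epsilon_{\sf avg}=\mu/L$, take $c>0$: then $\mathcal M$ is a nonzero translation and $\mathcal M^T(\prm)=\prm-Tc/\mu\to-\infty$. Either way the orbit is unbounded, giving the claimed instance.

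\emph{Main obstacle.} Essentially everything is bookkeeping except the single step where A\ref{ass:sensitive} — stated for the scalar Wasserstein-1 distance — must be converted into a bound on the Euclidean norm of a \emph{vector-valued} expectation of $\grd\ell$; the fix is standard (for any unit $u$, $z\mapsto\langle u,\grd\ell(b;z)\rangle$ inherits the $L$-Lipschitz constant from A\ref{ass: smooth}, and Kantorovich--Rubinstein applies to each such scalar test function), but it is the only non-routine point. I would also emphasize that the multi-agent structure enters only through the final averaging $\frac1n\sum_i\epsilon_i=\epsilon_{\sf avg}$: this is exactly why consensus ``stabilizes'' the problem — a few highly sensitive populations (large $\epsilon_i$, even with $\epsilon_{\sf max}\ge\mu/L$) can be offset by less sensitive ones — and why the converse construction can be made heterogeneity-agnostic, working for any sensitivity profile with the prescribed average.
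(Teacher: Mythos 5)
Your proposal is correct and follows essentially the same route as the paper: the contraction bound via strong monotonicity of $\grd f(\cdot;\prm)$ combined with the Wasserstein sensitivity estimate $\|\grd f_i(b;\prm)-\grd f_i(b;\prm')\|\le L\epsilon_i\|\prm-\prm'\|$ (which the paper imports as Lemma~\ref{lem: grd_dev} and you re-derive inline via Kantorovich--Rubinstein), then Banach's fixed-point theorem; and for the converse, a scalar quadratic loss with a Gaussian location family, which matches the paper's instance $\ell(\theta;Z)=\tfrac12(\theta-Z)^2$, $Z\sim{\cal N}(\mu_i+\epsilon_i\theta,1)$ up to reparameterization (your linear term $c\prm$ plays the role of the paper's nonzero $\mu_{\sf avg}$ in handling the boundary case $\epsilon_{\sf avg}=\mu/L$).
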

See \S\ref{app:exist} for the proof.
Note the inverse condition number $\mu/L$ yields a \emph{tight} threshold on the sensitivity of the population for the stability of {\aname}. 
Our result can be viewed as the multi-agent extension to \citep[Prop.~4.1]{perdomo2020performative}. 
Notice that while $\thps$ may not solve \eqref{eq:multipfd}, it yields an approximate solution to the latter, depending on the magnitude of $\epsilon_{\sf avg}$; see \citep{perdomo2020performative}. 
 
Our result shows that {\aname} has a relaxed requirement for the existence of Multi-PS solution as it only depends on the \emph{average sensitivity} $\epsilon_{\sf avg}$. 
Consider when $\epsilon_i$ exceeds $\mu/L$ for the population served by $i$th agent, now there may not exist a performative stable solution for the \emph{individual} 
agent
\citep{perdomo2020performative}. Meanwhile, by Proposition~\ref{lem:exist}, the \emph{Multi-PS} solution still exists as long as $\epsilon_{\sf avg} < \mu/L$, e.g., when there are agents in the network with less sensitive users. 

We also compare Proposition~\ref{lem:exist} to \citep{narang2022multiplayer}. Note that the latter considers decision-dependent distributions ${\cal D}_i ( \vartheta )$ with $\vartheta := ( \prm_1, \ldots, \prm_n )$ involving all decisions, which is different from our setting. Nevertheless, under similar conditions to A\ref{ass: strongcvx}--A\ref{ass:sensitive}, \citet[Theorem 1]{narang2022multiplayer} shows the performative stable equilibria exists if $\sum_{i=1}^n \epsilon_i^2 < \mu^2/L^2$. When $\epsilon_i = \epsilon_{\sf avg}$, it yields $\epsilon_{\sf avg} < \mu / (\sqrt{n}L)$ which is more restrictive than the requirement in Proposition~\ref{lem:exist}.  

The relaxed condition in Proposition~\ref{lem:exist} can be attributed to the consensus seeking nature of \eqref{eq:multipfd}, where agents with less sensitive users serve as mediators that prevent divergence of the map \eqref{eq:map_M}.\vspace{.1cm}

\textbf{Convergence Analysis of {\bname} Scheme.} Next, we focus on analyzing the convergence of the {\bname} scheme. We require the following assumptions:
\begin{Assumption}\label{ass: graph}
The non-negative matrix ${\bm W}$ is doubly stochastic, i.e., ${\bm W}\mathbf{1}={\bm W}^\top\mathbf{1}=\mathbf{1}$. There exists a constant ${\rho} \in(0,1]$ 
such that 
$\left\| \bm{W} - (1/n) {\bf 1}{\bf 1}^\top \right\|_{2} \leq 1-{\rho}$.
\end{Assumption}
\begin{Assumption}\label{ass:SecOrdMom}
For any $i=1,\ldots, n$ and fixed $\prm \in \RR^d$, 
there exists $\sigma \geq 0$ such that\vspace{-.05cm}
\begin{align}
\EE_{Z_i \sim {\cal D}_i( \prm )} [ \| \grd \ell( \prm ; Z_i ) - \grd f_i( \prm; \prm ) \|^2 ] \leq \sigmatwo^2 (1+\norm{ \prm- \thps }^2) . \label{A4-1}
\end{align}
\end{Assumption}
A\ref{ass: graph} is a standard assumption on the mixing matrix, for example if $G$ is a connected graph, then ${\bm W}$ satisfying the condition can be constructed \citep{boyd2004fastest}. Meanwhile, A\ref{ass:SecOrdMom} bounds the variance of the `stochastic gradient' $\grd \ell( \prm; Z_i )$ as the expected value to the latter yields $\EE_{Z_i \sim {\cal D}_i( \prm )} [ \grd \ell( \prm; Z_i ) ] = \grd f( \prm ; \prm  )$, where the gradients are taken w.r.t.~the first argument $\prm$.
Moreover, we assume that:
\begin{Assumption}\label{ass:hete}
For any $i=1,\ldots,n$, there exists $\varsigma \geq 0$ such that \vspace{-.05cm}
\begin{align}\label{eq:hete_grad} 
    \| \grd f(\prm; \prm) - \grd f_i( \prm; \prm) \|^2 \leq \varsigma^2(1+\norm{\prm-\thps}^2),~\forall~\prm \in \RR^d.
\end{align}
\end{Assumption}
It bounds the \emph{heterogeneity} of the locally observed samples. As $\grd f_i( \prm; \prm) = \EE_{ Z_i \sim {\cal D}_i( \prm ) } [ \grd \ell (\prm; Z_i )]$, if ${\cal D}_i( \prm ) = {\cal D}_j (\prm)$, the constant will be $\varsigma = 0$ when the population associated with each agent produces identically distributed samples with the same deployed decision vector. Otherwise, $\varsigma > 0$ measures degree of heterogeneity across populations. We remark that even when ${\cal D}_i( \prm ) = {\cal D}_j (\prm)$, the samples $Z_i^{t+1}, Z_j^{t+1}$ may still not be identically distributed \emph{during the {\bname} iterations} since the decision vectors $\prm_i^t, \prm_j^t$ may not be in consensus when $t < \infty$.

A\ref{ass:hete} also implies $\max_{i=1,\ldots,n} \normtxt{ \grd f_i ( \thps; \thps ) }^2 \leq \varsigma^2$. In fact, as we show in \eqref{eq:cons_noA6} of the appendix, it suffices to prove the convergence of {\bname} without A\ref{ass:hete} as in \citep{pu2021sharp, yuan2021removing}.
We proceed our analysis with A\ref{ass:hete} to extract a tighter bound especially when $\varsigma$ is small. 

In both A\ref{ass:SecOrdMom}, A\ref{ass:hete}, we allowed the upper bounds to grow with ${\cal O}(1 + \| \prm - \thps \|^2)$, e.g. A\ref{ass:SecOrdMom} is similar to \citep[Assumption 1]{pu2021sharp}. This is  a weaker condition than the commonly assumed uniform bound of ${\cal O}(1)$, e.g., in \citep{lian2017decentralized}. For example, it covers situations when the quadratic components in $\ell(\cdot)$ depends on both $Z$ and $\prm$. Note that this relaxation implies that the variance/heterogeneity can be unbounded, leading to additional challenges in our analysis. 

Define the following quantities and constants: fix any $\delta > 0$,
\begin{align}
& \textstyle \Bprm^t := (1/n)\sum_{i=1}^n \prm_i^t,~~\Tprm^t := \Bprm^t - \thps,~~\CSE{t} := \big( \prm_1^t~\cdots~\prm_n^t \big) - \Bprm^t {\bf 1}^\top,~~\tmu \eqdef \mu - (1+\delta)\epsilon_{\sf avg} L, \notag \\[.1cm]
& \displaystyle c_1 := \frac{L(1+\epsilon_{\sf max})^2}{2n\delta \epsilon_{\sf avg}},~~c_2 \eqdef 4 \left( \frac{ \sigma^2 }{n} + L^2 ( 1 + \epsilon_{\sf max} )^2 \right), ~~c_3\eqdef 12 \sigma^2 + 18L^2(1+ \epsilon_{\sf max})^2. \label{eq:c1c2c3}
\end{align} 
Note that $\Tprm^t$ is the distance between the $t$th averaged iterate $\Bprm^t$ and $\thps$, while $\CSE{t}$ is a $d \times n$ matrix of the $t$th \emph{consensus error}.
The following theorem establishes the convergence rate of {\bname}:
\begin{center}
\fbox{\begin{minipage}{.975\linewidth}
\begin{theorem}\label{thm1}
Under A\ref{ass: strongcvx}--A\ref{ass:hete} and the condition on average sensitivity that $\epsilon_{\sf avg} < \frac{\mu}{(1+\delta) L}$. Suppose the step sizes $\{ \gamma_{t} \}_{t \geq 1}$ satisfy $\gamma_{t+1} \leq \gamma_{t}$ for any $t \geq 1$, 
\beq \label{eq:stepsizecond}
\sup_{t \geq 1}~\gamma_{t} \leq \min\left\{ \frac{4}{\tmu},~ \frac{ \tmu }{ c_2 } , \frac{ \rho }{ \sqrt{2 c_3} }, 
\sqrt{ \frac{ \rho^2 \tmu }{ 192 c_1 ( \sigma^2 + \varsigma^2 ) } } , \frac{ \rho c_1 }{ 4 \tmu c_1 + \rho c_2 }
\right\},
\eeq 
and the condition $\frac{ \gamma_{t} }{ \gamma_{t+1} } \leq \min\{ \sqrt{1 + (\tmu/4) \gamma_{t+1}^2}, \sqrt[3]{ 1 + (\tmu/4) \gamma_{t+1}^3 }, 1 + \rho/(4-2\rho) \}$ for any $t \geq 1$. 
Then, the iterates generated by {\bname} admit the following bound for any $t \geq 0$,
\begin{align}
& \EE \left[ \norm{ \Tprm^{t+1} }^2 
\right] \leq 
\prod_{i=1}^{t+1} \left( 1 - \frac{\tmu \gamma_i}{ 2 } \right) {\sf D} + \frac{ 288 c_1 ( \sigma^2 + \varsigma^2 ) }{ \rho^2 \tmu } \gamma_{t+1}^2 + \frac{ 8 \sigma^2 }{ \tmu n } \gamma_{t+1},
\label{eq:mainthm} \\
& \EE \left[ {\textstyle \frac{1}{n}} \norm{\CSE{t+1}}^2_F \right] \leq \left(1- \frac{\rho}{2} \right)^{t+1} {\textstyle \frac{1}{n}} \norm{\CSE{0}}_F^2 + \frac{ 2(9 + 12 \overline{\Delta}) (\sigma^2+\varsigma^2) }{\rho^2} \gamma_{t+1}^2, \label{eq:cseerror}
\end{align}
where ${\sf D}\eqdef \| \Tprm^0 \|^2 + \gamma_{1} \frac{8c_1}{n \rho} \norm{\CSE{0}}^2$ denotes the initial error, $\overline{\Delta} := {\sf D} + \frac{3}{2} + \frac{8 \sigma^2}{ c_2 n }$, and we recall the definitions of the constants $c_1, c_2, c_3$ from \eqref{eq:c1c2c3}.
\end{theorem}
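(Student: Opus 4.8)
The plan is to split the dynamics \eqref{eq:dsgd} into the averaged iterate $\Bprm^t$ and the consensus matrix $\CSE{t}$, derive a coupled pair of recursive inequalities for $\EE\|\Tprm^t\|^2$ and $\EE\|\CSE{t}\|_F^2$, and combine them through a time-varying Lyapunov function $\mathcal{V}_t := \EE\|\Tprm^t\|^2 + \kappa_t\,\EE\|\CSE{t}\|_F^2$ with $\kappa_t \propto \gamma_t$ chosen so that $\mathcal{V}_0 = {\sf D}$. I will repeatedly use the ``distribution-Lipschitz'' estimate underlying Proposition~\ref{lem:exist}: since A\ref{ass: smooth} makes $z \mapsto \grd\ell(\prm;z)$ $L$-Lipschitz, combining it with A\ref{ass:sensitive} and Kantorovich--Rubinstein duality gives $\|\grd f_i(\prm;\bar\prm) - \grd f_i(\prm;\bar\prm')\| \le L\epsilon_i\|\bar\prm - \bar\prm'\|$, hence $\|\grd f(\prm;\bar\prm) - \grd f(\prm;\bar\prm')\| \le L\epsilon_{\sf avg}\|\bar\prm - \bar\prm'\|$; together with $\grd f(\thps;\thps) = {\bm 0}$ this converts the modulus $\mu$ into $\tmu = \mu - (1+\delta)\epsilon_{\sf avg}L$ and accounts for the threshold $\epsilon_{\sf avg} < \mu/((1+\delta)L)$.

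\textbf{Averaged-iterate recursion.} Since ${\bm W}$ is doubly stochastic (A\ref{ass: graph}), averaging \eqref{eq:dsgd} removes the mixing step: $\Bprm^{t+1} = \Bprm^t - \gamma_{t+1}\tfrac1n\sum_i\grd\ell(\prm_i^t;Z_i^{t+1})$. Write $\tfrac1n\sum_i\grd\ell(\prm_i^t;Z_i^{t+1}) = \grd f(\Bprm^t;\Bprm^t) + b^t + \xi^{t+1}$, where $b^t := \tfrac1n\sum_i[\grd f_i(\prm_i^t;\prm_i^t) - \grd f_i(\Bprm^t;\Bprm^t)]$ is an $\mathcal{F}_t$-measurable disagreement bias and $\xi^{t+1}$ is zero-mean given $\mathcal{F}_t$. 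Expanding $\|\Tprm^{t+1}\|^2$ and taking $\EE_t[\cdot]$: the noise cross term drops; $\langle\grd f(\Bprm^t;\Bprm^t), \Tprm^t\rangle \ge (\mu - L\epsilon_{\sf avg})\|\Tprm^t\|^2$ by splitting $\grd f(\Bprm^t;\Bprm^t) - \grd f(\thps;\thps)$ into a piece handled by strong convexity of $f(\cdot;\Bprm^t)$ (A\ref{ass: strongcvx}) and a piece handled by the distribution-Lipschitz estimate; the bias cross term is controlled by Young's inequality with weight $\propto \delta\epsilon_{\sf avg}L$, which both yields the rate $\tmu$ and, via $\|b^t\|^2 \le \tfrac{L^2(1+\epsilon_{\sf max})^2}{n}\|\CSE{t}\|_F^2$ (A\ref{ass: smooth} and distribution-Lipschitz), produces the coupling term $\gamma_{t+1}c_1\|\CSE{t}\|_F^2$; and the $\gamma_{t+1}^2$ remainders, together with $\EE_t\|\xi^{t+1}\|^2 \le \tfrac{\sigma^2}{n}(1 + 2\|\Tprm^t\|^2) + \tfrac{2\sigma^2}{n^2}\|\CSE{t}\|_F^2$ (A\ref{ass:SecOrdMom}, independence of the $Z_i^{t+1}$), are absorbed into the contraction using $\gamma_{t+1} \le \min\{4/\tmu,\, \tmu/c_2\}$. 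This gives $\EE\|\Tprm^{t+1}\|^2 \le (1-\tfrac{\tmu\gamma_{t+1}}{2})\EE\|\Tprm^t\|^2 + \gamma_{t+1}c_1\EE\|\CSE{t}\|_F^2 + \mathcal{O}(\tfrac{\sigma^2}{n})\gamma_{t+1}^2$, the $\tfrac{\sigma^2}{n}$-floor being the term that eventually accumulates into the $\mathcal{O}(\gamma_{t+1}/n)$ linear-speedup term of \eqref{eq:mainthm}.

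\textbf{Consensus recursion and coupling.} Subtracting the average from \eqref{eq:dsgd} and using ${\bm W}\mathbf{1} = \mathbf{1}$, $\CSE{t+1} = \CSE{t}({\bm W} - \tfrac1n\mathbf{1}\mathbf{1}^\top) - \gamma_{t+1}G^{t+1}(I - \tfrac1n\mathbf{1}\mathbf{1}^\top)$ with $G^{t+1}$ the matrix of local stochastic gradients; A\ref{ass: graph} contracts the first term by $1-\rho$, a Young split with $\EE_t\|G^{t+1}\|_F^2 \le c_3(\|\CSE{t}\|_F^2 + n\|\Tprm^t\|^2) + n\,\mathcal{O}(\sigma^2+\varsigma^2)$ (from A\ref{ass:SecOrdMom}, A\ref{ass:hete}, A\ref{ass: smooth}, $\grd f(\thps;\thps) = {\bm 0}$ and $\|\prm_i^t - \thps\|^2 \le 2\|\prm_i^t - \Bprm^t\|^2 + 2\|\Tprm^t\|^2$), and $\gamma_{t+1} \le \rho/\sqrt{2c_3}$ to absorb the $\|\CSE{t}\|_F^2$ feedback give $\EE\|\CSE{t+1}\|_F^2 \le (1-\tfrac\rho2)\EE\|\CSE{t}\|_F^2 + \tfrac{c_3 n}{\rho}\gamma_{t+1}^2\EE\|\Tprm^t\|^2 + \tfrac n\rho\gamma_{t+1}^2\mathcal{O}(\sigma^2+\varsigma^2)$. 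Combining the two recursions in $\mathcal{V}_t$: because the $\CSE{t}$-into-$\Tprm^{t+1}$ coupling is $\mathcal{O}(\gamma)$ while the $\Tprm^t$-into-$\CSE{t+1}$ coupling is $\mathcal{O}(\gamma^2)$, and $\kappa_t = \mathcal{O}(\gamma_t)$, the last two bounds on $\sup_t\gamma_t$ in \eqref{eq:stepsizecond} together with $\gamma_{t+1}\le\gamma_t$ and the ratio bound $\gamma_t/\gamma_{t+1} \le 1 + \rho/(4-2\rho)$ are exactly calibrated to leave $\mathcal{V}_{t+1} \le (1-\tfrac{\tmu\gamma_{t+1}}{2})\mathcal{V}_t + \mathcal{O}(\tfrac{\sigma^2}{n})\gamma_{t+1}^2 + \mathcal{O}(\tfrac{c_1(\sigma^2+\varsigma^2)}{\rho^2})\gamma_{t+1}^3$, where the $\gamma^3$ term is $\kappa_{t+1}=\mathcal{O}(\gamma_{t+1})$ times the $\mathcal{O}(\gamma^2)$ floor of the consensus recursion. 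Unrolling and bounding the two weighted sums by the standard diminishing-step lemma — whose hypotheses are exactly $\gamma_t/\gamma_{t+1}\le\sqrt{1+(\tmu/4)\gamma_{t+1}^2}$ and $\gamma_t/\gamma_{t+1}\le\sqrt[3]{1+(\tmu/4)\gamma_{t+1}^3}$, yielding $\sum_j(\prod_{i>j}(1-\tmu\gamma_i/2))\gamma_j^2 = \mathcal{O}(\gamma_{t+1}/\tmu)$ and $\sum_j(\prod_{i>j}(1-\tmu\gamma_i/2))\gamma_j^3 = \mathcal{O}(\gamma_{t+1}^2/\tmu)$ — produces \eqref{eq:mainthm} via $\EE\|\Tprm^{t+1}\|^2 \le \mathcal{V}_{t+1}$. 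Substituting the resulting uniform bound $\sup_t\EE\|\Tprm^t\|^2 \le \overline{\Delta}$ (which follows from \eqref{eq:mainthm} and the step-size bounds) into the consensus recursion and unrolling its geometric $(1-\rho/2)$ contraction — again pulling $\gamma_j$ past the geometric weights using $\gamma_{t+1}\le\gamma_t$ and the ratio bound — gives \eqref{eq:cseerror}.

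\textbf{Main obstacle.} The crux is closing the two coupled recursions under the \emph{state-dependent} variance and heterogeneity bounds A\ref{ass:SecOrdMom}, A\ref{ass:hete}: because their right-hand sides carry $\|\cdot-\thps\|^2$, every $\gamma^2$-order error term reintroduces both $\EE\|\Tprm^t\|^2$ and $\EE\|\CSE{t}\|_F^2$, so the recursions genuinely feed each other and the argument is a simultaneous fixed-point estimate rather than two independent ones. Making it work requires the fine calibration of all the step-size conditions in \eqref{eq:stepsizecond}, each neutralizing one cross-term, so that the contraction factor stays at $1-\tmu\gamma_{t+1}/2$ and the $\mathcal{O}(\gamma^2)$ bias/heterogeneity contribution is cleanly separated from the $\mathcal{O}(\gamma/n)$ noise-floor contribution that carries the linear speedup. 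A secondary, performativity-specific difficulty is that the stochastic gradient in \eqref{eq:dsgd} is \emph{biased} (cf.\ the discussion after that display), which forbids ever invoking convexity of the performative risk $f(\prm;\prm)$; only strong convexity of $f(\cdot;\bar\prm)$ for a fixed second argument plus the $L\epsilon$-Lipschitz dependence on that argument are available, and this asymmetry is exactly what forces the degradation $\mu \to \tmu$.
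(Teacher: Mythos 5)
Your proposal is correct and follows essentially the same route as the paper: the same descent lemma for $\Tprm^t$ with the $\mu \to \tmu$ degradation via Young's inequality, the same consensus-error recursion with the $\gamma^2$-coupling back to $\|\Tprm^t\|^2$ induced by the state-dependent bounds in A\ref{ass:SecOrdMom}--A\ref{ass:hete}, the same Lyapunov function with $\gamma_t$-proportional weight on the consensus term, and the same final step of substituting $\sup_t \EE\|\Tprm^t\|^2 \le \overline{\Delta}$ back into the consensus recursion to obtain \eqref{eq:cseerror}. Your identification of the role of each step-size condition and of the simultaneous (rather than sequential) treatment of the two recursions as the main obstacle matches the paper's argument.
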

\end{minipage}} \end{center}

The free parameter $\delta > 0$ in \eqref{eq:c1c2c3} can be chosen arbitrarily. Thus, according to Theorem~\ref{thm1}, {\bname} converges when the average sensitivity $\epsilon_{\sf avg}$ is strictly below the threshold $\mu/L$ in Proposition~\ref{lem:exist}, i.e., the same sufficient and necessary condition that guarantees the existence of $\thps$. 
Further, our result holds for general step size rules such as constant step size and diminishing step size. 

To yield $\EE[ \| \Tprm^t \|^2 ] \to 0$, a common option is $\gamma_t = \frac{ a_0 }{ a_1 + t}$ for some $a_0, a_1 > 0$. In the latter case as $\gamma_t = {\cal O}(1/t)$, we observe from \eqref{eq:mainthm}, \eqref{eq:cseerror} that the distance to performative stable solution $\| \Tprm^t \|^2$ converges to zero as ${\cal O}(1/t)$, while the consensus error $\| \CSE{t} \|_F^2$ converges to zero as ${\cal O}(1/t^2)$.
We remark that our analysis also applies to the case of time varying graph; see \S\ref{sec:timevarying} for a proof sketch.
\vspace{.1cm}

On the other hand, the bound \eqref{eq:mainthm} can be simplified to
\beq \label{eq:net_ind}
\EE[ \| \Bprm^{t} - \thps \|^2 ] \lesssim  {\textstyle \prod_{i=1}^{t} \left( 1 - \frac{\tmu \gamma_{i}}{2} \right)} + \frac{L(\sigma^2 + \varsigma^2)}{ n \delta  \tmu \rho^2 \epsilon_{\sf avg} } \, \gamma_{t}^2 + \frac{\sigma^2}{n \tmu} \gamma_t.
\eeq 
As seen, the error is controlled by three terms. The first term is a transient term that consists of the product $\prod_{i=1}^t (1- \tmu \gamma_i)$. It decays sub-geometrically and is scaled by the initial error ${\sf D}$. The second term is a transient term and is affected by the spectral gap of mixing matrix $\rho$, the degree of heterogeneity $\varsigma$, etc. It decays as ${\cal O}(\gamma_t^2)$. Finally, the last term is a fluctuation term that only depends on the averaged noise variance ${\cal O}(\sigma^2 / n)$. It decays at the slowest rate as ${\cal O}(\gamma_t)$.

\textbf{Effects of Network Topology and Heterogeneity.} An interesting observation from \eqref{eq:net_ind} is on how the network topology ($\rho$) and heterogeneity across local sample distribution ($\varsigma$) affects the convergence behavior of {\bname}. First, we note that the last term of ${\cal O}( \gamma_t \, \sigma^2 / (\tmu n))$ is similar to the fluctuation term in SGD using the batch size of $n$ under a centralized setting, e.g., \citep{moulines2011non}. Second, the constants $\rho, \varsigma$ only affect the term of rate ${\cal O}(\gamma_t^2)$. 

In the case with diminishing step size  $\gamma_t = \frac{a_0}{a_1+t}$, the second term of \eqref{eq:net_ind} will vanish at a faster rate than the last term. Particularly, if for some ${\rm C} > 0$,
\beq \label{eq:transient}
\gamma_t \leq {\rm C} \cdot \delta \rho^2 \epsilon_{\sf avg} \sigma^2 [ L ( \sigma^2 + \varsigma^2 ) ]^{-1} 
\eeq
then \eqref{eq:net_ind} will be dominated by ${\cal O}( \gamma_t \, \sigma^2 /( \tmu n) )$. In other words, the effects of the network topology and heterogeneous population vanish asymptotically as $\Bprm^t \to \thps$.

As such, the `linear speedup' behavior of {DSGD} in \citep{lian2017decentralized, pu2021sharp} can be extended to {\aname} with the {\bname} scheme. Again, we highlight that the consensus seeking behavior of agents has led to such speedup in convergence towards the Multi-PS solution.\vspace{-.2cm}


\subsection{Proof of Theorem~\ref{thm1}}\vspace{-.1cm}
We outline the main steps in proving Theorem \ref{thm1}. 
As a preparatory step, we borrow the following lemma on the Lipschitzness of $\grd f_i (\prm, \prm)$ from \citep[Lemma 2.1]{drusvyatskiy2020stochastic}:
\begin{lemma}[{\bf Continuity of $\grd f_i$}]\label{lem: grd_dev}
Under A\ref{ass: smooth}, A\ref{ass:sensitive}. For any $\prm_0, \prm_1, \prm, \prm^\prime \in \RR^d$, it holds:
\begin{eqnarray}\label{lem: grd_ub_ineq}
    \begin{aligned}
        \norm{\grd f_{i}( \prm_0 ; \prm) - \grd f_{i}( \prm_1 ; \prm^\prime) }
        &\leq  L\norm{ \prm_0 - \prm_1 } + L \epsilon_i\norm{\prm-\prm^\prime}.
    \end{aligned}
\end{eqnarray}
\end{lemma}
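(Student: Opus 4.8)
The plan is to split the bound into two one-variable Lipschitz estimates via the triangle inequality, treating the ``decision'' argument with A\ref{ass: smooth} and the ``distribution'' argument with A\ref{ass:sensitive} together with the Kantorovich--Rubinstein representation of the Wasserstein-1 distance. First, recall that $f_i(\prm;\bar\prm) = \EE_{Z \sim {\cal D}_i(\bar\prm)}[\ell(\prm;Z)]$ and that under A\ref{ass: smooth} differentiation may be interchanged with the expectation, so $\grd f_i(\prm;\bar\prm) = \EE_{Z \sim {\cal D}_i(\bar\prm)}[\grd \ell(\prm;Z)]$. Inserting the intermediate point $\grd f_i(\prm_1;\prm)$,
\[
\norm{\grd f_i(\prm_0;\prm) - \grd f_i(\prm_1;\prm')}
\;\le\; \underbrace{\norm{\grd f_i(\prm_0;\prm) - \grd f_i(\prm_1;\prm)}}_{=:\,T_1}
\;+\; \underbrace{\norm{\grd f_i(\prm_1;\prm) - \grd f_i(\prm_1;\prm')}}_{=:\,T_2}.
\]

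For $T_1$, both gradients are averaged against the \emph{same} distribution ${\cal D}_i(\prm)$, so by Jensen's inequality and A\ref{ass: smooth} applied with $z=z'$,
\[
T_1 \;\le\; \EE_{Z \sim {\cal D}_i(\prm)}\big[\norm{\grd \ell(\prm_0;Z) - \grd \ell(\prm_1;Z)}\big] \;\le\; L\norm{\prm_0-\prm_1}.
\]
For $T_2$, the loss argument is fixed at $\prm_1$ while the distributions differ. A\ref{ass: smooth} applied with $\prm=\prm'$ shows that $z \mapsto \grd \ell(\prm_1;z)$ is $L$-Lipschitz on $\Zset$. Let $\pi$ be any coupling of ${\cal D}_i(\prm)$ and ${\cal D}_i(\prm')$. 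Then $\grd f_i(\prm_1;\prm) - \grd f_i(\prm_1;\prm') = \EE_{(Z,Z')\sim\pi}\big[\grd \ell(\prm_1;Z) - \grd \ell(\prm_1;Z')\big]$, hence $T_2 \le L\,\EE_{(Z,Z')\sim\pi}[\norm{Z-Z'}]$. Taking the infimum over all couplings $\pi$ and invoking A\ref{ass:sensitive} gives $T_2 \le L\,{\cal W}_1({\cal D}_i(\prm),{\cal D}_i(\prm')) \le L\epsilon_i\norm{\prm-\prm'}$. Adding the two bounds yields the claimed inequality.

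The only delicate point --- the main obstacle --- is the step in $T_2$ passing from the difference of expectations of a vector-valued $L$-Lipschitz map to the Wasserstein-1 distance: this must go through an optimal (or $\varepsilon$-optimal) coupling and the vector-valued Kantorovich--Rubinstein bound, rather than the scalar dual formulation. Once that is in place, the remaining arguments are routine applications of Jensen's inequality and $L$-smoothness, and the measurability/integrability of $\grd\ell(\prm;\cdot)$ and the interchange of $\grd$ with $\EE$ are again guaranteed by A\ref{ass: smooth}.
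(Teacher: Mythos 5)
Your proof is correct. The paper does not prove this lemma itself --- it borrows it directly from \citet[Lemma 2.1]{drusvyatskiy2020stochastic} --- and your argument (triangle inequality through the intermediate point $\grd f_i(\prm_1;\prm)$, Jensen plus $L$-smoothness for the first term, and an optimal coupling with the vector-valued Kantorovich--Rubinstein bound for the second) is exactly the standard proof of that cited result, including the correct observation that one must argue via couplings rather than the scalar dual formulation.
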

By A\ref{ass: graph}, the update recursion for the average iterate of {\bname} can be expressed as
\beq \textstyle \label{eq:dsgd_avg}
\Bprm^{t+1} = (1/n)\sum_{i=1}^n \prm_i^{t+1} = \Bprm^t - \gamma_{t+1} \sum_{i=1}^n \grd \ell( \prm_i^t ; Z_i^{t+1} ) / n.
\eeq 
Using the above recursion,  we show the following lemma for the one-step progress of {\bname}:
\begin{lemma}\label{lem:descent}
{\bf (Descent Lemma)} Fix any $\delta > 0$ and let $\epsilon_{\sf avg}\leq \frac{\mu}{(1+\delta) L}$. Under A\ref{ass: strongcvx}, A\ref{ass: smooth}, A\ref{ass:sensitive}, A\ref{ass:SecOrdMom} and let the step sizes satisfy $\sup_{t \geq 0} \gamma_{t+1} \leq \frac{ \tmu }{ c_2 }$, the following bound holds
\begin{align}\label{lem:des_eq}
\EE_t \norm{\Tprm^{t+1}}^2 \leq ( 1 - \tmu \gamma_{t+1})\norm{\Tprm^t}^{2} + \left[c_1 \gamma_{t+1} + c_2 \gamma_{t+1}^2 \right] {\textstyle \frac{1}{n}} \norm{\CSE{t}}^2_F + \frac{2\sigma^2}{n}\gamma_{t+1}^2,
\end{align}
for any $t \geq 0$,
where $\EE_t[\cdot]$ is the expectation operator conditioned on the iterates up to the $t$th iteration, and we recall the definitions of $c_1, c_2, \tmu$ from \eqref{eq:c1c2c3}.
\end{lemma}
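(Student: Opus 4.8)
The plan is to track the squared distance $\norm{\Tprm^{t+1}}^2 = \norm{\Bprm^{t+1} - \thps}^2$ using the averaged recursion \eqref{eq:dsgd_avg}. Expanding the square, I would write
\begin{align*}
\norm{\Tprm^{t+1}}^2 = \norm{\Tprm^t}^2 - 2\gamma_{t+1} \pscal{ \Tprm^t }{ {\textstyle \frac1n}\sum_i \grd\ell(\prm_i^t;Z_i^{t+1}) } + \gamma_{t+1}^2 \normtxt{ {\textstyle \frac1n}\sum_i \grd\ell(\prm_i^t;Z_i^{t+1}) }^2.
\end{align*}
Taking $\EE_t[\cdot]$, the cross term's conditional expectation replaces the stochastic gradient by $\frac1n\sum_i \grd f_i(\prm_i^t;\prm_i^t)$. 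The strategy is then to compare this quantity to $\grd f(\Bprm^t;\Bprm^t)$ and to $\grd f(\Bprm^t;\thps)$, absorbing the discrepancies into the consensus error $\frac1n\norm{\CSE{t}}_F^2$ via Lemma~\ref{lem: grd_dev}, and to use strong convexity (A\ref{ass: strongcvx}) together with the fixed-point property $\grd f(\thps;\thps) = {\bm 0}$ to extract the contraction factor.

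Concretely, the core inequality I would derive for the inner product is a ``performative strong monotonicity'' bound of the form $\pscal{ \Tprm^t }{ \grd f(\Bprm^t;\Bprm^t) } \geq \tmu \norm{\Tprm^t}^2 / (\text{const})$ up to a consensus-error remainder. This follows the standard performative-prediction argument: write $\grd f(\Bprm^t;\Bprm^t) - \grd f(\thps;\thps) = [\grd f(\Bprm^t;\Bprm^t) - \grd f(\Bprm^t;\thps)] + [\grd f(\Bprm^t;\thps) - \grd f(\thps;\thps)]$; the second bracket, dotted with $\Tprm^t$, is lower bounded by $\mu\norm{\Tprm^t}^2$ by strong convexity; the first bracket has norm at most $L \epsilon_{\sf avg} \norm{\Tprm^t}$ by Lemma~\ref{lem: grd_dev} (averaged over $i$, using $\epsilon_{\sf avg}$), so by Cauchy--Schwarz it subtracts at most $L\epsilon_{\sf avg}\norm{\Tprm^t}^2$. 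The gap between $\frac1n\sum_i\grd f_i(\prm_i^t;\prm_i^t)$ and $\grd f(\Bprm^t;\Bprm^t)$ is again controlled by Lemma~\ref{lem: grd_dev}: it is at most $\frac{L(1+\epsilon_{\sf max})}{n}\sum_i \norm{\prm_i^t - \Bprm^t} \leq L(1+\epsilon_{\sf max}) \sqrt{\frac1n \norm{\CSE{t}}_F^2}$. When this term is split off with Young's inequality $2ab \leq \frac{\delta\epsilon_{\sf avg}L}{1}a^2 + \frac{1}{\delta\epsilon_{\sf avg}L}b^2$ calibrated by the free parameter $\delta$, it produces exactly a $-\delta\epsilon_{\sf avg}L\norm{\Tprm^t}^2$ correction (yielding the $\tmu = \mu - (1+\delta)\epsilon_{\sf avg}L$ coefficient) plus a consensus term scaled by $c_1 = \frac{L(1+\epsilon_{\sf max})^2}{2n\delta\epsilon_{\sf avg}}$.

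For the quadratic term $\gamma_{t+1}^2 \EE_t\normtxt{ \frac1n\sum_i\grd\ell(\prm_i^t;Z_i^{t+1}) }^2$, I would add and subtract $\frac1n\sum_i \grd f_i(\prm_i^t;\prm_i^t)$, use independence of the $Z_i^{t+1}$ across $i$ to get the $\frac1n$ variance reduction on the noise piece (invoking A\ref{ass:SecOrdMom}, with the $\norm{\prm_i^t-\thps}^2$ terms bounded by $2\norm{\Tprm^t}^2 + 2\frac1n\norm{\CSE{t}}_F^2$), and bound the deterministic piece $\normtxt{\frac1n\sum_i\grd f_i(\prm_i^t;\prm_i^t)}^2$ by relating it to $\grd f(\thps;\thps)={\bm 0}$ through Lemma~\ref{lem: grd_dev}, producing $L^2(1+\epsilon_{\sf max})^2(\norm{\Tprm^t}^2 + \frac1n\norm{\CSE{t}}_F^2)$-type terms. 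Collecting everything gives a bound of the shape $(1 - 2\tmu\gamma_{t+1} + \mathcal{O}(\gamma_{t+1}^2))\norm{\Tprm^t}^2 + [c_1\gamma_{t+1} + c_2\gamma_{t+1}^2]\frac1n\norm{\CSE{t}}_F^2 + \frac{2\sigma^2}{n}\gamma_{t+1}^2$; the step-size condition $\gamma_{t+1} \leq \tmu/c_2$ is then precisely what absorbs the $\mathcal{O}(\gamma_{t+1}^2)\norm{\Tprm^t}^2$ term into the linear one to leave the clean factor $(1-\tmu\gamma_{t+1})$, and $c_2 = 4(\sigma^2/n + L^2(1+\epsilon_{\sf max})^2)$ is sized to cover both the variance-growth and smoothness contributions.

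The main obstacle I anticipate is the careful bookkeeping of constants so that the $\norm{\Tprm^t}^2$ coefficient in the quadratic term is dominated exactly under $\gamma_{t+1}\leq\tmu/c_2$ and that all the $(1+\norm{\prm_i^t-\thps}^2)$ growth terms from A\ref{ass:SecOrdMom} get correctly split into a $\norm{\Tprm^t}^2$ part, a $\frac1n\norm{\CSE{t}}_F^2$ part, and a constant part — the latter being the source of the $\frac{2\sigma^2}{n}\gamma_{t+1}^2$ residual (here I'd need to double-check whether the ``$1+$'' constants in A\ref{ass:SecOrdMom} are being folded into $c_2$ or appearing separately; the statement as written suggests they end up in $c_1,c_2$). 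A secondary subtlety is making sure the averaging over agents uses $\epsilon_{\sf avg}$ in the contraction-critical place (the $\grd f(\prm;\cdot)$ sensitivity) but $\epsilon_{\sf max}$ in the consensus-error and variance places, which is what creates the asymmetric appearance of $\epsilon_{\sf avg}$ versus $\epsilon_{\sf max}$ in $c_1, c_2$ and in $\tmu$.
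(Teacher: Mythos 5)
Your proposal is correct and follows essentially the same route as the paper's proof: expand the averaged recursion, lower-bound the cross term by splitting through intermediate points and applying Lemma~\ref{lem: grd_dev} plus strong convexity (the paper splits directly via $\grd f_i(\Bprm^t;\thps)$ rather than passing through $\grd f(\Bprm^t;\Bprm^t)$ first, but this is immaterial), calibrate Young's inequality with $\delta$ to produce $\tmu$ and $c_1$, bound the quadratic term via independence, A\ref{ass:SecOrdMom} and the fixed-point condition to get $c_2$ and the $2\sigma^2\gamma_{t+1}^2/n$ residual, and absorb the $c_2\gamma_{t+1}^2\norm{\Tprm^t}^2$ term using $\gamma_{t+1}\leq\tmu/c_2$. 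Your anticipated bookkeeping concerns resolve exactly as you guessed: the ``$1+$'' constants from A\ref{ass:SecOrdMom} yield the standalone $2\sigma^2\gamma_{t+1}^2/n$ term while the growth parts fold into $c_2$, and $\epsilon_{\sf avg}$ appears only in the contraction-critical sensitivity step while $\epsilon_{\sf max}$ appears in the consensus and variance bounds.
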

The proof is in \S\ref{appendix:lem_des}. 
We highlight that proving the upper bound \eqref{lem:des_eq} requires the smoothness property Lemma~\ref{lem: grd_dev} for handling the difference $\sum_{i=1}^{n}\grd f_i(\prm_i^t, \prm_i^t) - \grd f_i(\thps, \thps)$ as proportional to the error against the Multi-PS solution $\Tprm^t$ and the consensus error $\normtxt{ \CSE{t} }_F^2$. 
Lemma~\ref{lem:descent} prompts us to study the consensus error $\norm{\CSE{t}}_F^2$ and a key observation is:
\begin{lemma}\label{lem: consens}
{\bf (Consensus Error Bound)}
Under A\ref{ass: smooth}--A\ref{ass:hete} and let the step sizes satisfy $\sup_{t \geq 0} \gamma_{t+1} \leq {\rho}/\sqrt{2 c_3}$, then it holds 
\begin{align}\label{lem:consens_eq}
\EE_t[ {\textstyle \frac{1}{n}} \norm{\CSE{t+1}}^2_F] \leq \left(1 - \frac{\rho}{2} \right) {\textstyle \frac{1}{n}} \norm{\CSE{t}}_F^2 +  12 [ \sigma^2 + \varsigma^2 ] \frac{ \gamma_{t+1}^2 }{\rho } \norm{\Tprm^t}^2 
+ 9(\sigma^2+\varsigma^2)\frac{\gamma_{t+1}^2}{\rho},
\end{align}
for any $t \geq 0$,
where we recall that $c_3\eqdef 12 \sigma^2 + 18L^2(1+ \epsilon_{\sf max})^2$.
\end{lemma}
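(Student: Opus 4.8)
The plan is to derive a linear recursion for the consensus-error matrix and then iterate the mixing contraction of A\ref{ass: graph}. Stack the local iterates as $\Theta^t := (\prm_1^t~\cdots~\prm_n^t) \in \RR^{d\times n}$ and the (biased) stochastic gradients as $\bm{G}^t := (\grd\ell(\prm_1^t;Z_1^{t+1})~\cdots~\grd\ell(\prm_n^t;Z_n^{t+1}))$, and put $\bm{J}:=\frac1n{\bf 1}{\bf 1}^\top$, so that $\CSE{t}=\Theta^t(\bm{I}-\bm{J})$. Using that $\bm{W}$ is symmetric and doubly stochastic, \eqref{eq:dsgd} gives $\CSE{t+1}=\CSE{t}(\bm{W}-\bm{J})-\gamma_{t+1}\bm{G}^t(\bm{I}-\bm{J})$, because $(\bm{I}-\bm{J})$ annihilates the averaged component. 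Since $\norm{\bm{W}-\bm{J}}_2\leq1-\rho$ by A\ref{ass: graph}, taking Frobenius norms, using $\norm{\CSE{t}(\bm{W}-\bm{J})}_F\leq(1-\rho)\norm{\CSE{t}}_F$, and applying Young's inequality \emph{pathwise} (before conditioning) with parameter $\rho/(1-\rho)$ yields
\[
\EE_t\norm{\CSE{t+1}}_F^2 \;\leq\; (1-\rho)\,\norm{\CSE{t}}_F^2 \;+\; \frac{\gamma_{t+1}^2}{\rho}\,\EE_t\norm{\bm{G}^t(\bm{I}-\bm{J})}_F^2 .
\]

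The remaining task is to bound $\EE_t\norm{\bm{G}^t(\bm{I}-\bm{J})}_F^2$. As $(\bm{I}-\bm{J})$ kills every rank-one matrix $\bm{v}{\bf 1}^\top$, I may subtract the common reference $\bm{v}=\grd f(\Bprm^t;\Bprm^t)$, getting $\norm{\bm{G}^t(\bm{I}-\bm{J})}_F^2\leq\sum_{i=1}^n\norm{\grd\ell(\prm_i^t;Z_i^{t+1})-\grd f(\Bprm^t;\Bprm^t)}^2$, and split each summand into (i) the conditionally mean-zero noise $\grd\ell(\prm_i^t;Z_i^{t+1})-\grd f_i(\prm_i^t;\prm_i^t)$, whose second moment is controlled by A\ref{ass:SecOrdMom}; (ii) the per-agent drift $\grd f_i(\prm_i^t;\prm_i^t)-\grd f_i(\Bprm^t;\Bprm^t)$, which Lemma~\ref{lem: grd_dev} bounds by $L(1+\epsilon_i)\norm{\prm_i^t-\Bprm^t}\leq L(1+\epsilon_{\sf max})\norm{\prm_i^t-\Bprm^t}$ and which thus feeds the consensus error; and (iii) the heterogeneity $\grd f_i(\Bprm^t;\Bprm^t)-\grd f(\Bprm^t;\Bprm^t)$, bounded by $\varsigma^2(1+\norm{\Tprm^t}^2)$ via A\ref{ass:hete}. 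Invoking $\norm{a+b+c}^2\leq3(\norm{a}^2+\norm{b}^2+\norm{c}^2)$, taking $\EE_t$, averaging over $i$, using the identity $\frac1n\sum_i\norm{\prm_i^t-\Bprm^t}^2=\frac1n\norm{\CSE{t}}_F^2$, and resolving the growth term in A\ref{ass:SecOrdMom} through $\norm{\prm_i^t-\thps}^2\leq2\norm{\prm_i^t-\Bprm^t}^2+2\norm{\Tprm^t}^2$, I obtain, after being deliberately generous with the numerical constants,
\[
\textstyle \frac1n\EE_t\norm{\bm{G}^t(\bm{I}-\bm{J})}_F^2 \;\leq\; 9(\sigma^2+\varsigma^2) \;+\; c_3\,\frac1n\norm{\CSE{t}}_F^2 \;+\; 12(\sigma^2+\varsigma^2)\norm{\Tprm^t}^2 ,
\]
where $c_3=12\sigma^2+18L^2(1+\epsilon_{\sf max})^2$ is chosen exactly so that it dominates the coefficient of the self-referential $\frac1n\norm{\CSE{t}}_F^2$ term coming from parts (i)--(ii). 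Substituting this into the recursion above (after dividing by $n$) leaves a term $\frac{c_3\gamma_{t+1}^2}{\rho}\cdot\frac1n\norm{\CSE{t}}_F^2$, and the step-size cap $\sup_{t\geq0}\gamma_{t+1}\leq\rho/\sqrt{2c_3}$ is precisely what makes this at most $\frac{\rho}{2}\cdot\frac1n\norm{\CSE{t}}_F^2$; folding it into the contraction factor replaces $1-\rho$ by $1-\rho/2$ and produces \eqref{lem:consens_eq}.

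The step I expect to be delicate is dealing with the \emph{state-dependent} bounds A\ref{ass:SecOrdMom}--A\ref{ass:hete}: because the stochastic-gradient second moment now carries $\norm{\prm_i^t-\thps}^2$ rather than a uniform constant, this term must be decomposed into a consensus-error piece and a $\norm{\Tprm^t}^2$ piece, and the consensus-error piece cannot simply be discarded — it has to be reabsorbed into the $1-\rho$ contraction, which is the origin of both the step-size restriction $\gamma_{t+1}\leq\rho/\sqrt{2c_3}$ and the somewhat inflated constant $c_3$. A secondary bookkeeping point is that Young's inequality must be applied before taking $\EE_t$, so that the cross term between the contracted consensus error and the \emph{biased} gradient matrix $\bm{G}^t$ is handled without any appeal to a vanishing conditional mean.
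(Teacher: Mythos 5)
Your proof is correct and follows essentially the same route as the paper's: the same error recursion $\CSE{t+1}=({\bm W}-\frac{1}{n}{\bf 1}{\bf 1}^\top)\CSE{t}-\gamma_{t+1}({\bm I}-\frac{1}{n}{\bf 1}{\bf 1}^\top)\Tgrd F^t$, the same Young's inequality with parameter $\rho/(1-\rho)$, the same three-way split of the gradient deviation into noise (A\ref{ass:SecOrdMom}), drift (Lemma~\ref{lem: grd_dev}) and heterogeneity (A\ref{ass:hete}), and the same absorption of the state-dependent $\norm{\CSE{t}}_F^2$ term into the contraction via the cap $\gamma_{t+1}\leq\rho/\sqrt{2c_3}$. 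The one harmless deviation is that you center the gradient matrix at the single reference $\grd f(\Bprm^t;\Bprm^t)$ using the projection property of ${\bm I}-\frac{1}{n}{\bf 1}{\bf 1}^\top$, whereas the paper expands the deviation from the empirical gradient average directly; your version actually yields slightly smaller constants, which you then legitimately inflate to match $c_3$ and the stated coefficients.
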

The proof is in \S\ref{appendix:lem_consens}. In \eqref{eq:cons_noA6} of the appendix, we provide an alternative consensus error bound without using A\ref{ass:hete}. Note that despite the decision dependent distributions due to the performative nature of {\aname}, the above bound shows a similar trend as in \citep{yes_topology, pu2021sharp, kong2021consensus,koloskova2019decentralized}.

However, unlike \citep[Lemma 2]{yes_topology}, the r.h.s.~of \eqref{lem:consens_eq} contains a ${\cal O}( [ \sigma^2 + \varsigma^2 ] \gamma_{t+1}^2\normtxt{\Tprm^t}^2 )$ term which arises from A\ref{ass:SecOrdMom}, A\ref{ass:hete} with the growth condition. This introduces new challenges to analysis as it will be insufficient to conclude from \eqref{lem:consens_eq} \emph{alone} that {\bname} converges to a consensual solution. 

Our plan is to consider Lemmas~\ref{lem:descent} and \ref{lem: consens} simultaneously in order to control $\EE \normtxt{\Tprm^t}^2$, $\EE \normtxt{ \CSE{t} }_F^2$. Define the following sequence of non-negative numbers: for any $t \geq 0$, 
\beq
\textstyle {\cal L}_{t+1} \eqdef \EE \big[ \normtxt{\Tprm^{t+1}}^2 +  \gamma_{t+1} \frac{8c_1}{\rho n} \norm{\CSE{t+1}}_F^2 \big].
\eeq 
We obtain the following lemma:
\begin{lemma}[{\bf Convergence of ${\cal L}_t$}]\label{lem:lya}
Under A\ref{ass: strongcvx}--A\ref{ass:hete}. Suppose that the step sizes satisfy $\sup_{t \geq 0} \gamma_{t+1} \leq \min \left\{ \frac{4}{\tmu},  \sqrt{ \frac{ \rho^2 \tmu }{ 192 c_1 ( \sigma^2 + \varsigma^2 ) } } , \frac{ \rho c_1 }{ 4 \tmu c_1 + \rho c_2 } \right\}$. For any $t \geq 0$, it holds 
\beq \label{eq:lem_lya_part0}
{\cal L}_{t+1} \leq (1 - \tmu \gamma_{t+1} / 2) \, {\cal L}_t + \rho^{-2}  72 c_1 (\sigma^2 + \varsigma^2)  \gamma_{t+1}^3 +  n^{-1} 2 \sigma^2 \gamma_{t+1}^2.
\eeq 
Further, if the step sizes satisfy $\frac{ \gamma_{t-1} }{ \gamma_{t} } \leq \min\{ \sqrt{1 + (\tmu/4) \gamma_t^2}, \sqrt[3]{ 1 + (\tmu/4) \gamma_t^3 } \}$ for any $t \geq 1$, then 
\beq \label{eq:lem_lya_fin}
\EE \left[ \norm{\Tprm^{t+1}}^2 +  \gamma_{t+1} \frac{8c_1}{\rho n} \norm{\CSE{t+1}}_F^2\right] \leq \prod_{i=1}^{t+1} \left( 1 - \frac{\tmu \gamma_i}{ 2 } \right) {\sf D} + \frac{ 288 c_1 ( \sigma^2 + \varsigma^2 ) }{ \rho^2 \tmu } \gamma_{t+1}^2 + \frac{ 8 \sigma^2 }{ \tmu n } \gamma_{t+1},
\eeq 
where we recall that ${\sf D} := \normtxt{\Tprm^0}^2 + \gamma_1 \frac{8c_1}{\rho n} \norm{\CSE{0}}_F^2$.
\end{lemma}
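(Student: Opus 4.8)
The lemma concerns a pair of \emph{mutually coupled} recursions: Lemma~\ref{lem:descent} bounds $\EE_t\norm{\Tprm^{t+1}}^2$ in terms of $\norm{\Tprm^t}^2$ together with the consensus error $\tfrac1n\norm{\CSE{t}}_F^2$, while Lemma~\ref{lem: consens} bounds $\EE_t[\tfrac1n\norm{\CSE{t+1}}_F^2]$ in terms of $\tfrac1n\norm{\CSE{t}}_F^2$ together with $\norm{\Tprm^t}^2$ --- the latter entering with a $\gamma_{t+1}^2$ prefactor that is an artifact of the growth conditions in A\ref{ass:SecOrdMom} and A\ref{ass:hete}. Since neither inequality contracts on its own, the plan is to track the single weighted quantity ${\cal L}_t$, whose weight $\gamma_t\,\tfrac{8c_1}{\rho n}$ on the consensus error is tuned precisely so that the feedback terms between the two recursions cancel against the contraction factors. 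This is the standard Lyapunov device for DSGD-type analyses (cf.~\citep{pu2021sharp, yes_topology}); the only genuinely new difficulty is the $\gamma_{t+1}^2\norm{\Tprm^t}^2$ feedback.

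\emph{Step 1 (the one-step bound \eqref{eq:lem_lya_part0}).} I would add $\gamma_{t+1}\tfrac{8c_1}{\rho}$ times the conditional bound of Lemma~\ref{lem: consens} to the conditional bound of Lemma~\ref{lem:descent} and take total expectation. The coefficient of $\norm{\Tprm^t}^2$ on the right then equals $1-\tmu\gamma_{t+1}+\tfrac{96 c_1(\sigma^2+\varsigma^2)}{\rho^2}\gamma_{t+1}^3$, which is at most $1-\tmu\gamma_{t+1}/2$ exactly when $\gamma_{t+1}\le\sqrt{\rho^2\tmu/(192 c_1(\sigma^2+\varsigma^2))}$; this is the step where the $\gamma_{t+1}^2\norm{\Tprm^t}^2$ feedback gets demoted to a higher-order perturbation of $\tmu\gamma_{t+1}\norm{\Tprm^t}^2$. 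The coefficient of $\tfrac1n\norm{\CSE{t}}_F^2$ equals $\tfrac{8c_1\gamma_{t+1}}{\rho}-3c_1\gamma_{t+1}+c_2\gamma_{t+1}^2$; using $\gamma_{t+1}\le\gamma_t$ together with $\gamma_{t+1}\le\rho c_1/(4\tmu c_1+\rho c_2)$ --- which gives $\gamma_{t+1}(c_2+4c_1\tmu/\rho)\le c_1$ and hence $c_2\gamma_{t+1}^2-3c_1\gamma_{t+1}\le-4 c_1\tmu\gamma_{t+1}^2/\rho$ --- one checks this coefficient is at most $\gamma_t\,\tfrac{8c_1}{\rho}(1-\tmu\gamma_{t+1}/2)$, i.e.~$(1-\tmu\gamma_{t+1}/2)$ times the consensus weight carried by ${\cal L}_t$. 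The residual additive terms are $\tfrac{2\sigma^2}{n}\gamma_{t+1}^2$ from Lemma~\ref{lem:descent} and $\gamma_{t+1}\tfrac{8c_1}{\rho}\cdot 9(\sigma^2+\varsigma^2)\tfrac{\gamma_{t+1}^2}{\rho}=\tfrac{72 c_1(\sigma^2+\varsigma^2)}{\rho^2}\gamma_{t+1}^3$ from Lemma~\ref{lem: consens}, which are exactly the two terms displayed in \eqref{eq:lem_lya_part0}. The cap $\gamma_{t+1}\le 4/\tmu$ keeps the contraction factor $1-\tmu\gamma_{t+1}/2$ bounded below by $-1$, and the step-size caps of the two sub-lemmas ($\gamma_{t+1}\le\tmu/c_2$ and $\gamma_{t+1}\le\rho/\sqrt{2c_3}$, both implied by \eqref{eq:stepsizecond}) are needed just to invoke them.

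\emph{Step 2 (unrolling to \eqref{eq:lem_lya_fin}).} Writing $A:=\tfrac{72 c_1(\sigma^2+\varsigma^2)}{\rho^2}$, $B:=\tfrac{2\sigma^2}{n}$ and $\phi_{k,\tau}:=\prod_{i=k+1}^{\tau}(1-\tmu\gamma_i/2)$, iterating \eqref{eq:lem_lya_part0} gives ${\cal L}_{t+1}\le\phi_{0,t+1}\,{\sf D}+\sum_{k=1}^{t+1}\phi_{k,t+1}\bigl(A\gamma_k^3+B\gamma_k^2\bigr)$; the leading term is exactly $\prod_{i=1}^{t+1}(1-\tmu\gamma_i/2)\,{\sf D}$ because at $t=0$ one retains the coefficient $\gamma_1\tfrac{8c_1}{\rho}(1-\tmu\gamma_1/2)$ on $\tfrac1n\norm{\CSE{0}}_F^2$ directly, which reproduces ${\sf D}=\norm{\Tprm^0}^2+\gamma_1\tfrac{8c_1}{\rho n}\norm{\CSE{0}}_F^2$ (no $\gamma_0$ is ever referenced). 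It then remains to establish the two standard summation estimates $\sum_{k=1}^{t+1}\phi_{k,t+1}\gamma_k^3\le\tfrac{4}{\tmu}\gamma_{t+1}^2$ and $\sum_{k=1}^{t+1}\phi_{k,t+1}\gamma_k^2\le\tfrac{4}{\tmu}\gamma_{t+1}$, which is where the step-size ratio conditions $\gamma_{k-1}/\gamma_k\le\sqrt[3]{1+(\tmu/4)\gamma_k^3}$ and $\gamma_{k-1}/\gamma_k\le\sqrt{1+(\tmu/4)\gamma_k^2}$ enter: using $\phi_{k-1,t+1}=(1-\tmu\gamma_k/2)\phi_{k,t+1}$ they make $k\mapsto\phi_{k,t+1}\gamma_k^2$ and $k\mapsto\phi_{k,t+1}\gamma_k$ ``almost monotone'', so the weighted sums telescope (the cap $\gamma_1\le 4/\tmu$ handling the $k=1$ boundary term; for a constant step size these collapse to geometric-series bounds). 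Substituting, the accumulated terms become $\tfrac{4A}{\tmu}\gamma_{t+1}^2=\tfrac{288 c_1(\sigma^2+\varsigma^2)}{\rho^2\tmu}\gamma_{t+1}^2$ and $\tfrac{4B}{\tmu}\gamma_{t+1}=\tfrac{8\sigma^2}{\tmu n}\gamma_{t+1}$, and since ${\cal L}_{t+1}$ is by definition the left-hand side of \eqref{eq:lem_lya_fin}, the claim follows.

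\emph{Anticipated obstacle.} The delicate point is Step~1: the \emph{single} weight $\gamma_t\,\tfrac{8c_1}{\rho n}$ must simultaneously (i) let the consensus contraction $1-\rho/2$ dominate the $c_1\gamma_{t+1}$ feedback that Lemma~\ref{lem:descent} injects into the consensus error, and (ii) keep the $\gamma_{t+1}^3\norm{\Tprm^t}^2$ feedback --- absent under the uniform-variance assumptions of, e.g., \citep{lian2017decentralized} but unavoidable here because of the ${\cal O}(1+\norm{\prm-\thps}^2)$ growth in A\ref{ass:SecOrdMom}, A\ref{ass:hete} --- strictly higher order than $\tmu\gamma_{t+1}\norm{\Tprm^t}^2$. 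Reconciling (i) and (ii) is what pins down the somewhat unusual collection of step-size caps in the hypothesis; once those caps and the ratio conditions are in force, the telescoping in Step~2 is routine.
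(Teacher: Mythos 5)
Your proposal is correct and follows essentially the same route as the paper: form ${\cal L}_{t+1}$ by adding $\gamma_{t+1}\tfrac{8c_1}{\rho}$ times the consensus bound of Lemma~\ref{lem: consens} to the descent bound of Lemma~\ref{lem:descent}, absorb the $\tfrac{96c_1(\sigma^2+\varsigma^2)}{\rho^2}\gamma_{t+1}^3$ feedback and the $\tfrac{8c_1}{\rho}\gamma_{t+1}-3c_1\gamma_{t+1}+c_2\gamma_{t+1}^2$ consensus coefficient into $(1-\tmu\gamma_{t+1}/2)\,{\cal L}_t$ under exactly the stated step-size caps, then unroll and apply the summation estimate (the paper's Lemma~\ref{lem:aux} with $a=\tmu/2$, $p=1,2$) to get the $\tfrac{4}{\tmu}\gamma_{t+1}$ and $\tfrac{4}{\tmu}\gamma_{t+1}^2$ factors. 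The coefficient bookkeeping and the resulting constants match the paper's proof in \S\ref{app:lem_lya}.
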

The proof is in \S\ref{app:lem_lya}, where \eqref{eq:lem_lya_part0} is based on a careful combination of Lemmas~\ref{lem:descent} and \ref{lem: consens}, and \eqref{eq:lem_lya_fin} is computed from the non-asymptotic analysis for the recursion \eqref{eq:lem_lya_part0}.\vspace{.1cm}

\textbf{Proof of Theorem~\ref{thm1}.}
Lemma~\ref{lem:lya} immediately leads to \eqref{eq:mainthm} of the theorem as the l.h.s.~of \eqref{eq:lem_lya_fin} is lower bounded by $\EE \normtxt{ \Tprm^{t+1} }^2 $.  
To obtain \eqref{eq:cseerror}, we observe from the simplifying the r.h.s.~of \eqref{eq:lem_lya_fin} that 
\beq 
\begin{split}
\sup_{t \geq 1} \EE \norm{\Tprm^t}^2 & \leq {\sf D} + 
\frac{ 288 c_1 ( \sigma^2 + \varsigma^2 ) }{ \rho^2 \tmu } \gamma_1^2 + \frac{ 8 \sigma^2 }{ \tmu n } \gamma_1 \leq {\sf D} + \frac{3}{2} + \frac{8 \sigma^2}{ c_2 n } =: \overline{\Delta}.
\end{split}
\eeq 
Note that $\normtxt{\Tprm^0}^2 \leq \overline{\Delta}$ as well.
Substituting the above into Lemma~\ref{lem: consens} yields
\beq 
\begin{aligned} 
{\textstyle \frac{1}{n}} \EE \norm{ \CSE{t+1} }_F^2 & \leq \left( 1 - {\rho} / {2} \right) {\textstyle \frac{1}{n}} \EE \norm{ \CSE{t} }_F^2 + \rho^{-1} (9 + 12 \overline{\Delta}) (\sigma^2+\varsigma^2) \gamma_{t+1}^2 \\
& \leq \left( 1 - \frac{\rho}{2} \right)^{t+1} {\textstyle \frac{1}{n}} \norm{ \CSE{0} }_F^2 + \frac{ (9 + 12 \overline{\Delta}) (\sigma^2+\varsigma^2) }{\rho} \sum_{s=1}^{t+1} \left( 1 - \frac{\rho}{2} \right)^{t+1-s} \gamma_{s}^2
\end{aligned}
\eeq 
Applying Lemma~\ref{lem:aux2} in the appendix together with the step size condition $\gamma_t / \gamma_{t+1} \leq 1 + \rho / (4-2\rho)$, $t \geq 1$, leads to \eqref{eq:cseerror} of the theorem. 
This concludes our proof. \hfill $\square$\vspace{-.2cm}

\section{Numerical Experiments}\label{sec:num}\vspace{-.2cm}
We consider two examples of performative prediction problems to verify our theories. All experiments are conducted with Python on a server using 80 threads of an Intel Xeon 6318 CPU.\vspace{.1cm}

\textbf{Multi-agent Gaussian Mean Estimation.} We aim to illustrate Proposition~\ref{lem:exist}, Theorem~\ref{thm1} via a scalar Gaussian mean estimation problem on synthetic data. We consider $n=25$ agents connected on a ring graph, and the {\aname} problem \eqref{eq:multipfd} is specified with $\ell(\prm_i; Z_i) = (\prm_i - Z_{i})^2/2$. The local distributions are given by ${\cal D}_{i}(\prm_i)\equiv {\cal N}(\bar{z}_i + \epsilon_i \prm_i, \sigma^2)$, where $\bar{z}_{i}$ is the mean value to be estimated. 
For this problem, we have $\mu = 1$, $L=1$, as such if $0< \bar{\epsilon} = \epsilon_{\sf avg} < 1$, the Multi-PS solution can be computed in closed form as $\thps  = \sum_{i=1}^{n}\bar{z}_i / [n(1-\epsilon_{\sf avg})]$; while $\thps$ does not exist if $\epsilon_{\sf avg} \geq 1$. 

In our experiments, we set $\bar{z}_{i} = 10$, $\sigma^2 = 50$ and step size for {\bname} as $\gamma_{t} = {a_0}/{(a_1 + t)}$ with $a_0 = 50$, $a_1 = 10^4$. 
In Fig.~\ref{fig:me_plot}, we compare the gap $\normtxt{\Bprm^t - \thps}^2$, consensus error $\normtxt{\CSE{t}}^2$, expected performative risk $f(\Bprm^t; \Bprm^t)$ of \eqref{eq:multipfd}, against the iteration number $t$. We examine the behavior of {\bname} when the {\aname} problem has an averaged sensitivity parameter of $\epsilon_{\sf avg} \in \{ 0.9, 1.01, 1.05, 1.1 \}$ and under a heterogeneous, decision-dependent distribution environment where $\epsilon_i$ are distinct.

We first observe from Fig. \ref{fig:me_plot} (left) and (middle) that when $\epsilon_{\sf avg} = 0.9 < 1$, the gap $\normtxt{\Bprm^t - \thps}^2$ decays at ${\cal O}(1/t)$ as $t \to \infty$, while the consensus error $\normtxt{\CSE{t}}^2$ decays at ${\cal O}(1/t^2)$. This coincides exactly with Theorem~\ref{thm1}. 
Furthermore, in Fig.~\ref{fig:me_plot} (right), we simulate a setting when one of the agents with $\epsilon_i = 1.01$ is always disconnected from the network and perform the greedy deployment scheme \emph{individually}. We observe from the figure that its performative risk $f_i( \prm_i^t ; \prm_i^t )$ diverges as $t \to \infty$. This indicates that consensus can help stabilize the system. 

Lastly, from Fig.~\ref{fig:me_plot} (middle) and (right), we observe that whenever $\epsilon_{\sf avg} > 1$, the consensus error and performative risk diverge. Again, this corroborates with our Proposition~\ref{lem:exist}.\vspace{.1cm}

\begin{figure}
\centering
\includegraphics[width=.985\textwidth]{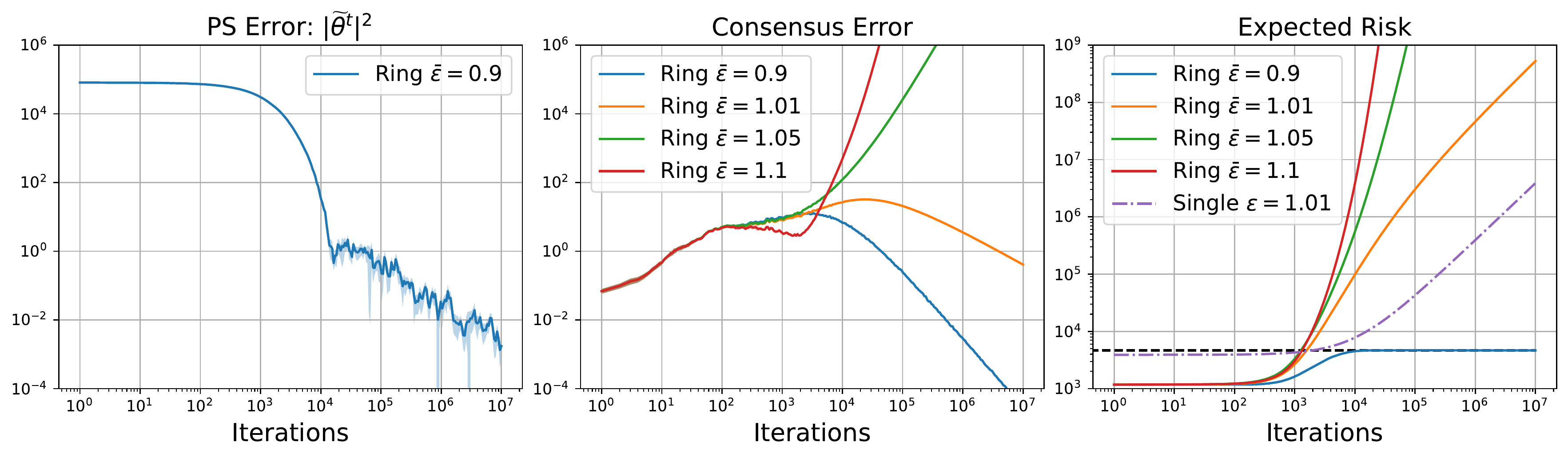}\vspace{-.2cm}
\caption{\textbf{Multi-agent Gaussian Mean Estimation.} 
(Left) Gap to Multi-PS solution. Note that $\thps$ does not exist if $\epsilon_{\sf avg} \geq 1$ and the plots are thus skipped. (Middle) Consensus error $\normtxt{ \CSE{t} }_F^2$. (Right) Performative risk \eqref{eq:perfrisk}.  Results are averaged over 10 runs and shaded area is 90\% confidence interval.}\vspace{-.2cm}
\label{fig:me_plot}
\end{figure}

\textbf{Email Spam Classification.} {We evaluate the performance of {\tt DSGD-GD} by simulating the performative effects on a real dataset}. This example is a multi-agent spam classification task based on {\tt spambase}, a dataset \citep{spambase94} with $m=4601$ samples, $d=48$ features. We adopt Example~\ref{example1} and simulate a scenario with 25 regional servers on a ring graph. Each server has access to training data from $m_i=138$ samples from {\tt spambase} modeling the different set of users; the rest of $m_{\sf train} = 1150$ samples are taken as testing data. The servers aim to find a common \emph{spam filter classifier} via \eqref{eq:log_loss} with $\beta=10^{-4}$. To model the strategic behavior of users, their features ${\bm X}_i$ are adapted to $\prm_i$ through maximizing a linear utility function, resulting in the shifted distribution ${\cal D}_i(\prm_i)$ specified in \eqref{eq:perf_quad}. The sensitivity parameters are set as $\epsilon_i \in \{0.4 \epsilon_{\sf avg}, 0.45 \epsilon_{\sf avg}, \dots, 1.6 \epsilon_{\sf avg}\} $ with $\bar{\epsilon} = \epsilon_{\sf avg} \in \{ 0.01, 0.1, 1\}$.\vspace{-.1cm}

Our results are shown in Fig.~\ref{fig:email_spam} as we compare the gradient $\|\grd f( \Bprm^t ; \Bprm^t )\|^2$ evaluated on the training dataset, the consensus error $\normtxt{\CSE{t}}_F^2$, and the accuracy on the testing dataset, against the iteration number $t$. From Fig.~\ref{fig:email_spam} (left) and (middle), we observe that the {\bname} scheme converges to the Multi-PS solution and reaches consensus under various settings of $\epsilon_{\sf avg}$, at the rates ${\cal O}(1/t), {\cal O}(1/t^2)$, respectively. 
In Fig.~\ref{fig:email_spam} (right), we evaluate the performance of the trained classifier $\prm_i^t$ on the testing dataset with shifted distribution due to $\prm_i^t$. We compare with a non-performatively trained solution obtained by solving $\prm^\star = \argmin_{ \prm } \sum_{i=1}^n f_i ( \prm ; {\bm 0} )$ ({\tt DSGD} in the legend), i.e., without any shift in distributions, but evaluate the performance on distribution shifted by $\prm^\star$.  As observed, the test accuracy decreases as sensitivity $\epsilon_{\sf avg}$ increases, and {\bname} achieves better accuracy than {\tt DSGD}. 

\begin{figure}
    \centering
    \includegraphics[width=.985\textwidth]{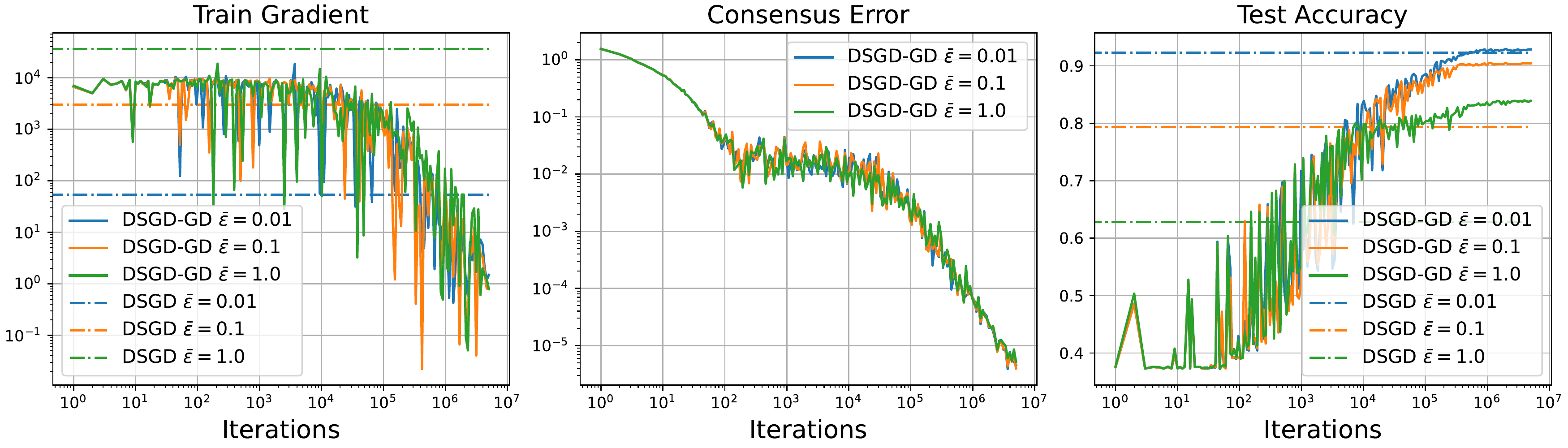}\vspace{-.2cm}
    \caption{\textbf{Spam Email Classification.} 
    (Left) Gradient on training dataset $\|\grd f( \Bprm^t ; \Bprm^t )\|^2$. Note that $\grd f( \thps; \thps ) = {\bm 0}$ and thus the gradient norm measures the gap to $\thps$. (Middle) Consensus Error. (Right) Test accuracy with shifted distributions. We also compare the non-performative optimal solution (dashed lines, {\tt DSGD} in the legend) on the shifted dataset.}\vspace{-.3cm}
    \label{fig:email_spam}
\end{figure}



\textbf{Conclusions \& Limitations.} In this paper, we studied the {\aname} problem, and analyzed its stability when a {\bname} scheme is applied. Our results indicate that when agents are \emph{consensus seeking}, {\aname} admits a performative stable solution with laxer condition and {\bname} achieves linear speedup. Limitations to our current results include the requirement of synchronous updates among agents, strongly convex loss [cf.~A\ref{ass: strongcvx}], etc., which shall be explored in future extensions.


\newpage
\bibliographystyle{plainnat}
\bibliography{ecl}

\newpage

\newpage

\appendix 

\section{Proof of Proposition~\ref{lem:exist}} \label{app:exist}

Fix any $\prm^\prime, \prm \in \RR^d$. The optimality condition to \eqref{eq:map_M} implies that 
\beq \textstyle 
    \sum_{i=1}^{n}\grd  f_{i}({\cal M}(\prm); \prm) = {\bm 0}, \qquad
    \sum_{i=1}^{n}\grd f_{i}({\cal M}(\prm^{\prime}); \prm^{\prime}) = {\bm 0}.
\eeq 
Note that the gradients are taken w.r.t.~the first argument in the function $f_i$.
Observe the chain 
\begin{align*}
        0 & = \pscal{ {\bm 0} }{ {\cal M}( \prm ) - {\cal M} (\prm^\prime) } =  \Pscal{\sum_{i=1}^{n}\left[ \grd f_{i}(\mathcal{M}(\prm); \prm) - \grd f_{i}({\cal M}(\prm^\prime); \prm^{\prime})\right] }{\mathcal{M}(\prm)-{\cal M}(\prm^{\prime})}.    
\end{align*}
Adding and subtracting $\sum_{i=1}^n \grd f_{i}({\cal M}(\prm); \prm^{\prime})$ implies the equality:
\beq \label{eq:equality_stable}
\begin{aligned}
    & \textstyle \sum_{i=1}^{n}\Pscal{ \grd f_i(\mathcal{M}(\prm); \prm^\prime) - \grd f_i(\mathcal{M}(\prm) ; \prm) }{ \mathcal{M}(\prm) - {\cal M}(\prm^{\prime})}
    \\
    &= \textstyle 
    \sum_{i=1}^{n}\Pscal{ \left( \grd f_i(\mathcal{M}(\prm) ; \prm^\prime)-\grd f_i(\mathcal{M}(\prm^\prime) ; \prm^\prime) \right) }{ \mathcal{M}(\prm)-{\cal M}(\prm^{\prime})}.
\end{aligned}
\eeq 
Applying A\ref{ass: strongcvx} to the right hand side of \eqref{eq:equality_stable} lead to:
\beqq 
\sum_{i=1}^{n}\Pscal{ \left( \grd f_i(\mathcal{M}(\prm) ; \prm^\prime)-\grd f_i(\mathcal{M}(\prm^\prime) ; \prm^\prime) \right) }{ \mathcal{M}(\prm)-{\cal M}(\prm^{\prime})} \geq n \mu \| {\cal M}( \prm ) - {\cal M} ( \prm^\prime ) \|^2.
\eeqq 
Meanwhile, applying 
Lemma \ref{lem: grd_dev} to the left hand side of \eqref{eq:equality_stable} gives
\beqq
\begin{aligned}
& \textstyle \sum_{i=1}^{n}\Pscal{ \grd f_i(\mathcal{M}(\prm); \prm^\prime) - \grd f_i(\mathcal{M}(\prm) ; \prm) }{ \mathcal{M}(\prm) - {\cal M}(\prm^{\prime})} \\
& \textstyle \leq \sum_{i=1}^n \epsilon_i L \| \prm^\prime - \prm \| \| {\cal M}( \prm ) - {\cal M} ( \prm^\prime ) \|.
\end{aligned}
\eeqq
Substituting back into \eqref{eq:equality_stable} implies that 
\beq\label{eq:contraction}
\| {\cal M}( \prm ) - {\cal M} ( \prm^\prime ) \| \leq \frac{ \sum_{i=1}^n \epsilon_i L }{ n \mu } \| \prm - \prm^\prime \| = \frac{ \epsilon_{\sf avg} L }{ \mu } \| \prm - \prm^\prime \|.
\eeq
Therefore, the map ${\cal M}: \RR^d \to \RR^d$ is a contraction if $\epsilon_{\sf avg} < \mu / L$.
Subsequently, by the Banach fixed point theorem \citep{granas2003fixed}, the map ${\cal M}(\prm)$ admits a unique fixed point which is denoted as $\thps$.


To prove the converse, we consider the following instantiation of \eqref{eq:map_M} with
\beq 
\ell( \theta; Z ) = \frac{1}{2} ( \theta - Z )^2, \quad Z \sim {\cal D}_i( \theta ) \Longleftrightarrow Z \sim {\cal N}( \mu_i + \epsilon_i \theta , 1 )
\eeq 
Note that the above satisfies A\ref{ass: strongcvx} with $\mu = 1$, A\ref{ass: smooth} with $L=1$, A\ref{ass:sensitive} with $\epsilon_i$ for $i=1,\ldots,n$. 
We consider a case where it holds $\epsilon_{\sf avg} \geq \mu/L = 1$. We also let $\mu_{\sf avg} := (1/n) \sum_{i=1}^n \mu_i \neq 0$. 

We observe
\beq 
\begin{aligned}
f_i ( \theta' ; \theta ) & = \EE_{ Z \sim {\cal D}_i( \theta) } \left[ \frac{1}{2} ( \theta' - Z)^2 \right] = \EE_{ \tilde{Z} \sim {\cal N}(0,1) } \left[ \frac{1}{2} ( \theta' - \mu_i - \epsilon_i \theta - \tilde{Z} )^2 \right] \\
& = \frac{1}{2}  ( \theta' - \mu_i - \epsilon_i \theta )^2 + \frac{1}{2}.
\end{aligned}
\eeq 
For any $\theta \in \RR$, it can be shown that
\beq 
{\cal M}( \theta ) = \argmin_{ \theta' \in \RR } ~\frac{1}{2n} \sum_{i=1}^n ( \theta' - \mu_i - \epsilon_i \theta )^2 = \epsilon_{\sf avg} \theta + 
\mu_{\sf avg}
\eeq
Thus, applying the map for $T$ times leads to 
\beq 
{\cal M}^T( \theta ) = \epsilon_{\sf avg}^T \theta + \left( 1 + \epsilon_{\sf avg} + \cdots + \epsilon_{\sf avg}^{T-1} \right) \mu_{\sf avg}
\eeq 
Since $\epsilon_{\sf avg} > 1$ and $\mu_{\sf avg} \neq 0$, we have $\lim_{T \to \infty} |{{\cal M}^T( \theta )}| = \infty$ and the map is not a contraction.

\section{\bf Proof of Lemma \ref{lem:descent}}\label{appendix:lem_des}
Recall that $\Tprm^t := \Bprm^t - \prm^{PS}$ is the error of averaged decision at the $t$th iteration. Using \eqref{eq:dsgd_avg}, we have
\begin{align}\label{rec1}
    \norm{ \Tprm^{t+1} }^2 &= \norm{ \Tprm^{t} }^2 - \frac{2\gamma_{t+1}}{n} \Pscal{\Tprm^t}{ \sum_{i=1}^{n}\grd \ell(\prm_i^t; Z_i^{t+1})} + \frac{\gamma_{t+1}^2}{n^2} \norm{\sum_{i=1}^{n}\grd \ell(\prm_i^t; Z_i^{t+1})}^2.
\end{align}
We consider taking the conditional expectation $\EE_{t}[\cdot]$ on the both sides. Using the fixed point condition $\sum_{i=1}^{n}\grd f_i(\thps; \thps) = {\bm 0}$, we observe the following equivalent expression for the last term 
\beq \notag
\norm{\sum_{i=1}^{n} \grd \ell(\prm_i^t; Z_i^{t+1})}^2 = \norm{ \sum_{i=1}^{n} \big[ \grd \ell(\prm_i^t; Z_i^{t+1}) - \grd f_i( \prm_i^t; \prm_i^t ) + \grd f_i( \prm_i^t; \prm_i^t ) - \grd f_i( \thps; \thps)  \big] }^2
\eeq 
Observe that $Z_i^{t+1}$, $i=1,\ldots,n$ are independent r.v.s, taking the conditional expectation $\EE_t[\cdot]$ yields the upper bound to the above term
\beq \label{eq:lem3key1}
\begin{aligned}
& \EE_t \norm{\sum_{i=1}^{n} \grd \ell(\prm_i^t; Z_i^{t+1})}^2 \\
& \leq  2 \sum_{i=1}^n \EE_t \norm{ \grd \ell(\prm_i^t; Z_i^{t+1}) - \grd f_i( \prm_i^t; \prm_i^t ) }^2 + 2n \sum_{i=1}^n \norm{ \grd f_i( \prm_i^t; \prm_i^t ) - \grd f_i( \thps; \thps) }^2 \\
& \leq 2 \sum_{i=1}^n \sigma^2 ( 1 + \norm{ \prm_i^t - \thps }^2 ) + 2n \sum_{i=1}^n L^2 (1 + \epsilon_i)^2 \norm{ \prm_i^t - \thps }^2 \\
& \leq 2 \sigma^2 n + 4n [ \sigma^2 + n L^2 ( 1 + \epsilon_{\sf max} )^2 ] \norm{ \Tprm^t }^2 + 4 [ \sigma^2 + n L^2 ( 1 + \epsilon_{\sf max} )^2 ] \norm{ \CSE{t} }_F^2  
\end{aligned}
\eeq 
where the first inequality is due to A\ref{ass:SecOrdMom} and Lemma~\ref{lem: grd_dev}.
We conclude that
\beq \label{eq:ub} 
\begin{aligned}
& \frac{1}{n^2} \EE_t \norm{\sum_{i=1}^{n}\grd \ell(\prm_i^t; Z_i^{t+1})}^2 \leq 
\frac{ 2 \sigma^2 }{n} + c_2 \norm{ \Tprm^t }^2 + c_2 \frac{1}{n} \norm{ \CSE{t} }_F^2  
\end{aligned}
\eeq 
where we recall the definition that $c_2 = 4 \left( \frac{\sigma^2}{n} + L^2( 1 + \epsilon_{\sf max} )^2 \right)$.


Next, we focus on the inner product term in \eqref{rec1}, we have
\beq 
\begin{aligned}
\Pscal{ \Tprm^t }{ \sum_{i=1}^{n} \grd f_i(\prm_i^t, \prm_i^t) } & = \sum_{i=1}^{n} \Pscal{\Tprm^t}{\grd f_i(\prm_i^t; \prm_i^t)-\grd f_i(\Bprm^t; \thps)} \\
&\quad + \sum_{i=1}^{n} \Pscal{\Tprm^t }{\grd f_i(\Bprm^t; \thps)-\grd f_i(\thps; \thps)} 
\end{aligned}
\eeq 
Applying the Cauchy-Schwarz inequality and A\ref{ass: smooth}, A\ref{ass:sensitive}, we obtain 
\begin{align} 
& \sum_{i=1}^{n} \Pscal{\Tprm^t}{\grd f_i(\prm_i^t; \prm_i^t)-\grd f_i(\Bprm^t; \thps)} \geq -\norm{\Tprm^t } \sum_{i=1}^{n}\left( L\norm{\prm_i^t-\Bprm^t}+L\varepsilon_i \norm{\prm_i^t-\thps}\right) \notag \\
& \geq - \norm{\Tprm^t } \sum_{i=1}^{n}\left( L (1 + \epsilon_i) \norm{\prm_i^t-\Bprm^t}+ L \epsilon_i \norm{\Tprm^t} \right) . \label{eq:lem3key2}
\end{align}
Meanwhile, using the strong convexity property of $\ell(\cdot; \cdot)$ [cf.~A\ref{ass: strongcvx}], we have
\beq 
\sum_{i=1}^{n} \Pscal{\Tprm^t }{\grd f_i(\Bprm^t; \thps)-\grd f_i(\thps; \thps)} \geq n\mu \norm{ \Tprm^t }^2.
\eeq
Summing up the two lower bounds and rearranging terms give
\begin{align}
\frac{1}{n} \EE_t \Pscal{ \Tprm^t }{ \sum_{i=1}^{n}\grd f_i(\prm_i^t, \prm_i^t) } 
& \geq (\mu - L \epsilon_{\sf avg} ) \norm{\Tprm^t }^2 - \frac{L}{n} (1+\epsilon_{\sf max} ) \sum_{i=1}^{n} \norm{\Tprm^t }\norm{\prm_i^t - \Bprm^t} .
\end{align}

For any $\alpha > 0$, using the Young's inequality shows that the above can be further lower bounded by
\beq \label{eq:lb}
\begin{aligned}
& \left[ \mu - L\epsilon_{\sf avg} - \frac{\alpha}{2n} L(1+\epsilon_{\sf max}) \right] \norm{\Tprm^t }^2 - \frac{ L(1+\epsilon_{max}) }{2n\alpha} \sum_{i=1}^{n}\norm{\prm_i^t-\Bprm^t}^2
\\
& \geq \left[\mu-L\epsilon_{avg}-\frac{\alpha}{2n}L(1+\epsilon_{\sf max}) \right] \norm{ \Tprm^t }^2 - \frac{L(1+\epsilon_{\sf max})}{2n\alpha} \norm{\CSE{t}}_F^2  \\
& \geq \left[\mu-(1+\delta)L\epsilon_{\sf avg} \right] \norm{ \Tprm^t }^2 - \frac{L(1+\epsilon_{max})^2 }{ 4n^2 \delta \epsilon_{\sf avg} } \norm{\CSE{t}}_F^2,  
\end{aligned}
\eeq 
where we have set $\alpha=\frac{2n \delta \epsilon_{\sf avg}}{1+\epsilon_{\sf max}}$ to yield the last inequality.

Substituting \eqref{eq:ub}, \eqref{eq:lb} back to the inequality \eqref{rec1} gives us the desired result. In particular,
\beq 
\begin{aligned}
\EE_t \norm{ \Tprm^{t+1} }^2 & \leq \norm{ \Tprm^t }^2 - 2 \gamma_{t+1} \left[ \left[\mu-(1+\delta) L \epsilon_{\sf avg} \right] \norm{ \Tprm^t }^2 - \frac{L(1+\epsilon_{max})^2 }{ 4n^2 \delta \epsilon_{\sf avg} } \norm{\CSE{t}}_F^2 \right] \\
& \quad + \gamma_{t+1}^2 \left[ \frac{ 2 \sigma^2 }{n} + c_2 \norm{ \Tprm^t }^2 + c_2 \frac{1}{n} \norm{ \CSE{t} }_F^2 \right] \\
& = \left( 1 - 2\tmu \gamma_{t+1} + c_2 \gamma_{t+1}^2 \right)\norm{\Tprm^t}^{2} + \left[ c_1 \frac{\gamma_{t+1}}{n} + c_2 \frac{\gamma_{t+1}^2}{n}\right] \norm{\CSE{t}}^2_F + \frac{2\sigma^2}{n}\gamma_{t+1}^2 \\
& \leq \left( 1 - \tmu \gamma_{t+1} \right)\norm{\Tprm^t}^{2} + \left[ c_1 \frac{\gamma_{t+1}}{n} + c_2 \frac{\gamma_{t+1}^2}{n}\right] \norm{\CSE{t}}^2_F + \frac{2\sigma^2}{n}\gamma_{t+1}^2
\end{aligned}
\eeq
where we recall the constants $c_1 \eqdef \frac{L(1+\epsilon_{\sf max})^2}{2{n}\delta \epsilon_{\sf avg}}$, $c_2 \eqdef 4 \left( \frac{ \sigma^2 }{n} + L^2 ( 1 + \epsilon_{\sf max} )^2 \right)$ and $\tmu \eqdef \mu - (1+\delta)\epsilon_{\sf avg} L$ and the last inequality is obtained by observing the condition $\gamma_{t+1} \leq \tmu / c_2$.


\section{\bf Proof of Lemma \ref{lem: consens}} \label{appendix:lem_consens}
To simplify notations, we denote
\beq \label{eq:grdtF_def}
\begin{aligned}
    \Tgrd F^t \eqdef \left( \grd \ell(\prm_1^t; Z_1^{t+1}) , \cdots, \grd \ell(\prm_n^t; Z_n^{t+1}) \right)^\top \in \RR^{n\times d},
    \\
    \Prm^t \eqdef \big( \prm_1^t, \cdots, \prm_n^t \big)^\top \in \RR^{n\times d},~ \BPrm^t \eqdef (1/n) {\bf 1}{\bf 1}^\top \Prm^t \in \RR^{n} .
\end{aligned}
\eeq 
Notice that $\CSE{t} = \Prm^t - \BPrm^t = ( {\bm I} - (1/n) {\bf 1}{\bf 1}^\top ) \Prm^t$.
We first observe the following relation:
\begin{align} 
\CSE{t+1} & = \Prm^{t+1}-\BPrm^{t+1} = \left({\bm I}-\frac{1}{n}{\bm 1}{\bm 1}^\top \right) \Prm^{t+1} = \left({\bm I}-\frac{1}{n}{\bm 1}{\bm 1}^\top \right) \left({\bm W}\Prm^t -\gamma_{t+1}\Tgrd F^t\right) \nonumber  \\
&= \left({\bm W}-\frac{1}{n}{\bm 1}{\bm 1}^\top \right) \CSE{t} - \gamma_{t+1} \left(  {\bm I} - \frac{1}{n} {\bf 1}{\bf 1}^\top \right) \Tgrd F^t, \nonumber
\end{align}
where the last equality is due to $( {\bm I} - (1/n) {\bf 1}{\bf 1}^\top ) {\bm W} = ( {\bm W} - (1/n) {\bf 1}{\bf 1}^\top ) ( {\bm I} - (1/n) {\bf 1}{\bf 1}^\top )$ as ${\bm W}$ is a doubly stochastic matrix.

Computing the squared norm of the consensus error leads to: for any $\alpha > 0$,
\begin{align}
\EE_t \norm{\CSE{t+1}}^2_F & \leq (1+\alpha)(1-\rho)^2 \norm{\CSE{t}}_F^2 + (1+\frac{1}{\alpha})\gamma_{t+1}^2 \EE_t \norm{ \left(  {\bm I} - \frac{1}{n} {\bf 1}{\bf 1}^\top \right) \Tgrd F^t}_F^2 \nonumber \\
& \leq (1-\rho)\norm{\CSE{t}}_F^2 + \frac{\gamma_{t+1}^2}{\rho} \EE_t \norm{ \left(  {\bm I} - \frac{1}{n} {\bf 1}{\bf 1}^\top \right) \Tgrd F^t}_F^2, \label{eq3}
\end{align}
where we have applied A\ref{ass: graph} in the first inequality and set $\alpha = \frac{\rho}{1-\rho}$ in the second inequality. 
The last term in the above inequality can be bounded as
\begin{eqnarray}\label{eq2}
\begin{aligned}
&\EE_t \norm{ \left( {\bm I} - \frac{1}{n} {\bf 1}{\bf 1}^\top \right) \Tgrd F^t}_F^2 = \EE_t \left[ \sum_{i=1}^{n}\norm{\grd \ell(\prm_i^t; Z_i^{t+1}) - \frac{1}{n}\sum_{j=1}^{n}\grd \ell(\prm_j^t; Z_j^{t+1})}^2 \right] \\
& \leq 3 \sum_{i=1}^{n}\EE_t\norm{\grd \ell(\prm_i^t; Z_i^{t+1})-\grd f_i(\prm_i^t, \prm_i^t ) }^2 + \frac{3}{n}  \sum_{j=1}^{n} \EE_t\norm{ \grd \ell(\prm_j^t; Z_j^{t+1}) - \grd f_j(\prm_j^t, \prm_j^t) }^2 \\
& \quad + 3 \sum_{i=1}^{n}\norm{\grd f_i(\prm_i^t, \prm_i^{t}) - \frac{1}{n}\sum_{j=1}^{n}\grd f_j(\prm_j^t, \prm_j^t)}^2 \\
& \leq 6 \sigma^2 \left(n+\sum_{i=1}^{n}\norm{\prm_i^t-\thps}^2\right)
+ 3 \sum_{i=1}^{n}\norm{\grd f_i(\prm_i^t, \prm_i^{t}) - \frac{1}{n}\sum_{j=1}^{n}\grd f_j(\prm_j^t, \prm_j^t)}^2 \\
& \leq 6 \sigma^2 \left(n  + 2n \norm{\Tprm^t}^2 + 2 \norm{\CSE{t}}_F^2 \right)
+ 3 \sum_{i=1}^{n}\norm{\grd f_i(\prm_i^t, \prm_i^{t}) - \frac{1}{n}\sum_{j=1}^{n}\grd f_j(\prm_j^t, \prm_j^t)}^2
\end{aligned}
\end{eqnarray}
where the second last inequality is due to A\ref{ass:SecOrdMom}. 
For each $i = 1, \ldots, n$, we observe 
\begin{align}
& \norm{\grd f_i(\prm_i^t,\prm_i^t) \!-\! \grd f_i(\Bprm^t, \Bprm^t) \!+\!\grd f_i(\Bprm^t, \Bprm^t)\!-\!\frac{1}{n}\sum_{j=1}^{n}\grd f_j(\Bprm^t, \Bprm^t) \!-\! \frac{1}{n}\sum_{j=1}^{n}[\grd f_j(\prm_j^t,\prm_j^t)\!-\!\grd f_j(\Bprm^t, \Bprm^t)]}^2 \notag \\
&\leq 3 \norm{\grd f_i(\prm_i^t,\prm_i^t) - \grd f_i(\Bprm^t, \Bprm^t)}^2 + 3 \norm{\grd f_i(\Bprm^t, \Bprm^t)-\frac{1}{n}\sum_{j=1}^{n}\grd f_j(\Bprm^t, \Bprm^t)}^2 \label{eq:noA6bp} \\
& \quad + \frac{3}{n}\sum_{j=1}^{n}\norm{\grd f_j(\prm_j^t,\prm_j^t)-\grd f_j(\Bprm^t, \Bprm^t)}^2 \notag \\
& \leq 3 \norm{\grd f_i(\prm_i^t,\prm_i^t) - \grd f_i(\Bprm^t, \Bprm^t)}^2 + \frac{3}{n}\sum_{j=1}^{n}\norm{\grd f_j(\prm_j^t,\prm_j^t)-\grd f_j(\Bprm^t, \Bprm^t)}^2 + 3 \varsigma^2 \left( 1 + \norm{ \Tprm^t }^2 \right) \notag
\end{align}
where the last inequality is due to A\ref{ass:hete}. Now, we observe 
\begin{align*}
& \sum_{i=1}^{n} \norm{\grd f_i(\prm_i^t, \prm_i^{t}) - \frac{1}{n}\sum_{j=1}^{n}\grd f_j(\prm_j^t, \prm_j^t)}^2 \leq 6 \sum_{i=1}^n \norm{\grd f_i(\prm_i^t,\prm_i^t) - \grd f_i(\Bprm^t, \Bprm^t)}^2 + 3n \varsigma^2 \left( 1 + \norm{ \Tprm^t }^2 \right) \\
& \leq 6L^2(1+ {\epsilon_{\sf max}} )^2 \norm{\CSE{t}}_F^2 + 3n \varsigma^2 \left(1+\norm{\Tprm^t}^2\right)
\end{align*}
where the second inequality is due to Lemma \ref{lem: grd_dev} and the definition of $\CSE{t}$. 

Substituting the above bounds into \eqref{eq2} leads to
\beq\label{eq:a}
\begin{aligned}
& \EE_t \norm{ \left( {\bm I} - \frac{1}{n} {\bf 1}{\bf 1}^\top \right) \Tgrd F^t}_F^2 \\
& \leq 6 \sigma^2 \left(n  + 2n \norm{\Tprm^t}^2 + 2 \norm{\CSE{t}}_F^2 \right) + 18 L^2(1+ {\epsilon_{\sf max}} )^2 \norm{\CSE{t}}_F^2 + 9 n \varsigma^2 \left( 1 + \norm{\Tprm^t}^2\right) \\
& \leq 9n [ \sigma^2 + \varsigma^2 ] + 12 n [ \sigma^2 + \varsigma^2 ] \norm{\Tprm^t}^2 + \left[ 12 \sigma^2 + 18L^2(1+ \epsilon_{\sf max})^2 \right] \norm{ \CSE{t} }_F^2 
\end{aligned}
\eeq 
Let $c_3 := 12 \sigma^2 + 18L^2(1+ \epsilon_{\sf max})^2$. 
Substituting the above inequality into \eqref{eq3} gives us
\begin{align*}
\EE_t \norm{\CSE{t+1}}^2_F & \leq 
(1-\rho)\norm{\CSE{t}}_F^2 + \frac{ \gamma_{t+1}^2 }{\rho } \left( 12n [ \sigma^2 + \varsigma^2 ]  \norm{\Tprm^t}^2 + c_3 \norm{ \CSE{t} }_F^2 \right)  
+ 9n(\sigma^2+\varsigma^2)\frac{\gamma_{t+1}^2}{\rho} \\
& \leq 
(1-\rho/2)\norm{\CSE{t}}_F^2 + \frac{ \gamma_{t+1}^2 }{\rho } 12n [ \sigma^2 + \varsigma^2 ] \norm{\Tprm^t}^2 
+ 9n(\sigma^2+\varsigma^2)\frac{\gamma_{t+1}^2}{\rho},
\end{align*}
where the last inequality is due to the step size condition $\gamma_{t+1}^2 \leq \rho^2 / 2c_3$.
The proof is concluded. 

\paragraph{Alternative Bound without A\ref{ass:hete}}
We consider bounding \eqref{eq:noA6bp} without using A\ref{ass:hete}. Instead, we only assume that $\max_{i=1,\ldots,n} \norm{ \grd f_i( \thps; \thps ) }^2 \leq \varsigma^2$. We observe 
\beq 
\begin{aligned}
& \norm{\grd f_i(\Bprm^t, \Bprm^t)- \grd f(\Bprm^t, \Bprm^t)}^2 \leq 2 \norm{ \grd f_i ( \thps; \thps ) }^2 \\
& \qquad \qquad + 2 \norm{ \grd f_i(\Bprm^t, \Bprm^t) - \grd f_i( \thps; \thps ) + \grd f( \thps; \thps ) - \grd f( \Bprm^t; \Bprm^t ) }^2 \\
& \leq 2 \norm{ \grd f_i ( \thps; \thps ) }^2 + 8 L^2 \left( 1 + \epsilon_{\sf max} \right)^2 \norm{ \Tprm^t }^2
\leq 2 \varsigma^2 + 8 L^2 \left( 1 + \epsilon_{\sf max} \right)^2 \norm{ \Tprm^t }^2,
\end{aligned}
\eeq 
for all $i=1,\ldots,n$.
This leads to
\begin{align*}
& \textstyle \sum_{i=1}^{n} \norm{\grd f_i(\prm_i^t, \prm_i^{t}) - \frac{1}{n}\sum_{j=1}^{n}\grd f_j(\prm_j^t, \prm_j^t)}^2 \\
& \leq 6L^2(1+ {\epsilon_{\sf max}} )^2 \norm{\CSE{t}}_F^2 + 2 n \varsigma^2 + 8 n L^2 \left( 1 + \epsilon_{\sf max} \right)^2 \norm{ \Tprm^t }^2.
\end{align*}
Subsequently, 
\beq 
\begin{aligned}
& \textstyle \EE_t \norm{ \left( {\bm I} - \frac{1}{n} {\bf 1}{\bf 1}^\top \right) \Tgrd F^t}_F^2 \\
& \leq 6 \sigma^2 \left(n  + 2n \norm{\Tprm^t}^2 + 2 \norm{\CSE{t}}_F^2 \right) + 6 n \varsigma^2 + 6L^2 (1+\epsilon_{\sf max})^2 \left( 3 \norm{\CSE{t}}_F^2 + 4 n \norm{ \Tprm^t }^2 \right) \\
& = 6 n [ \sigma^2 + \varsigma^2 ] + 12n \left[ \sigma^2 + 2 L^2 \left( 1 + \epsilon_{\sf max} \right)^2 \right] \norm{\Tprm^t}^2 + \left[ 12 \sigma^2 + 18L^2(1+ \epsilon_{\sf max})^2 \right] \norm{ \CSE{t} }_F^2 
\end{aligned}
\eeq
Taking $c_3 := 12 \sigma^2 + 18L^2(1+ \epsilon_{\sf max})^2$ as before and substituting the inequality into \eqref{eq3} yields
\begin{align}
& \EE_t \norm{\CSE{t+1}}^2_F \notag \\
& \leq 
(1-\rho)\norm{\CSE{t}}_F^2 + \frac{ \gamma_{t+1}^2 }{\rho } \left( 12n \left[ \sigma^2 + 2 L^2 \left( 1 + \epsilon_{\sf max} \right)^2 \right]  \norm{\Tprm^t}^2 + c_3 \norm{ \CSE{t} }_F^2 \right)  
+ 6n(\sigma^2+\varsigma^2)\frac{\gamma_{t+1}^2}{\rho} \notag \\
& \leq 
(1-\rho/2)\norm{\CSE{t}}_F^2 + \frac{ \gamma_{t+1}^2 }{\rho } 12n \left[ \sigma^2 + 2 L^2 \left( 1 + \epsilon_{\sf max} \right)^2 \right] \norm{\Tprm^t}^2 + 6n(\sigma^2+\varsigma^2)\frac{\gamma_{t+1}^2}{\rho}, \notag 
\end{align}
where the last inequality is due to $\sup_{t \geq 1} \gamma_t \leq \rho / \sqrt{2 c_3}$.
The above can be simplified into 
\beq \label{eq:cons_noA6}
{\textstyle \frac{1}{n}} \EE_t \norm{\CSE{t+1}}^2_F \leq 
\left(1- \frac{\rho}{2} \right) {\textstyle \frac{1}{n}} \norm{\CSE{t}}_F^2 + \frac{ \gamma_{t+1}^2 }{\rho } 12 \left[ \sigma^2 + 2 L^2 \left( 1 + \epsilon_{\sf max} \right)^2 \right] \norm{\Tprm^t}^2 + 6(\sigma^2+\varsigma^2)\frac{\gamma_{t+1}^2}{\rho}.
\eeq 
Compared to \eqref{lem:consens_eq}, we observe that the above bound entails a larger coefficient for $\normtxt{\Tprm^t}^2$ which lead to a (slightly) worse convergence bound for the {\bname} scheme.

Lastly, we should mention that as in the original Lemma~\ref{lem: consens}, \eqref{eq:cons_noA6} can also be combined with Lemma~\ref{lem:descent} to develop an alternate version of Lemma~\ref{lem:lya}. Subsequently, we can achieve a similar result as Theorem~\ref{thm1} without assuming A\ref{ass:hete}.

\allowdisplaybreaks
\section{Proof of Lemma~\ref{lem:lya}}\label{app:lem_lya}
Combining Lemmas~\ref{lem:descent} and \ref{lem: consens} leads to
\begin{align*} 
{\cal L}_{t+1} & \leq ( 1 - \tmu \gamma_{t+1}) \, \EE \norm{\Tprm^t}^{2} + \left[c_1 \gamma_{t+1} + c_2 \gamma_{t+1}^2 \right] {\textstyle \frac{1}{n}} \EE \norm{\CSE{t}}^2_F + \frac{2\sigma^2}{n}\gamma_{t+1}^2 \\
& \quad + \gamma_{t+1}  \frac{ 8 c_1 }{ \rho } \left( \left(1- \frac{\rho}{2} \right) {\textstyle \frac{1}{n}} \EE \norm{\CSE{t}}_F^2 + \frac{ \gamma_{t+1}^2 }{\rho } 12 [ \sigma^2 + \varsigma^2 ] \EE \norm{\Tprm^t}^2 
+ 9 (\sigma^2+\varsigma^2)\frac{\gamma_{t+1}^2}{\rho} \right) \\
& = \left( 1 - \tmu \gamma_{t+1} + \frac{96 c_1}{\rho^2} [\sigma^2 + \varsigma^2] \gamma_{t+1}^3 \right) \EE \norm{ \Tprm^t }^2 + \frac{2\sigma^2}{n}\gamma_{t+1}^2 + \frac{ 72 c_1 }{ \rho^2 } (\sigma^2+\varsigma^2) \gamma_{t+1}^3 \\
& \quad + \gamma_t \frac{8 c_1}{\rho} \left( \frac{ \gamma_{t+1} }{ \gamma_t } \left( 1 - \frac{\rho}{2} \right) + \frac{\rho}{8} + \frac{c_2 \rho}{8 c_1} \gamma_{t+1} \right) {\textstyle \frac{1}{n}} \EE \norm{\CSE{t}}_F^2
\end{align*} 
Note that by the step size conditions specified in the lemma, we have 
\beq 
\begin{split}
& 1 - \tmu \gamma_{t+1} + \frac{96 c_1}{\rho^2} [\sigma^2 + \varsigma^2] \gamma_{t+1}^3 \leq 1 - \tmu \gamma_{t+1} / 2 \\
& \frac{ \gamma_{t+1} }{ \gamma_t } \left( 1 - \frac{\rho}{2} \right) + \frac{\rho}{8} + \frac{c_2 \rho}{8 c_1} \gamma_{t+1} \leq 1 - \tmu \gamma_{t+1} / 2.
\end{split} 
\eeq 
Thus, we obtain
\beq \label{eq:lem_lya_part1}
\begin{aligned}
{\cal L}_{t+1} & \leq (1 - \tmu \gamma_{t+1}/2) {\cal L}_t 
+ \frac{2\sigma^2}{n}\gamma_{t+1}^2 + \frac{ 72 c_1 }{ \rho^2 } (\sigma^2+\varsigma^2) \gamma_{t+1}^3 .
\end{aligned}
\eeq 
This concludes the first part of the lemma, i.e., \eqref{eq:lem_lya_part0}. For the second part, we further expand \eqref{eq:lem_lya_part1} to obtain 
\beq 
\begin{aligned}
{\cal L}_{t+1} & \leq \prod_{i=1}^{t+1} \left( 1 - \frac{\tmu \gamma_i}{ 2 } \right) {\sf D} + \sum_{s=1}^{t+1} \prod_{i=s+1}^{t+1} (1 - \tmu \gamma_i/2) \left( \frac{2\sigma^2}{n}\gamma_{s}^2 + \frac{ 72 c_1 }{ \rho^2 } (\sigma^2+\varsigma^2) \gamma_{s}^3 \right) \\
& \leq \prod_{i=1}^{t+1} \left( 1 - \frac{\tmu \gamma_i}{ 2 } \right) {\sf D} + \frac{ 288 c_1 ( \sigma^2 + \varsigma^2 ) }{ \rho^2 \tmu } \gamma_{t+1}^2 + \frac{ 8 \sigma^2 }{ \tmu n } \gamma_{t+1}.
\end{aligned}
\eeq 
where we recall that ${\sf D} := \normtxt{\Tprm^0}^2 + \frac{8 \gamma_1 c_1}{\rho n} \norm{ \CSE{0} }_F^2$ and the last inequality is due to Lemma~\ref{lem:aux} together with the specified step size conditions. 
The proof is thus concluded.

\section{Auxilliary Results}
\begin{lemma}\label{lem:aux}
Consider a sequence of non-negative, non-increasing step sizes $\{\gamma_{t}\}_{t \geq 1}$. Let $a>0$, $p\in \ZZ_+$ and $\gamma_{1}<2 / a$. If $\gamma_{t}^p / \gamma_{t+1}^p \leq 1+(a / 2) \gamma_{t+1}^p$ for any $t \geq 1$, then
\beq 
\sum_{j=1}^{t} \gamma_{j}^{p+1} \prod_{\ell=j+1}^{t}\left(1-\gamma_{\ell} a\right) \leq \frac{2}{a} \gamma_{t}^p,~~\forall~t \geq 1.
\eeq 
\end{lemma}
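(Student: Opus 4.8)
I would prove the bound by induction on $t$, with the only extra bookkeeping being that I keep track of $S_t := \sum_{j=1}^{t}\gamma_j^{p+1}\prod_{\ell=j+1}^{t}(1-\gamma_\ell a)\ge 0$ (automatic as long as each $1-a\gamma_\ell>0$, which is the regime of interest). The engine is the one-step identity
$S_{t+1} = (1-a\gamma_{t+1})\,S_t + \gamma_{t+1}^{p+1}$,
obtained by peeling off the $j=t+1$ summand (which equals $\gamma_{t+1}^{p+1}$, the product being empty) and pulling the common factor $1-a\gamma_{t+1}$ out of the remaining terms. For the base case, $S_1=\gamma_1^{p+1}=\gamma_1\cdot\gamma_1^{p}<\tfrac2a\gamma_1^{p}$ since $\gamma_1<2/a$.

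\textbf{Inductive step.} Assume $0\le S_t\le\tfrac2a\gamma_t^{p}$ and split on the sign of $1-a\gamma_{t+1}$. If $1-a\gamma_{t+1}>0$ (the case occurring under the step-size conditions of Theorem~\ref{thm1}, where $a\gamma_\ell$ is well below $1$), one may multiply the hypothesis through by $1-a\gamma_{t+1}\ge0$ and add $\gamma_{t+1}^{p+1}$, so it suffices to check $(1-a\gamma_{t+1})\tfrac2a\gamma_t^{p}+\gamma_{t+1}^{p+1}\le\tfrac2a\gamma_{t+1}^{p}$. Rearranging gives $(1-a\gamma_{t+1})\gamma_t^{p}\le\gamma_{t+1}^{p}\bigl(1-\tfrac a2\gamma_{t+1}\bigr)$, and inserting the step-size ratio hypothesis in the form $\gamma_t^{p}\le\gamma_{t+1}^{p}\bigl(1+\tfrac a2\gamma_{t+1}^{p}\bigr)$ and collecting terms reduces everything to the scalar inequality $\gamma_{t+1}^{\,p-1}(1-a\gamma_{t+1})\le1$; for $p=1$ the left side is just $1-a\gamma_{t+1}<1$, and for the values $p\ge2$ actually used it holds because the step sizes are small. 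In the degenerate case $1-a\gamma_{t+1}\le0$ one instead discards the now-nonpositive term $(1-a\gamma_{t+1})S_t$ (using $S_t\ge0$), leaving $S_{t+1}\le\gamma_{t+1}^{p+1}<\tfrac2a\gamma_{t+1}^{p}$.

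\textbf{Main obstacle.} The delicate point is the bookkeeping in the case $1-a\gamma_{t+1}>0$: one must carry the \emph{full} factor-of-two slack in $\tfrac2a\gamma_t^{p}$ so that the cross terms generated by $(1-a\gamma_{t+1})\bigl(1+\tfrac a2\gamma_{t+1}^{p}\bigr)$ are exactly absorbed by the slack $\tfrac2a\gamma_{t+1}^{p}-\gamma_{t+1}^{p+1}$ on the right, and one must remember that the induction hypothesis can only be multiplied through when $1-a\gamma_{t+1}\ge0$, which forces the sign case-split. A telescoping alternative that avoids repeatedly invoking the hypothesis is to write $\gamma_j^{p+1}=\tfrac1a\gamma_j^{p}\bigl(1-(1-a\gamma_j)\bigr)$ and Abel-sum, turning $S_t$ into $\tfrac1a\bigl[\gamma_t^{p}-\gamma_1^{p}P_1+\sum_{k=2}^{t}(\gamma_{k-1}^{p}-\gamma_k^{p})P_k\bigr]$ with $P_k:=\prod_{\ell=k}^{t}(1-a\gamma_\ell)$, after which $|P_k|\le1$, monotonicity of $\{\gamma_k^{p}\}$, and the ratio bound $\gamma_{k-1}^{p}-\gamma_k^{p}\le\tfrac a2\gamma_k^{2p}$ can be combined to control the partial sums; I would still use the induction as the primary route since it is the shorter one.
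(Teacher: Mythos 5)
Your argument is correct in the regime where the lemma is actually invoked, but it takes a genuinely different route from the paper's. The paper does not induct: it factors $\gamma_t^p$ out of the entire sum by writing $\prod_{\ell=j+1}^{t}\gamma_{\ell-1}^p/\gamma_\ell^p=\gamma_j^p/\gamma_t^p$, absorbs the ratio hypothesis factor-by-factor via $(\gamma_{\ell-1}^p/\gamma_\ell^p)(1-a\gamma_\ell)\le(1+\tfrac a2\gamma_\ell^p)(1-a\gamma_\ell)\le 1-\tfrac a2\gamma_\ell$, and then telescopes $\sum_{j}\gamma_j\prod_{\ell=j+1}^t(1-\tfrac a2\gamma_\ell)=\tfrac2a\bigl(1-\prod_{\ell=1}^t(1-\tfrac a2\gamma_\ell)\bigr)\le\tfrac2a$ exactly. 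Your induction through the recursion $S_{t+1}=(1-a\gamma_{t+1})S_t+\gamma_{t+1}^{p+1}$ lands on precisely the same pivotal scalar inequality $(1+\tfrac a2\gamma^p)(1-a\gamma)\le 1-\tfrac a2\gamma$, equivalently $\gamma^{p-1}(1-a\gamma)\le 1$, so the two proofs are equivalent in substance; yours makes it more explicit where that inequality is consumed and how the factor-of-two slack is spent, while the paper's version yields a closed-form bound in one pass with no invariant to carry. (Your Abel-summation sketch at the end is essentially the paper's computation.) Two caveats, shared with and no worse than the paper's own write-up: first, the reduction to $\gamma^{p-1}(1-a\gamma)\le1$ is automatic only for $p=1$, and for $p=2$ (also used when this lemma feeds into Lemma~\ref{lem:lya}) it needs an additional smallness condition such as $\gamma_\ell\le1$, which both proofs leave implicit; second, your degenerate branch $1-a\gamma_{t+1}\le0$ discards $(1-a\gamma_{t+1})S_t$ by invoking $S_t\ge0$, but non-negativity of $S_t$ is exactly what is no longer automatic once some factors $1-a\gamma_\ell$ turn negative, so that branch is not actually closed — the paper's proof has the mirror-image gap, since multiplying the ratio bound by $1-a\gamma_\ell$ and bounding a product factor-by-factor both silently require $1-a\gamma_\ell\ge0$. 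Under the step-size conditions of Theorem~\ref{thm1} (where $a\gamma_\ell$ is far below $1$) all of this is harmless.
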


\begin{proof}
Observe that:
\begin{align*}
\sum_{j=1}^{t} \gamma_{j}^{p+1} \prod_{\ell=j+1}^{t}\left(1-\gamma_{\ell} a\right) & =\gamma_{t}^p \sum_{j=1}^{t} \gamma_{j} \prod_{\ell=j+1}^{t} \frac{\gamma_{\ell-1}^p}{\gamma_{\ell}^p}\left(1-\gamma_{\ell} a\right) \\
& \overset{(a)}{\leq} \gamma_{t}^p \sum_{j=1}^{t} \gamma_{j} \prod_{\ell=j+1}^{t}\left(1-\gamma_{\ell} \frac{a}{2} \right) \\
&=\frac{2 \gamma_{t}^p}{a} \sum_{j=1}^{t} \left( \prod_{\ell=j+1}^{t} \left(1-\gamma_{\ell} a / 2\right) - \prod_{\ell^{\prime}=j}^{t} \left(1-\gamma_{\ell^{\prime}} a / 2\right)\right) \\
& = \frac{2 \gamma_{t}^p}{a} \left( 1 - \prod_{\ell^{\prime}=1}^{t} \left( 1-\gamma_{\ell^{\prime}} a / 2 \right) \right) \leq \frac{2 \gamma_{t}^p}{a},
\end{align*}
where (a) is due to the following observation
\[
\frac{\gamma_{\ell-1}^p}{\gamma_{\ell}^p}\left(1-\gamma_{\ell} a\right) 
\leq \left(1 + \frac{a}{2} \gamma_\ell^p \right) (1 - \gamma_\ell a ) \leq 1 - \frac{a}{2} \gamma_\ell .
\]
The proof is concluded.
\end{proof}

\begin{lemma}\label{lem:aux2}
Consider a sequence of non-negative, non-increasing step sizes $\{\gamma_{t}\}_{t \geq 1}$. Let $p \in \ZZ^+$. If $\sup_{t \geq 1}\gamma_{t}^p / \gamma_{t+1}^p \leq  1+ \frac{\rho}{4-2\rho}$, then for any $t \geq 0$, it holds that
\beq 
    \sum_{i=1}^{t+1}\left(1-\frac{\rho}{2} \right)^{t+1-i} \gamma_{i}^p \leq \frac{4}{\rho}\gamma_{t+1}^p.
\eeq 
\end{lemma}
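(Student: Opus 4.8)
The plan is to prove the bound by a direct telescoping argument on the geometric sum, exactly mirroring the technique used in Lemma~\ref{lem:aux}. First I would factor out $\gamma_{t+1}^p$ from the sum, writing
\[
\sum_{i=1}^{t+1}\left(1-\tfrac{\rho}{2}\right)^{t+1-i}\gamma_i^p
= \gamma_{t+1}^p \sum_{i=1}^{t+1} \frac{\gamma_i^p}{\gamma_{t+1}^p}\left(1-\tfrac{\rho}{2}\right)^{t+1-i}.
\]
Since the step sizes are non-increasing, the ratio $\gamma_i^p/\gamma_{t+1}^p$ telescopes as a product $\prod_{\ell=i+1}^{t+1}\gamma_{\ell-1}^p/\gamma_\ell^p$, and each factor is at most $1+\frac{\rho}{4-2\rho}$ by hypothesis. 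The key elementary inequality to establish is that $\bigl(1+\frac{\rho}{4-2\rho}\bigr)\bigl(1-\frac{\rho}{2}\bigr) \leq 1 - \frac{\rho}{4}$; a short computation shows the left side equals $\frac{4-\rho}{4-2\rho}\cdot\frac{2-\rho}{2} = \frac{(4-\rho)}{4} = 1-\frac{\rho}{4}$, so in fact equality holds and the bound is tight.

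Next I would use this to replace each term: $\frac{\gamma_i^p}{\gamma_{t+1}^p}\left(1-\frac{\rho}{2}\right)^{t+1-i} = \prod_{\ell=i+1}^{t+1}\frac{\gamma_{\ell-1}^p}{\gamma_\ell^p}\left(1-\frac{\rho}{2}\right) \leq \prod_{\ell=i+1}^{t+1}\left(1-\frac{\rho}{4}\right) = \left(1-\frac{\rho}{4}\right)^{t+1-i}$. Then the sum is bounded by a plain geometric series:
\[
\sum_{i=1}^{t+1}\left(1-\tfrac{\rho}{4}\right)^{t+1-i} \leq \sum_{j=0}^{\infty}\left(1-\tfrac{\rho}{4}\right)^{j} = \frac{4}{\rho},
\]
which, after multiplying back by $\gamma_{t+1}^p$, yields the claimed $\frac{4}{\rho}\gamma_{t+1}^p$. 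Alternatively one can telescope exactly as in Lemma~\ref{lem:aux}, writing $\left(1-\frac{\rho}{4}\right)^{t+1-i} = \frac{4}{\rho}\bigl[\left(1-\frac{\rho}{4}\right)^{t+1-i} - \left(1-\frac{\rho}{4}\right)^{t+2-i}\bigr]/1$... more cleanly, $\frac{\rho}{4}\sum_{i=1}^{t+1}\left(1-\frac{\rho}{4}\right)^{t+1-i} = 1 - \left(1-\frac{\rho}{4}\right)^{t+1} \leq 1$, giving the same conclusion.

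I do not expect any serious obstacle here; the only point requiring a moment of care is verifying the arithmetic identity $\bigl(1+\frac{\rho}{4-2\rho}\bigr)\bigl(1-\frac{\rho}{2}\bigr) = 1-\frac{\rho}{4}$ and checking that $\rho \in (0,1]$ guarantees $4-2\rho > 0$ so the hypothesis is well-posed and all factors are positive. The structure is otherwise identical to Lemma~\ref{lem:aux}, so the proof will be three or four lines of display math. One should also note that the statement is uniform in $p$ in the sense that $p$ enters only through the ratio condition, which is assumed directly, so no induction on $p$ or additional monotonicity argument beyond $\gamma_t$ non-increasing is needed.
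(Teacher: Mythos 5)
Your proposal is correct and follows essentially the same route as the paper's proof: factor out $\gamma_{t+1}^p$, write the ratio $\gamma_i^p/\gamma_{t+1}^p$ as a telescoping product of consecutive ratios, pair each factor with one copy of $(1-\rho/2)$ via the identity $\bigl(1+\tfrac{\rho}{4-2\rho}\bigr)\bigl(1-\tfrac{\rho}{2}\bigr) = 1-\tfrac{\rho}{4}$, and sum the resulting geometric series to get $4/\rho$. Your arithmetic verification of that identity is correct, and no gap remains.
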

\begin{proof}
We observe the following chain:
\begin{align*}
\sum_{i=1}^{t+1}\left(1-\frac{\rho}{2} \right)^{t+1-i} \gamma_{i}^p 
& = \gamma_{t+1}^p \sum_{i=1}^{t+1} \left(1-\frac{\rho}{2} \right)^{t+1-i} \left( \frac{\gamma_{i}}{\gamma_{i+1}} \right)^p \left( \frac{\gamma_{i+1}}{\gamma_{i+2}} \right)^p \cdots \left( \frac{\gamma_{t}}{\gamma_{t+1}} \right)^p \\
&\leq \gamma_{t+1}^p\sum_{i=1}^{t+1} (1-\frac{\rho}{4})^{t+1-i} 
\leq \frac{4}{\rho} \gamma_{t+1}^p
\end{align*}
where the second last inequality is due to:
\[
    (1-\rho/2)\left( \frac{\gamma_{i+1}}{\gamma_{i+2}}\right)^p \leq 1-\frac{\rho}{4} 
\]
since $\sup_{k\geq 1}\gamma_{k-1}^p / \gamma_{k}^p \leq 1+ \frac{\rho}{4-2\rho}$.
This completes the proof. 
\end{proof}

\section{Details of Numerical Experiments and Additional Results}
This section provides details for the numerical experiments conducted in \S\ref{sec:num}. We also describe an additional numerical experiment based on the logistic regression Example~\ref{example1} on synthetic data. The latter examines the effects of heterogeneous data on the convergence rate of {\bname}.

For all our experiments, we have performed {\bname} with the step size $\gamma_t = a_0 / (a_1 + t)$. Moreover, at each iteration of {\bname}, the $i$th agent draws ${\tt batch} \geq 1$ samples from ${\cal D}_i( \prm_i^t )$. The parameters $a_0 > 0, a_1 \geq 0, {\tt batch} \geq 1$ used for different tasks are specified in Table~\ref{tab:step}. For both Gaussian mean estimation and {\tt spambase} logistic regression, we use the same parameters for all settings of $\bar{\epsilon} = \epsilon_{\sf avg}$.
We consider using $n=25$ agents in all experiments, connected on a ring graph. We set the mixing matrix weights as $W_{ij} = 1/3$ for all $(i,j) \in E$, and $W_{ij} = 0$ if $(i,j) \notin E$.\vspace{.1cm}

\begin{table}[t]
\begin{center}
\begin{tabular}{ l l l l l } 
 \toprule 
 \textbf{Tasks} & $\bar{\epsilon} = \epsilon_{\sf avg}$ & $a_0$ & $a_1$ & {\tt batch} \\ 
  \midrule
 Gaussian Mean Estimation & see \S\ref{sec:num} & 50 & 10000 & 1 \\ 
 Spam Email Classification & see \S\ref{sec:num} & 50 & 100000 & 32 \\ 
  {\tt LEAF} Synthetic Data (Hetero \& Homo) & 0.1 & 200 & 1000 & 32 \\
  {\tt LEAF} Synthetic Data (Hetero \& Homo) & 10.0 & 1 & 1000 & 32 \\
  \bottomrule
\end{tabular}
\end{center}
\caption{Parameters for the numerical experiments.} \label{tab:step} 
\end{table}


\textbf{Spam Email Classification.} In Fig.~\ref{fig:email_spam_app}, we provide additional results for the experiment in the main paper [cf.~Fig.~\ref{fig:email_spam}]. In particular, we compare the training loss $f( \Bprm^t ; \Bprm^t )$ and training accuracy against the iteration number $t$. We also plot the  gap to an \emph{approximate} Multi-PS solution in Fig.~\ref{fig:email_spam_app} (right) for $\| \Bprm^t - \hat{\thps} \|^2$. Note that the Multi-PS solution compared here is only an approximation obtained by applying a similar method to repeated gradient descent in \citep{perdomo2020performative} on $\min_{\prm} \sum_{i=1}^n f_i ( \prm ; \prm )$, where we used 1000 gradient descent iterations together with an outer loop of $10^4$ deployments. Note that this process is only guaranteed to find a near-optimal solution, denoted as $\hat{\thps}$. Nevertheless, we observe that when the decision dependent distributions becomes more sensitive ($\epsilon_{\sf avg} = 1$), the {\bname} scheme seems unable to reach $\hat{\thps}$.\vspace{.1cm}

\begin{figure}
    \centering
    \includegraphics[width=.985\textwidth]{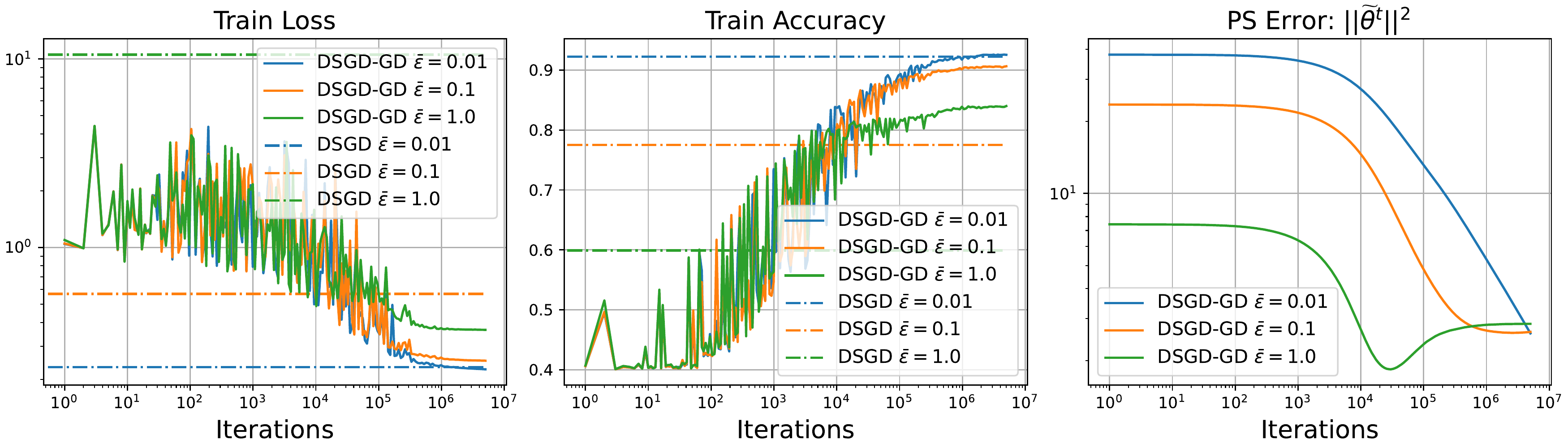}\vspace{-.2cm}
    \caption{\textbf{Additional Results for Spam Email Classification.} 
    (Left) Training Loss. (Middle) Training Accuracy. (Right) Approximate Gap to Multi-PS solution (see below). We also compare the non-performative optimal solution (dashed lines) on the shifted dataset.}\vspace{-.3cm}
    \label{fig:email_spam_app}
\end{figure}


\textbf{Logistic Regression on {\tt LEAF} Synthetic Data.}
To study the effect of homogeneity of data distribution [cf.~A\ref{ass:hete}] on the convergence of {\bname}, we conduct an additional experiment based on Example~\ref{example1} but on the {\tt LEAF} synthetic data \citep{caldas2019leaf}. Here, we set the sensitivity parameter at $\epsilon_i = \bar{\epsilon}$ for $i = 1, \ldots, 25$ and generate synthetic data using the framework in \citep{caldas2019leaf} with the standard deviation $\sigma = 1$ that represents the degree of heterogeneity of the dataset. 
Note that the framework produces $m_i = 100$ training samples with $d=100$ features for each agent, denoted as $({\bm X}_k^i, Y_k^i)_{k=1}^{100}$, for $i=1,\ldots,25$ agents. 

We consider two settings and describe them using the notations as in Example~\ref{example1}. In the {\tt heterogeneous} data setting, the base data distribution ${\cal D}_i^0$ for agent $i$ is taken to be $({\bm X}_k^i, Y_k^i)_{k=1}^{100}$ such that ${\cal D}_i ( \prm ) \neq {\cal D}_j( \prm )$. In the {\tt homogeneous} data setting, the base data distribution ${\cal D}_i^0$ for agent $i$ is taken to be $( ({\bm X}_k^i, Y_k^i)_{k=1}^{100} )_{i=1}^{25}$, i.e., the entire dataset generated from {\tt LEAF}. Note that in this case, ${\cal D}_i^0 \equiv {\cal D}_j^0$ and thus ${\cal D}_i ( \prm ) \equiv {\cal D}_j( \prm )$ for any $\prm \in \RR^d$ and $i,j = 1,\ldots, n$ since $\epsilon_i = \epsilon_{\sf avg}$. Note that the Multi-PS solution $\thps$ (if exists) in both settings are unique and identical. Meanwhile, the {\tt homogeneous} case satisfies A\ref{ass:hete} with $\varsigma = 0$, thus the {\bname} scheme applied to it is expected to converge at a faster rate than in the {\tt heterogeneous} case. 

Our numerical results are presented in Fig.~\ref{fig:leaf_app}, and we show in Table~\ref{tab:step} the simulation parameters. Observe that with $\epsilon_{\sf avg} = 10$, the local data distributions are too sensitive and the Multi-PS solution $\thps$ may not exist. With $\epsilon_{\sf avg} = 0.1$, we observe that the convergence of test accuracy, training loss, etc.~are faster with the {\tt homogeneous} case initially. However, as the iteration number $t$ grows, the gap between the {\tt homogeneous} and {\tt heterogeneous} cases fade. This corroborates with our finite-time analysis in \eqref{eq:net_ind}, where the fluctuation term $\sigma^2 \gamma_t / (n \tmu)$ becomes dominant as $t \gg 1$ in all cases, yet the transient time can be shorter when $\varsigma = 0$ as predicted by \eqref{eq:transient}.

\begin{figure}
    \centering
    \includegraphics[width=.985\textwidth]{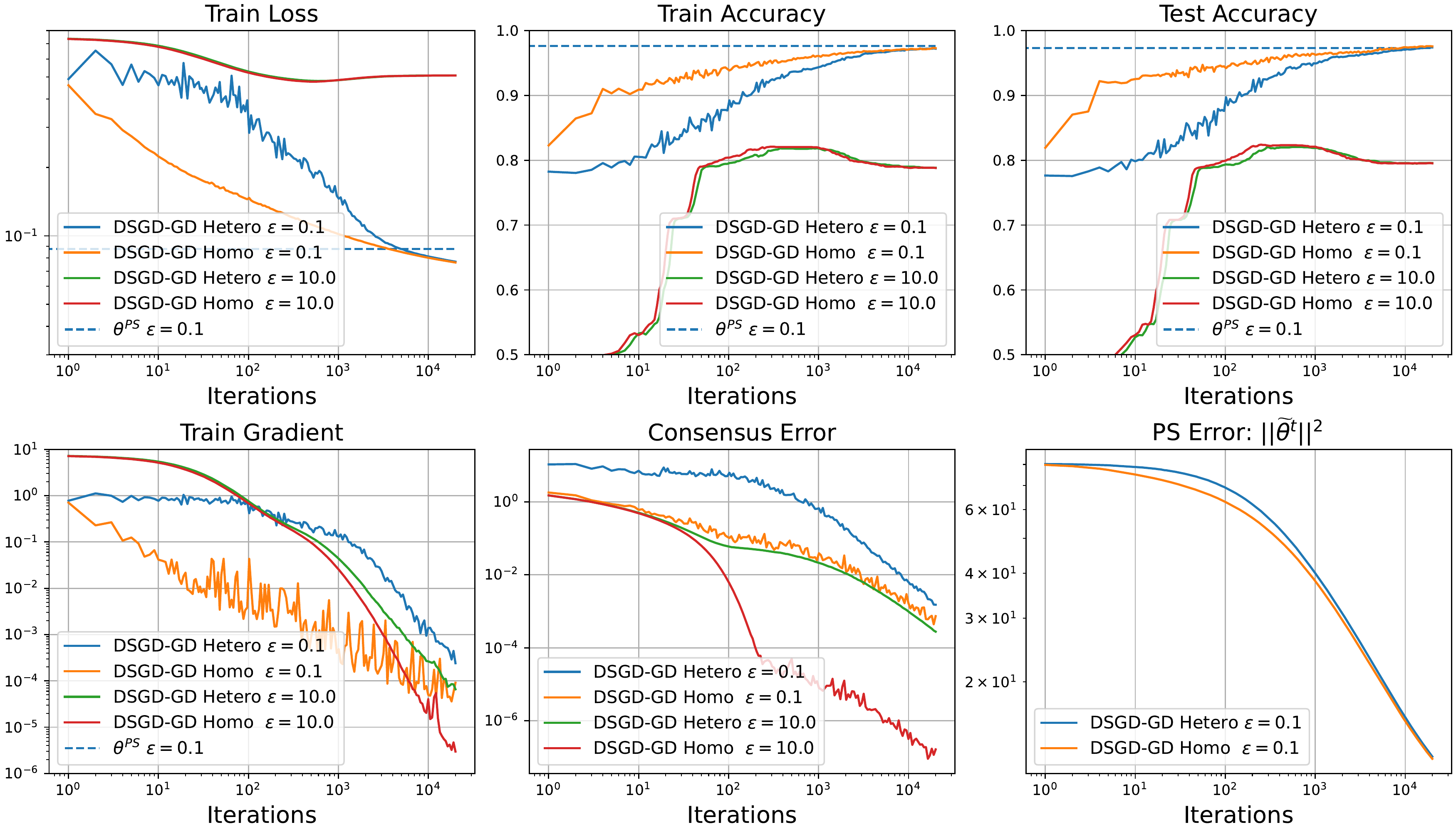}\vspace{-.2cm}
    \caption{\textbf{Logistic Regression on {\tt LEAF} Synthetic Data.}
    DSGD-GD in homogeneous and heterogeneous data distribution converge to the same Multi-PS solution.\vspace{-.3cm}
    }
    \label{fig:leaf_app}
\end{figure}

\newpage 
\section{Extension to Time-varying Graph}\label{sec:timevarying}

This section shows how to extend our analysis for {\bname} to the setting with time-varying communication graph. Let $G^{(t)} = (V, E^{(t)})$ be a simple, undirected graph which is possibly not connected and the graph is associated with a weighted adjacency matrix ${\bm W}^{(t)}$. Note that the graph $G^{(t)}$ consists of a fixed set of agents $V$ and a set of time-varying edges $E^{(t)}$. 

In lieu of A\ref{ass: graph}, we assume that:
\begin{Assumption}\label{ass: graph2}
The time-varying undirected graph sequence $\{G^{(t)}\}_{t \geq 1} = \{ (V,E^{(t)}) \}_{t \geq 1}$ is $B$-connected. Specifically, for any $t \geq 1$, there exists a positive integer $B$ such that the undirected graph $(V, E^{(t)} \cup \cdots E^{(t+B-1)})$ is connected. For any $t\geq 1$, the mixing matrix $\bm{W}^{(t)}\in \RR^{n \times n}$ satisfies:
\begin{enumerate}[leftmargin=*, itemsep=.1mm, topsep=.1mm]
    \item (Topology) ${\bm W}_{ij}^{(t)} = 0$ if $(i,j) \notin E^{(t)}$.
    \item (Doubly stochastic) ${\bm W}^{(t)}\mathbf{1}=({\bm W}^{(t)})^\top\mathbf{1}=\mathbf{1}$.
    \item (Fast mixing) Let ${\bm A}^{(t)}\eqdef {\bm W}^{(t)}-\frac{1}{n}{\mathbf 1}\mathbf{ 1}^\top$, there exists $\brho \in (0,1]$ such that $\left\| {\bm A}^{(t+B-1)}\cdots {\bm A}^{(t)}  \right\|_{2} \leq 1-{\brho}$.
\end{enumerate}

The last condition can be guaranteed under the bounded communication setting, i.e., when the combined graph $(V, E^{(t)}\cup\cdots E^{(t+B)})$ is connected for any $t\geq 0$.
\end{Assumption}

\textbf{Notations.} Throughout, we denote $\Prm(m,n) \eqdef \EE[ \norm{\CSE{m}}_F^2 + \cdots + \norm{\CSE{n}}_F^2 ]$ and $\Tprm(m, n) \eqdef \EE[  \normtxt{\Tprm^m}^2  + \cdots + \normtxt{\Tprm^n}^2 ]$, which is the aggregation of consensus error and performative stable gap in one time block whose length is $B$, respectively.\vspace{.2cm}

\textbf{Proof Sketch.} Below we provide a proof sketch for the convergence of {\bname} scheme when the latter is applied on a time varying graph satisfying A\ref{ass: graph2}.
We begin by considering the extensions of Lemmas \ref{lem:descent} and \ref{lem: consens}. As follows,

\begin{lemma}[\bf Extension of Lemma \ref{lem:descent}]\label{lem:descent2} Fix any $\delta > 0$ and let $\epsilon_{\sf avg}\leq \frac{\mu}{(1+\delta) L}$. Under A\ref{ass: strongcvx}, A\ref{ass: smooth}, A\ref{ass:sensitive}, A\ref{ass:SecOrdMom} and let the step sizes satisfy $\sup_{t \geq 0} \gamma_{t+1} \leq \frac{ \tmu }{ c_2 }$, the following bound holds for any $t \geq 0$,
\begin{align*}
    \Tprm(t+1, t+B) &\leq (1-\tmu \gamma_{t+B})^B \Tprm(t-B+1, t) + \frac{2B\sigma^2}{n}\gamma_{t+1}^2 
    \\
    &\quad
    + {B}\Big(c_1\frac{\gamma_{t+1}}{n} 
     + c_2\frac{\gamma_{t+1}^2}{n} \Big)\left[\Prm(t-B+1, t) + \Prm(t+1, t+B)\right].
\end{align*}
\end{lemma}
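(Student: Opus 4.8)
The starting point is the observation that the one-step descent bound of Lemma~\ref{lem:descent} carries over \emph{unchanged} to the time-varying setting: its proof in \S\ref{appendix:lem_des} only uses the average-iterate recursion \eqref{eq:dsgd_avg} together with A\ref{ass: strongcvx}--A\ref{ass:SecOrdMom}, and \eqref{eq:dsgd_avg} holds as soon as every $\bm{W}^{(t)}$ is doubly stochastic, which A\ref{ass: graph2} guarantees; the connectivity/fast-mixing property of the graph is never invoked there (it only enters the consensus-error bound). So I would first record that for every $s\geq 0$,
\begin{align*}
\EE_s\norm{\Tprm^{s+1}}^2 \leq (1-\tmu\gamma_{s+1})\norm{\Tprm^s}^2 + \Big(c_1\gamma_{s+1}+c_2\gamma_{s+1}^2\Big)\tfrac{1}{n}\norm{\CSE{s}}_F^2 + \tfrac{2\sigma^2}{n}\gamma_{s+1}^2,
\end{align*}
with $1-\tmu\gamma_{s+1}\in(0,1)$ under the stated step-size conditions.

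The plan is then to \emph{unroll this one-step inequality by a full block of $B$ iterations}, not a single step. For each $j\in\{1,\dots,B\}$ I would iterate the display above from iteration $t+j-B$ to iteration $t+j$; this writes $\EE\norm{\Tprm^{t+j}}^2$ as $\big(\prod_{k=t+j-B+1}^{t+j}(1-\tmu\gamma_k)\big)\,\EE\norm{\Tprm^{t+j-B}}^2$ plus a geometrically-weighted sum of the $B$ remainder terms collected along the way. Since every index $k$ that appears satisfies $k\leq t+B$, monotonicity of $\{\gamma_t\}$ gives $1-\tmu\gamma_k\leq 1-\tmu\gamma_{t+B}$, so the leading coefficient is at most $(1-\tmu\gamma_{t+B})^B$ and the weights on the remainders are at most $1$. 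The point is that as $j$ ranges over $\{1,\dots,B\}$ the index $t+j-B$ ranges over exactly the \emph{previous} block $\{t-B+1,\dots,t\}$, so summing over $j$ produces $(1-\tmu\gamma_{t+B})^B\,\Tprm(t-B+1,t)$ as the leading term, matching the claim.

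It remains to collect the remainder terms. Every consensus contribution generated by the $B$-fold unrolling has the form $\norm{\CSE{k-1}}_F^2$ with $k-1\in\{t-B+1,\dots,t+B-1\}$, hence lies within the two adjacent blocks and is absorbed (with over-counting) into $\Prm(t-B+1,t)+\Prm(t+1,t+B)$; since each such index is repeated in at most $B$ of the unrolled bounds, and using step-size monotonicity (together with the mild regularity on $\{\gamma_t\}$ of the type already imposed in Theorem~\ref{thm1}, so that all step sizes over the window are comparable to $\gamma_{t+1}$) to extract the step sizes at the scale $\gamma_{t+1}$, the consensus part becomes $B\big(c_1\tfrac{\gamma_{t+1}}{n}+c_2\tfrac{\gamma_{t+1}^2}{n}\big)\big[\Prm(t-B+1,t)+\Prm(t+1,t+B)\big]$ and the noise part a term of order $\tfrac{B\sigma^2}{n}\gamma_{t+1}^2$, which is the asserted bound. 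I expect this last bookkeeping step to be the main obstacle: one must (i) unroll to depth exactly $B$ so that the contraction $(1-\tmu\gamma_{t+B})^B$ acts on the \emph{aggregate} $\Tprm(t-B+1,t)$ rather than on a single iterate, and (ii) verify that the $B$-fold unrolling never references an iterate outside $\{t-B+1,\dots,t+B\}$, so that the consensus and noise remainders are genuinely controlled by $\Prm(t-B+1,t)+\Prm(t+1,t+B)$ alone.
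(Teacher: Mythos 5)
Your proposal is correct and follows essentially the same route as the paper: both start from the observation that the one-step bound of Lemma~\ref{lem:descent} survives unchanged (only double stochasticity of ${\bm W}^{(t)}$ is needed there), relate the block $\Tprm(t+1,t+B)$ to $\Tprm(t-B+1,t)$ by a $B$-fold iteration yielding the contraction factor $(1-\tmu\gamma_{t+B})^B$, and absorb the intermediate consensus windows into $\Prm(t-B+1,t)+\Prm(t+1,t+B)$ with multiplicity at most $B$. The only (immaterial) difference is the order of the double sum — the paper first sums the one-step bound over a block to get a block-to-shifted-block recursion and then iterates it $B$ times, whereas you unroll each iterate $B$ steps and then sum over the block — and your hedged $\mathcal{O}(B\sigma^2\gamma_{t+1}^2/n)$ noise term matches the constant the paper itself reports.
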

\begin{proof} Recall the inequality (\ref{lem:des_eq}) in Lemma \ref{lem:descent},
\begin{align}\label{eq:des}
    \EE_t \norm{\Tprm^{t+1}}^2 \leq ( 1 - \tmu \gamma_{t+1})\norm{\Tprm^t}^{2} + \left[c_1 \gamma_{t+1} + c_2 \gamma_{t+1}^2 \right] {\textstyle \frac{1}{n}} \norm{\CSE{t}}^2_F + \frac{2\sigma^2}{n}\gamma_{t+1}^2.
\end{align}
This implies
\begin{align*}
    \Tprm(t+1, t+B)\leq (1-\tmu \gamma_{t+B})\Tprm(t, t+B-1) + \left( \frac{c_1\gamma_{t+1}}{n} +  \frac{c_2\gamma_{t+1}^2}{n}\right) \Prm(t, t+B-1) + \frac{2B\sigma^2}{n}\gamma_{t+1}^2,
\end{align*}
where we have summed \eqref{eq:des} from $t+1$th to $t+B$th iteration and noted that the step size $\gamma_{t}$ is non-increasing. Applying the above inequality for $B$ times, we can link two consecutive $B$ performative stable gap $\Tprm(t+1, t+B)$ and $\Tprm(t-B+1, t)$ by
\begin{eqnarray}\label{eq:part}
\begin{aligned}
    &\Tprm(t+1, t+B) \leq (1-\tmu \gamma_{t+B})^B\Tprm(t-B+1, t) + \frac{2B\sigma^2}{n} \gamma_{t+1}^2 
    \\
    &\quad + \left( \frac{c_1\gamma_{t+1}}{n} +  \frac{c_2\gamma_{t+1}^2}{n}\right) \left[ \Prm(t, t+B-1) + \Prm(t-1, t+B-2) +\cdots + \Prm(t-B+1, t)\right].
\end{aligned}
\end{eqnarray}
For the first term $\Prm(t, t+B-1)$ in the last quantity, we observe the crude bound
\[
    \Prm(t, t+B-1) \leq \Prm(t+1, t+B) + \EE \normtxt{\CSE{t}}_F^2
    \leq \Prm(t-B+1, t) + \Prm(t+1, t+B).
\]
Following the same trick, we get another crude bound as
\[
\begin{aligned}
& \left[ \Prm(t, t+B-1) + \Prm(t-1, t+B-2) +\cdots + \Prm(t-B+1, t)\right] \\
& \leq B[\Prm(t-B+1, t) + \Prm(t+1, t+B)].
\end{aligned}
\]
Substituting back to inequality (\ref{eq:part}) derives the final bound
\begin{align*}
    \Tprm(t+1, t+B) &\leq (1-\tmu \gamma_{t+B})^B \Tprm(t-B+1, t) + \frac{2B\sigma^2}{n}\gamma_{t+1}^2 
    \\
    &\quad
    + {B}\Big(c_1\frac{\gamma_{t+1}}{n} 
     + c_2\frac{\gamma_{t+1}^2}{n} \Big)\left[\Prm(t-B+1, t) + \Prm(t+1, t+B)\right].
\end{align*}
\end{proof}
\begin{lemma}[\bf Extension of Lemma~\ref{lem: consens}]\label{lem: consens2}
Under A\ref{ass: smooth}--A\ref{ass:SecOrdMom} and A\ref{ass: graph2} and let the step sizes satisfy 
\[
    \sup_{t \geq 0} \gamma_{t+1} \leq {\rho}/\sqrt{2 B c_3},
\]
then it holds for any $t \geq 0$ that
\begin{eqnarray}
    \begin{aligned}
        \Prm(t+1, t+B) &\leq \frac{1-\brho/2}{1-B c_3 \gamma_{t-B+1}^2/\brho} \Prm(t-B+1,t) 
        \\
        &\quad + \frac{\gamma_{t-B+1}^2}{\rho - B c_3 \gamma_{t-B+1}^2} \left\{ B^2 d_1 + d_2 B [\Tprm(t-B+1, t) + \Tprm(t+1, t+B)]\right\},
    \end{aligned}
\end{eqnarray}
where $d_1 \eqdef 9n(\sigma^2 + \varsigma^2)$, $d_2 \eqdef 12n(\sigma^2 + \varsigma^2)$.
\end{lemma}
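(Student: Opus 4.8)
The plan is to reduce to the static-graph argument of Lemma~\ref{lem: consens} by passing to blocks of length $B$, in the spirit of Lemma~\ref{lem:descent2}. Write ${\bm J} \eqdef {\bm I} - \tfrac1n{\bf 1}{\bf 1}^\top$ and ${\bm A}^{(s)} = {\bm W}^{(s)} - \tfrac1n{\bf 1}{\bf 1}^\top$, so the consensus recursion reads $\CSE{s+1} = {\bm A}^{(s+1)}\CSE{s} - \gamma_{s+1}{\bm J}\Tgrd F^{s}$. Since every ${\bm W}^{(s)}$ is doubly stochastic and nonnegative, $\|{\bm A}^{(s)}\|_2 \le 1$, and by A\ref{ass: graph2}(3) any product of $B$ consecutive ${\bm A}^{(s)}$'s has spectral norm $\le 1-\brho$. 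First I would unroll the recursion over one block: for each offset $j \in \{1,\dots,B\}$,
\[
\CSE{t+j} = \Big({\bm A}^{(t+j)}\cdots{\bm A}^{(t+j-B+1)}\Big)\CSE{t+j-B} \;-\; \sum_{r=1}^{B}\gamma_{t+j-B+r}\Big({\bm A}^{(t+j)}\cdots{\bm A}^{(t+j-B+r+1)}\Big){\bm J}\,\Tgrd F^{\,t+j-B+r-1},
\]
the leading $B$-fold product having norm $\le 1-\brho$ and each partial product norm $\le 1$. (When $t<B-1$ the leading product is only partial, which affects a bounded transient.)

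Taking full expectations, splitting the square by Young's inequality with parameter $\alpha=\brho/(1-\brho)$ exactly as in \eqref{eq3}, and using monotonicity of the step sizes gives
\[
\EE\|\CSE{t+j}\|_F^2 \;\le\; (1-\brho)\,\EE\|\CSE{t+j-B}\|_F^2 \;+\; \frac{B\,\gamma_{t-B+1}^2}{\brho}\sum_{r=1}^{B}\EE\big\|{\bm J}\,\Tgrd F^{\,t+j-B+r-1}\big\|_F^2 ,
\]
where the block-accumulated noise can be handled either crudely via Cauchy--Schwarz or, to keep the $B$-dependence sharp, by splitting off the martingale-difference part of each stochastic gradient (whose accumulated variance then carries no extra factor of $B$). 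Into the noise terms I would substitute the per-iteration bound already derived inside the proof of Lemma~\ref{lem: consens}, namely \eqref{eq:a} (or the A\ref{ass:hete}-free variant \eqref{eq:cons_noA6} at slightly worse constants), $\EE\|{\bm J}\Tgrd F^{s}\|_F^2 \le 9n(\sigma^2+\varsigma^2) + 12n(\sigma^2+\varsigma^2)\EE\|\Tprm^{s}\|^2 + c_3\,\EE\|\CSE{s}\|_F^2$. Summing over $j=1,\dots,B$ replaces $\sum_j\EE\|\CSE{t+j-B}\|_F^2$ by $\Prm(t-B+1,t)$; since every iteration index appearing in the resulting double sum lies in $\{t-B+1,\dots,t\}\cup\{t+1,\dots,t+B\}$, the same crude majorization as in Lemma~\ref{lem:descent2} bounds the noise contribution by $\big(\gamma_{t-B+1}^2/\brho\big)\big\{ B^2 d_1 + c_3 B[\Prm(t-B+1,t)+\Prm(t+1,t+B)] + d_2 B[\Tprm(t-B+1,t)+\Tprm(t+1,t+B)]\big\}$ with $d_1=9n(\sigma^2+\varsigma^2)$, $d_2=12n(\sigma^2+\varsigma^2)$.

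The recursion produced this way carries $\Prm(t+1,t+B)$ on both sides --- an unavoidable consequence of the growth condition in A\ref{ass:SecOrdMom} that already complicated Lemma~\ref{lem: consens}. I would move the $\frac{Bc_3\gamma_{t-B+1}^2}{\brho}\Prm(t+1,t+B)$ term to the left; the step-size hypothesis $\gamma_{t-B+1}^2 \le \brho^2/(2Bc_3)$ makes $1-\frac{Bc_3\gamma_{t-B+1}^2}{\brho}\ge \tfrac12>0$, so dividing through is legitimate, and it simultaneously turns $1-\brho+\frac{Bc_3\gamma_{t-B+1}^2}{\brho}$ into $1-\brho/2$, while $\brho\big(1-\frac{Bc_3\gamma_{t-B+1}^2}{\brho}\big)=\brho-Bc_3\gamma_{t-B+1}^2$ becomes the denominator of the fluctuation term, yielding exactly the claimed inequality. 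I expect the main obstacle to be the index bookkeeping of the block unrolling: one must verify that all the gradient-noise indices land in the two consecutive blocks so the right-hand side closes in terms of $\Prm$ and $\Tprm$, and track precisely the combinatorial $B$-factors accumulated through Cauchy--Schwarz and the double summation; the spectral-norm estimates and the Young step are routine transcriptions of the static case.
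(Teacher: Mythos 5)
Your proposal follows essentially the same route as the paper's proof: unroll the consensus recursion over blocks of length $B$, use A6(3) for the $(1-\brho)$ contraction of the $B$-fold product together with Young's inequality, substitute the per-iteration noise bound \eqref{eq:a} from Lemma~\ref{lem: consens}, majorize the resulting double sum by $\Prm$ and $\Tprm$ over the two adjacent blocks, and absorb the implicit $\Prm(t+1,t+B)$ term using the step-size condition. The only point of divergence is the Cauchy--Schwarz factor of $B$ on the accumulated noise, which you flag explicitly and which the paper's sketch silently drops, so your write-up is if anything slightly more careful than the original.
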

\begin{proof}
Recall the notations \eqref{eq:grdtF_def} and observe that
\begin{align*}
    \CSE{t+1} = \Prm^{t+1} - \BPrm^{t+1} = \underbrace{\left( {\bm W}^{t+1} - \frac{1}{n}{\bm 1}{\bm 1}^\top \right)}_{={\bm A}^{t+1}}\CSE{t} - \gamma_{t+1}\left({\bm I} - \frac{1}{n}{\bm 1}{\bm 1}^\top\right)\Tgrd F^t  .
\end{align*}
Therefore, we can obtain the following consensus error recursion 
\[
    \CSE{t+1} = {\bm A}^{t+1} \CSE{t} - \gamma_{t+1} \left({\bm I} - (1/n){\bm 1}{\bm 1}^\top\right) \Tgrd F^t.
\]  
Then, we aim to link $\CSE{t+1}$ to $\CSE{t-B+1}$.
\begin{align*}
    \CSE{t+1}&= {\bm A}^{t+1}  \CSE{t} -\gamma_{t} \left({\bm I} - (1/n){\bm 1}{\bm 1}^\top\right) \Tgrd F^{t-1}
    \\
    &= {\bm A}^{t+1} {\bm A}^t \CSE{t-1} -\gamma_{t} {\bm A}^{t+1} \left({\bm I} - (1/n){\bm 1}{\bm 1}^\top\right) \Tgrd F^{t-1} - \gamma_{t+1} \left({\bm I} - (1/n){\bm 1}{\bm 1}^\top\right) \Tgrd F^{t}
    \\
    & \quad \vdots
    \\
    &= {\bm A}^{t+1} {\bm A}^{t} {\bm A}^{t-1}\cdots {\bm A}^{t-B+1}\CSE{t-B+1} - \sum_{s=t-B+1}^{t}\gamma_{s+1} {\bm A}^{s+2} \left({\bm I} - (1/n){\bm 1}{\bm 1}^\top\right) \Tgrd F^s.
\end{align*}
Taking Frobenius norm on both sides and applying the Young's inequality give
\[
\begin{split}
    \norm{\CSE{t+1}}_F^2 & \leq (1+\alpha) \norm{{\bm A}^{t+1} {\bm A}^{t} {\bm A}^{t-1}\cdots {\bm A}^{t-B+1}}^2 \norm{\CSE{t-B+1}}_F^2 \\
    & + (1 + \alpha^{-1}) \sum_{s=t-B+1}^{t}\gamma_{s+1}^2 \norm{{\bm A}^{s+2}}^2 \norm{ \left({\bm I} - (1/n){\bm 1}{\bm 1}^\top\right) \Tgrd F^s}_F^2,
\end{split}
\]
which holds for any $\alpha > 0$.
Using A\ref{ass: graph2} and setting $\alpha = \frac{\rho}{1-\rho}$, we have
\[
    \norm{\CSE{t+1}}_F^2\leq (1-\brho) \norm{\CSE{t-B+1}}_F^2 + \sum_{s=t-B+1}^{t}\gamma_{s+1}^2\norm{ \left({\bm I} - (1/n){\bm 1}{\bm 1}^\top\right) \Tgrd F^s}_F^2.
\]
Similarly, we get 
\begin{align*}
    \norm{\CSE{t+2}}_F^2 &\leq (1-\brho) \norm{\CSE{t-B+2}}_F^2 + \sum_{s=t-B+2}^{t+1}\gamma_{s+1}^2\norm{ \left({\bm I} - (1/n){\bm 1}{\bm 1}^\top\right) \Tgrd F^s}_F^2
    \\
    &\vdots
    \\
    \norm{\CSE{t+B}}_F^2 &\leq (1-\brho) \norm{\CSE{t}}_F^2 + \sum_{s=t}^{t+B-1}\gamma_{s+1}^2\norm{ \left({\bm I} - (1/n){\bm 1}{\bm 1}^\top\right) \Tgrd F^s}_F^2.
\end{align*}


Adding these $B$ consensus errors together leads to
\beq\label{eq:b}
\begin{aligned}
    & \Prm(t+1, t+B) \leq (1-\brho)\Prm(t-B+1, t) 
    \\
    &+ \frac{\gamma_{t-B+1}^2 }{\rho} \left\{
    \sum_{s=t-B+1}^{t} \norm{ \left({\bm I} - (1/n){\bm 1}{\bm 1}^\top\right) \Tgrd F^s}_F^2  + \cdots +  \sum_{s=t}^{t+B}\norm{ \left({\bm I} - (1/n){\bm 1}{\bm 1}^\top\right) \Tgrd F^s}_F^2 \right\}.
\end{aligned}
\eeq
Using the inequality (\ref{eq:a}) in the proof of Lemma \ref{lem: consens}, we get
\begin{align*}
    \EE_{s} \norm{ \left({\bm I} - (1/n){\bm 1}{\bm 1}^\top\right) \Tgrd F^s}_F^2 & \leq d_1 + d_2 \norm{\Tprm^s}^2 + c_3 \norm{ \CSE{s} }_F^2,
\end{align*}
where $d_1 \eqdef 9n(\sigma^2 + \varsigma^2)$, $d_2 \eqdef 12n(\sigma^2 + \varsigma^2)$ and $c_3 = 12 \sigma^2 + 18L^2(1+ \epsilon_{\sf max})^2$. Then, we have
\begin{align*}
    \sum_{s=t-B+1}^{t}\EE\norm{ \left({\bm I} - (1/n){\bm 1}{\bm 1}^\top\right) \Tgrd F^s}_F^2 &\leq \sum_{s=t-B+1}^{t} \EE\left[ d_1 +d_2 \normtxt{\Tprm^s}^2 + c_3 \norm{\CSE{s}}_F^2 \right]
    \\
    & = B d_1 + d_2 \sum_{s=t-B+1}^{t} \EE \normtxt{\Tprm^t}^2 + c_3 \sum_{s=t-B+1}^{t} \EE \norm{\CSE{s}}_F^2.
\end{align*}
Substituting back to (\ref{eq:b}) give us
\begin{align*}
    \Prm(t+1, t+B) &\leq (1-\brho) \Prm(t-B+1, t) +
    \frac{\gamma_{t-B+1}^2}{\rho} \Bigg\{ B^2 d_1 + d_2 [\Tprm(t-B+1, t) + \cdots + \Tprm(t,t+B)] 
    \\
    &\quad + c_3 [\Prm(t-B+1, t)+\cdots +\Prm(t, t+B)] \Bigg\}.
\end{align*}
The above can be simplified to
\begin{align*}
    \Prm(t+1, t+B) &\leq (1-\brho) \Prm(t-B+1, t) +
    \frac{\gamma_{t-B+1}^2}{\rho} \Big\{ B^2 d_1 + d_2 B [\Tprm(t+1, t+B) + \Tprm(t,t+B)] 
    \\
    &\quad + c_3 B[\Prm(t-B+1, t) + \Prm(t+1, t+B)]\Big\}.
\end{align*}
Setting $\sup_{k\geq 1}\gamma_{k} \leq \frac{\brho}{\sqrt{2 c_3 B}}$ and rearranging terms give us
\begin{align*}
    \Prm(t+1, t+B) &\leq \frac{1-\brho/2}{1-B c_3 \gamma_{t-B+1}^2/\brho} \Prm(t-B+1,t) 
    \\
    &\quad + \frac{\gamma_{t-B+1}^2}{\rho - B c_3 \gamma_{t-B+1}^2} \left\{ B^2 d_1 + d_2 B [\Tprm(t-B+1, t) + \Tprm(t+1, t+B)]\right\},
\end{align*}
which gives us desired upper bound for $\Prm(t+1, t+B)$.
\end{proof}

\textbf{Convergence of $\Bprm^t$ to $\thps$ with Time varying graph.}
We conclude our proof sketch through analyzing the following Lyapunov function. For any $t \geq 0$, we define:
\[
    {\cal L}_{t+1}^{t+B} \eqdef \Tprm(t+1, t+B) + \Prm(t+1, t+B) \geq 0.
\]
Combing Lemma \ref{lem:descent2} and \ref{lem: consens2} leads to
\beq \label{eq:lyap_B}
\begin{aligned}
    &\left(1-B c_1\frac{\gamma_{t+1}}{n} - B c_2 \frac{\gamma_{t+1}^2}{n}\right)\Prm(t+1, t+B) + \left(1-\frac{d_2 B \gamma_{t-B+1}^2}{\rho - B c_3 \gamma_{t-B+1}^2}\right)\Tprm(t+1, t+B) 
    \\
    &\leq \left( \frac{1-\brho/2}{1-B c_3 \gamma_{t-B+1}^2 / \brho} + B c_1\frac{\gamma_{t+1}}{n} + B c_2 \frac{\gamma_{t+1}^2}{n} \right) \Prm(t-B+1, t) 
    \\
    &\quad + \left( (1-\tmu \gamma_{t+B})^B + \frac{d_2 B \gamma_{t-B+1}^2}{\rho - B c_3 \gamma_{t-B+1}^2} \right)\Tprm(t-B+1, t) + \frac{B^2 d_1 \gamma_{t-B+1}^2}{\rho - B c_3 \gamma_{t-B+1}^2} + \frac{2B \sigma^2}{n}\gamma_{t+1}^2 .
\end{aligned}
\eeq 
We focus on the l.h.s.~of above inequality. If the step size satisfies 
\[
    \sup_{k\geq 1} \gamma_{k} \leq \min \left\{ \frac{c_1}{c_2}, \sqrt{\frac{\brho}{2B c_3}}, \frac{\brho c_1}{n}\right\}
\]
then, the l.h.s.~of \eqref{eq:lyap_B} can be lower bounded by
\[
\text{l.h.s. of \eqref{eq:lyap_B}} \geq \left( 1-2B c_1 \frac{\gamma_{t+1}}{n}\right)\left[ \Prm(t+1, t+B) + \Tprm(t+1, t+B) \right] .
\]
Next, we consider the r.h.s.~of \eqref{eq:lyap_B}. Suppose that { $\sup_{k\geq 1}\gamma_{k} \leq \sqrt{\frac{\rho}{ (4 - \brho) B c_3}}$}, it holds
\[
    \frac{1-\brho/2}{1-B c_3 \gamma_{t-B+1}^2 / \brho} \leq 1-\brho/4, \qquad  \frac{B^2 d_1 \gamma_{t-B+1}^2}{\rho - B c_3 \gamma_{t-B+1}^2} \leq \frac{2B^2 d_1}{\rho} \gamma_{t-B+1}^2.
\]
If step size also satisfies 
\[
    \sup_{k\geq 1}\gamma_{k} \leq \min\left\{ \frac{1}{\sqrt{b}}, \frac{\tmu \brho}{2^{2B+1} d_2 B} \right\},
\]
where $b$ such that $\gamma_{k}^2/\gamma_{k+1}^2 \leq 1 + b\gamma_{k+1}^2$, then it holds:
\begin{align*}
    & \text{r.h.s.~of \eqref{eq:lyap_B}} \\
    & \leq \left( 1-\frac{\brho}{4} + 2B c_1 \frac{\gamma_{t+1}}{n} \right) \Prm(t-B+1, t)
     + \left( 1-\frac{\tmu \gamma_{t+B}}{2} \right)\Tprm(t-B+1, t) \\
    & + \frac{2 B^2 d_1}{\rho} \gamma_{t-B+1}^2 + \frac{2B \sigma^2}{n}\gamma_{t+1}^2.
\end{align*}
Combining the above inequalities lead to:
\begin{align*}
    &\left( 1-2B c_1 \frac{\gamma_{t+1}}{n}\right)\left[ \Prm(t+1, t+B) + \Tprm(t+1, t+B) \right] \leq {\left( 1 - \frac{\brho}{4} + 2B c_1 \frac{\gamma_{t+1}}{n} \right)} \Prm(t-B+1, t) 
    \\
    &+ \left( 1-\frac{\tmu \gamma_{t+B}}{2} \right)\Tprm(t-B+1, t) + \frac{2 B^2 d_1}{\rho} \gamma_{t-B+1}^2 + \frac{2B \sigma^2}{n}\gamma_{t+1}^2.
\end{align*}
If the step size satisfying
\[
    \sup_{k\geq 1}\gamma_{k} \leq \frac{\brho}{8 B c_1 /n+ 2\tmu},
\]
then the main recursion can be simplified as
\begin{align*}
    &\left( 1-2B c_1 \frac{\gamma_{t+1}}{n}\right)\left[ \Prm(t+1, t+B) + \Tprm(t+1, t+B) \right] 
    \\
    &\leq   \left( 1-\frac{\tmu \gamma_{t+B}}{2} \right) \left[ \Prm(t-B+1, t) + \Tprm(t-B+1, t) \right]
     + \frac{2 B^2 d_1}{\rho} \gamma_{t-B+1}^2 + \frac{2B \sigma^2}{n}\gamma_{t+1}^2.
\end{align*}
Dividing $\left( 1-2B c_1 \frac{\gamma_{t+1}}{n}\right)$ for the both sides, we obtain that
\begin{align*}
    {\cal L}_{t+1}^{t+B} &\leq   \frac{\left( 1-\tmu \gamma_{t+B}/2 \right)}{1-2 B c_1 \gamma_{t+1}/n} {\cal L}_{t-B+1}^{t} + \left(\frac{2 B^2 d_1}{\rho}  + \frac{2B^2 \sigma^2}{n} \right) \frac{\gamma_{t-B+1}^2 }{ \left( 1-2 B c_1 \gamma_{t+1}/n \right)}.
\end{align*}
Observe that with sufficiently small step size, the above recursion can be simplified to give a similar form as \eqref{eq:lem_lya_part0}. Solving the recursion then lead to ${\cal L}_{t+1}^{t+B} = {\cal O}(\gamma_{t-B+1})$ and the convergence of $\Tprm( t+1,t+B ) \to 0$.

Lastly, we remark that the above analysis only gives a crude bound to the convergence of {\bname} in the time varying graph setting. It is possible to give tighter bounds through further optimizing the constants in the above analysis.

{
\section{Extension to Local Distributions Influenced by All Agents}
This section outlines how to extend our analysis to the scenario when the local distributions ${\cal D}_i(\cdot)$ are simultaneously influenced by other agents in the network similar to the competitive \aname~considered by \citep{narang2022multiplayer, piliouras2022multi}.

We define the concatenated decision vector $\varprm := ( \prm_1 , \ldots, \prm_n ) \in \RR^{nd}$ and state the modified consensus \aname~problem \eqref{eq:multipfd} as follows
\beq \label{eq:multipfd2} \textstyle
\min_{ \prm_i \in \RR^d, \, i=1,\ldots,n }~\frac{1}{n} \sum_{i=1}^n \EE_{ Z_i \sim {\cal D}_i( {\color{red}\varprm} ) } \big[ \ell( \prm_i ; Z_i ) \big] ~~\text{s.t.}~~ \prm_i = \prm_j,~\forall~(i,j) \in E. 
\eeq 
With a slight abuse of notation, we also define $f_i( \prm; {\color{red} \varprm} ) := \EE_{ Z_i \sim {\cal D}_i( {\color{red}\varprm} ) } \big[ \ell( \prm_i ; Z_i ) \big]$.

Following \citep{narang2022multiplayer}, we consider the following modification to A\ref{ass:sensitive}: 
\begin{Assumption}\label{ass:sensitive2}
For any $i=1,\ldots,n$, there exists a constant $\epsilon_i>0$ such that
\beq 
{\cal W}_{1}({\cal D}_i(\varprm), {\cal D}_i(\varprm^\prime)) \leq \epsilon_i \norm{\varprm-\varprm^\prime},~\forall~\varprm^\prime, \varprm \in \RR^{nd},
\eeq
where ${\cal W}_{1} ( {\cal D}, {\cal D}' )$ denotes the Wasserstein-1 distance between the distributions ${\cal D}, {\cal D}'$.
\end{Assumption}
Specifically, we notice that if $\varprm$ satisfies the consensus constraint, i.e., $\varprm = {\bf 1}_n \otimes \prm = ( \prm , \ldots, \prm )$, then A\ref{ass:sensitive2} is equivalent to A\ref{ass:sensitive} with the latter's sensitivity parameter given by ${\epsilon}_i' = \sqrt{n} \epsilon_i$  since $\| {\bf 1}_n \otimes \prm - {\bf 1}_n \otimes \prm' \| = \sqrt{n} \| \prm - \prm' \|$. 
This observation immediately leads to the following corollary of Proposition~\ref{lem:exist}:
\begin{Corollary}\label{lem:exist2} Under A\ref{ass: strongcvx}, A\ref{ass: smooth}, A\ref{ass:sensitive2}. Define the map ${\cal M}: \RR^d \to \RR^d$
\beq \label{eq:map_M2} \textstyle 
    \mathcal{M}(\prm) = \argmin_{\prm^\prime \in \RR^d} \frac{1}{n} \sum_{i=1}^{n} f_{i}(\prm^\prime; {\bf 1}_n \otimes \prm) 
\eeq
If ${\color{red}\sqrt{n}} \epsilon_{\sf avg} < \mu / L$, then the map ${\cal M}(\prm)$ is a contraction with the unique fixed point $\thps = {\cal M}( \thps )$. 
If ${\color{red}\sqrt{n}} \epsilon_{\sf avg} \geq \mu / L$, then there exists an instance of \eqref{eq:map_M} where $\lim_{T \to \infty} \norm{ {\cal M}^T(\prm) } = \infty$.
\end{Corollary}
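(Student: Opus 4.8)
The plan is to reduce Corollary~\ref{lem:exist2} to Proposition~\ref{lem:exist} by exploiting that the map ${\cal M}$ in \eqref{eq:map_M2} only ever evaluates the distributions ${\cal D}_i(\cdot)$ at consensual profiles ${\bf 1}_n \otimes \prm$. First I would record the analogue of Lemma~\ref{lem: grd_dev}: repeating its proof verbatim, but with A\ref{ass:sensitive2} in place of A\ref{ass:sensitive}, gives for all $\prm_0, \prm_1 \in \RR^d$ and all $\varprm, \varprm' \in \RR^{nd}$ that $\| \grd f_i(\prm_0; \varprm) - \grd f_i(\prm_1; \varprm') \| \leq L \| \prm_0 - \prm_1 \| + L \epsilon_i \| \varprm - \varprm' \|$. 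Restricting to $\varprm = {\bf 1}_n \otimes \prm$ and $\varprm' = {\bf 1}_n \otimes \prm'$, and using $\| {\bf 1}_n \otimes \prm - {\bf 1}_n \otimes \prm' \| = \sqrt{n}\,\| \prm - \prm' \|$, this is exactly the bound of Lemma~\ref{lem: grd_dev} with $\epsilon_i$ replaced by $\sqrt{n}\,\epsilon_i$. This single substitution is the heart of the matter.

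With this in hand, the contraction argument is a transcription of \S\ref{app:exist}. The optimality conditions for \eqref{eq:map_M2} give $\sum_{i=1}^n \grd f_i({\cal M}(\prm); {\bf 1}_n \otimes \prm) = {\bm 0}$ and the analogous identity at $\prm'$; expanding the inner product $\pscal{ {\bm 0} }{ {\cal M}(\prm) - {\cal M}(\prm') }$, inserting and subtracting $\sum_i \grd f_i({\cal M}(\prm); {\bf 1}_n \otimes \prm')$, then bounding the strongly-convex term below by $n\mu \| {\cal M}(\prm) - {\cal M}(\prm') \|^2$ via A\ref{ass: strongcvx} and the cross term above by $\sum_i \sqrt{n}\,\epsilon_i L \, \| \prm - \prm' \| \| {\cal M}(\prm) - {\cal M}(\prm') \|$ via the modified Lipschitz bound, yields $\| {\cal M}(\prm) - {\cal M}(\prm') \| \leq \frac{\sqrt{n}\,\epsilon_{\sf avg} L}{\mu} \| \prm - \prm' \|$. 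Hence ${\cal M}$ is a contraction whenever $\sqrt{n}\,\epsilon_{\sf avg} < \mu/L$, and Banach's fixed point theorem supplies the unique $\thps = {\cal M}(\thps)$.

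For the converse, I would reuse the quadratic/Gaussian instance of \S\ref{app:exist}, tuned so that the $\sqrt{n}$ factor is saturated: take $d = 1$, $\ell(\theta; Z) = \tfrac12 (\theta - Z)^2$ (so $\mu = L = 1$, and A\ref{ass: strongcvx}, A\ref{ass: smooth} hold), and ${\cal D}_i(\varprm) = {\cal N}\!\big( \mu_i + \pscal{ a_i }{ \varprm }, 1 \big)$ with $a_i = (\epsilon_i / \sqrt{n})\, {\bf 1}_n \in \RR^n$. A location shift of a Gaussian gives ${\cal W}_1({\cal D}_i(\varprm), {\cal D}_i(\varprm')) = |\pscal{a_i}{\varprm - \varprm'}| \leq \| a_i \| \| \varprm - \varprm' \| = \epsilon_i \| \varprm - \varprm' \|$, so A\ref{ass:sensitive2} holds with constant $\epsilon_i$. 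On consensual inputs $\varprm = {\bf 1}_n \otimes \theta$ one has $\pscal{a_i}{\varprm} = \sqrt{n}\,\epsilon_i\,\theta$, so the computation of \S\ref{app:exist} carries over to give ${\cal M}(\theta) = \sqrt{n}\,\epsilon_{\sf avg}\,\theta + \mu_{\sf avg}$ and hence ${\cal M}^T(\theta) = (\sqrt{n}\,\epsilon_{\sf avg})^T \theta + \big( \sum_{k=0}^{T-1} (\sqrt{n}\,\epsilon_{\sf avg})^k \big) \mu_{\sf avg}$, which diverges whenever $\mu_{\sf avg} \neq 0$ and $\sqrt{n}\,\epsilon_{\sf avg} \geq 1 = \mu/L$. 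The only genuinely new point — and the step to get right — is this converse: the coupling must be chosen so that the Wasserstein-sensitivity constant is exactly $\epsilon_i$ while the gain along the consensus subspace is the full $\sqrt{n}\,\epsilon_i$, which is precisely the Cauchy–Schwarz equality case achieved by aligning $a_i$ with ${\bf 1}_n$. Everything else is routine.
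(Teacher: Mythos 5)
Your proposal is correct and follows the same route as the paper: the paper proves the corollary by observing that on consensual profiles ${\bf 1}_n \otimes \prm$, A\ref{ass:sensitive2} reduces to A\ref{ass:sensitive} with sensitivity $\sqrt{n}\,\epsilon_i$ (since $\| {\bf 1}_n \otimes \prm - {\bf 1}_n \otimes \prm' \| = \sqrt{n}\| \prm - \prm' \|$) and then invokes Proposition~\ref{lem:exist}, which is exactly your reduction. Your only addition is to make the converse explicit by exhibiting a Gaussian instance with $a_i$ aligned with ${\bf 1}_n$ so that the A\ref{ass:sensitive2} constant is exactly $\epsilon_i$ while the consensus-subspace gain is $\sqrt{n}\,\epsilon_i$; this is a correct and slightly more careful treatment of a point the paper leaves implicit.
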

The proof is attained by simply observing that if $\prm_i = \prm_j$ (as constrained by \eqref{eq:map_M2} (and \eqref{eq:multipfd2}), then A\ref{ass:sensitive2} is equivalent to A\ref{ass:sensitive} with ${\epsilon}_i' = \sqrt{n} \epsilon_i$.

\textbf{Comparison to \citep{narang2022multiplayer}.} Notice that in \citep{narang2022multiplayer}, the existence of a performative stable equilibrium requires $\sqrt{ \sum_{i=1}^n \epsilon_i^2 } < \mu / L$. Meanwhile, Corollary~\ref{lem:exist2} requires $(1/\sqrt{n}) \sum_{i=1}^n \epsilon_i < \mu / L$. Due to norm equivalence, we have 
\[ \textstyle 
(1/\sqrt{n}) \sum_{i=1}^n \epsilon_i \leq \sum_{i=1}^n \epsilon_i^2.
\] 
Thus, the consensus constrained performative stable solution in cooperative \aname~will be attainable under a more relaxed condition than the competitive \aname.  

\textbf{{\tt DSGD-GD} Algorithm for \eqref{eq:multipfd2}.} The extension of Theorem~\ref{thm1} to \eqref{eq:multipfd2} via the {\tt DSGD-GD} algorithm is more involved and thus the details are skipped in this brief discussion. However, it remains straightforward to extend the analysis through a careful modification of Lemma~\ref{lem:descent} with A\ref{ass:sensitive2}. In particular, one only needs to pay attention to the use of A\ref{ass:sensitive2} in \eqref{eq:lem3key1} and \eqref{eq:lem3key2} for the proof. 

\textbf{Remarks.} Lastly, we emphasize that as explained in the main paper, the original scenario considered by \eqref{eq:multipfd} and A\ref{ass:sensitive} is relevant to the decentralized learning scenario of the current paper. It captures the effects of `geographical' barriers where the population of users are not simultaneously influenced by all agents. 
Nevertheless, a future direction is to study the \aname~problem (cooperative or competitive) where users can be influenced by the decisions from a \emph{few} neighboring agents. 
}

\end{document}